\documentclass{article}
\usepackage[a4paper,margin=24mm]{geometry}
\usepackage{amsmath,amssymb,amsthm,mathtools,bm}
\usepackage{enumitem}
\usepackage{xcolor}
\usepackage{indentfirst}   
\usepackage{hyperref}
\newcommand{\email}[1]{\href{mailto:#1}{\nolinkurl{#1}}}
\hypersetup{
  colorlinks=false,
  linkbordercolor={1 0 0}, 
  citebordercolor={0 1 0}  
}
\newcommand{\dd}{\,\mathrm d}
\newcommand{\ee}{\mathrm e}
\newcommand{\csq}{c_{*}^{2}}

\newtheorem{theorem}{Theorem}[section]
\newtheorem{lemma}[theorem]{Lemma}
\newtheorem{proposition}[theorem]{Proposition}
\newtheorem{corollary}[theorem]{Corollary}
\newtheorem{remark}[theorem]{Remark}
\numberwithin{equation}{section}

\title{\textbf{Linear Stability and a Turning Point Principle for Barotropic Liquid Stars with a Free Boundary in General Relativity}}
\author{ZEMING HAO AND JIANCHAO XU}
\date{}

\begin{document}
\maketitle
\begin{abstract}
We study the linear stability of spherically symmetric, asymptotically flat steady states of the Einstein-Euler system describing a liquid star surrounded by vacuum. The equation of state has positive surface density, positive causal sound speed, and an asymptotically linear high-density limit. We construct the equilibrium family and derive the linearized equation with a Robin boundary condition. The linearized operator has discrete real spectrum, and its negative Morse index equals the number of growing modes. We
prove that the linearized operator around steady states with large central redshift admits a growing
mode, while such growing modes do not exist for steady states with small central redshift. A parameter derivative of the equilibrium family provides a regular interior zero solution of the linearized equation, whose boundary defect is a nonzero multiple of the parameter derivative of the total mass. This gives a turning point principle: the kernel is one-dimensional precisely at mass critical points, and the sign of the crossing eigenvalue is determined by the bending of the mass-radius curve.
\end{abstract}
\tableofcontents

\section{Introduction}

\subsection{General setup}

  The stability of a self-gravitating fluid body concerns the response of
a hydrostatic equilibrium to perturbations of both the matter and the
gravitational field. In general relativity these two components are
coupled. The fluid determines the spacetime curvature, while the metric
governs the fluid motion. The classical work of Tolman and of Oppenheimer
and Volkoff established the hydrostatic equations for spherical fluid
bodies \cite{Tolman1939,OppenheimerVolkoff1939}. A central question is
whether the associated equilibrium configurations support growing modes as their central density increases.

We consider an isolated, isentropic liquid body with a moving surface
and a positive density on the fluid side of that surface. We use geometrized units in which the gravitational constant and the
speed of light are equal to \(1\). The spacetime is a \(4\)-dimensional manifold
$\mathcal M$ endowed with a Lorentzian metric $g$ of signature
$(-1,+1,+1,+1)$.  The energy-momentum tensor of an ideal fluid is
\begin{equation}\label{eq:1-1}
 T_{\alpha\beta}=(\rho+p)u_\alpha u_\beta+p g_{\alpha\beta},
\end{equation}
where $\rho$ is the mass-energy density, $p$ is the pressure, and $u$ is a
future-directed unit timelike vector field satisfying
$g_{\alpha\beta}u^\alpha u^\beta=-1$. The geometry and matter are
coupled through the Einstein equations
\begin{equation}\label{eq:1-2}
 \bar{R}_{\alpha\beta}-\frac12g_{\alpha\beta}\bar{R}=8\pi T_{\alpha\beta},
\end{equation}
where \(\bar{R}_{\alpha\beta}\) and \(\bar{R}\) are respectively the Ricci curvature and scalar curvature of \(g\). The Bianchi identity applied to the field equations \eqref{eq:1-2} gives the conservation laws
\begin{equation}\label{eq:1-3}
 \nabla^\alpha T_{\alpha\beta}=0.
\end{equation}
For an isentropic fluid, the component of \eqref{eq:1-3} parallel to $u$ is
\begin{equation}\label{eq:1-4}
 u^\alpha\nabla_\alpha\rho+(\rho+p)\nabla_\alpha u^\alpha=0.
\end{equation}
The projection of \eqref{eq:1-3} orthogonal to $u$ is
\begin{equation}\label{eq:1-5}
 (\rho+p)u^\alpha\nabla_\alpha u_\beta
 +(g_{\alpha\beta}+u_\alpha u_\beta)\nabla^\alpha p=0.
\end{equation}
Assume the fluid is initially supported on a compact domain \(\mathcal{B}_{0}\subset\mathbb{R}^{3}\). The domain \(\mathcal{B}(t)\) occupied by fluid does not have a fixed shape and as the system evolves \(\mathcal{B}(t)\) changes in time. Therefore it leads to a free boundary problem subject to the following boundary conditions:
	\begin{align}\label{eq:1-7}
	p=0\quad \textrm{on}\quad \partial\mathcal{B}(t),\quad\quad
	u|_{\partial\mathcal{B}}\in  \,T(t,\partial\mathcal{B}(t)).
	\end{align}
	In this work we consider the equation of state $p=P(\rho)$, where
\begin{equation}\label{eq:1-8}
 \rho_0>0,\quad P\in C^2([\rho_0,\infty)),\quad P(\rho_0)=0,
 \quad 0<P'(\rho)\le1.
\end{equation}
We assume that there are $c_*^2\in(0,1]$, $C_P>0$ such that
\begin{align}\label{eq:1-9}
 |P(\rho)-c_*^2\rho|&\le C_P,
\end{align}
and there exist constants $\theta>0$, $C_1>0$ and $\rho_1\ge\rho_0$
such that
\begin{equation}\label{eq:1-10}
 \ |P'(\rho)-\csq|\le C_1\rho^{-\theta}
 \quad(\rho\ge\rho_1).\ 
\end{equation}
We also assume \(P''(\rho)\to 0\) as \(\rho\to \infty\).
Since $P$ is strictly increasing and $P(\rho_0)=0$, the condition $p=0$
at the surface gives $\rho=\rho_0$ on its interior side. Outside the body
we set $p=\rho=0$. Thus the density has a jump at the fluid--vacuum
interface, as in a liquid model. 

These assumptions are quite commonly used in the description of liquid stars in relativistic astrophysic. The fluid described by $P(\rho)=\rho-\rho_0$ is called hard phase fluid, which is an idealized model for the physical situation where, during the gravitational collapse of the degenerate core
of a massive star, the mass-energy density exceeds the nuclear saturation density. See \cite{jh,FriedmanPandharipande1981,Lichnerowicz1967,RezzollaZanotti2013,Walecka1974,Zeldovich1961}. The well-posedness of the free boundary problem \eqref{eq:1-1}-\eqref{eq:1-3} and \eqref{eq:1-7}-\eqref{eq:1-8} is proved in \cite{HaoHuoMiao2024} (see also \cite{h} in a fixed Minkowski spacetime and \cite{GinsbergLindblad2023,g} for similar models in a fixed
spacetime).

\subsection{Statement of the main result}

In this work we assume the underlying spacetime is spherically symmetric. In Schwarzschild coordinates
$(t,r,\theta,\varphi)$ the metric takes the form
\begin{equation}\label{eq:1-11}
 g=-\ee^{2\mu(t,r)}\dd t^2+\ee^{2\lambda(t,r)}\dd r^2
 +r^2(\dd\theta^2+\sin^2\theta\,\dd\varphi^2).
\end{equation}
The spacetime is asymptotically flat
\begin{equation}\label{eq:1-12}
 \lim_{r\to\infty}\lambda(t,r)=\lim_{r\to\infty}\mu(t,r)=0,
\end{equation}
and regularity at the center imposes
\begin{equation}\label{eq:1-13}
 \lambda(t,0)=0.
\end{equation}
Write the four velocity as
\[
 u^\alpha=(u^0,u,0,0),\qquad
 u^0=\ee^{-\mu}\sqrt{1+\ee^{2\lambda}u^2}
 =:\ee^{-\mu}\langle u\rangle.
\]
A direct computation gives the spherically symmetric Einstein--Euler
system
\begin{equation}\label{eq:1-14}
 \ee^{-2\lambda}(2r\lambda'-1)+1
 =8\pi r^2\bigl(\rho+\ee^{2\lambda}(\rho+p)u^2\bigr),
\end{equation}
\begin{equation}\label{eq:1-15}
 \ee^{-2\lambda}(2r\mu'+1)-1
 =8\pi r^2\bigl(p+\ee^{2\lambda}(\rho+p)u^2\bigr),
\end{equation}
\begin{equation}\label{eq:1-16}
 \dot\lambda=-4\pi r\ee^{\mu+2\lambda}\langle u\rangle u(\rho+p),
\end{equation}
\begin{equation}\label{eq:1-17}
 \ee^{-2\lambda}
 \left[\mu''+(\mu'-\lambda')\left(\mu'+\frac1r\right)\right]
 -\ee^{-2\mu}\bigl[\ddot\lambda+\dot\lambda(\dot\lambda-\dot\mu)\bigr]
 =8\pi p,
\end{equation}
\begin{align}\label{eq:1-18}
 0={}&\dot\rho+\ee^\mu\frac{u}{\langle u\rangle}\rho'
 +(\rho+p)\left[
 \dot\lambda
 +\ee^\mu\frac{u}{\langle u\rangle}
   \left(\lambda'+\mu'+\frac2r\right)
 +\ee^\mu\frac{u'}{\langle u\rangle}
 +\ee^{2\lambda}\frac{u}{\langle u\rangle}
   \frac{\dot\lambda u+\dot u}{\langle u\rangle}
 \right],
\end{align}
\begin{align}\label{eq:1-19}
 0={}&(\rho+p)\left[
 \ee^{2\lambda}(\dot u+2\dot\lambda u)
 +\ee^\mu\langle u\rangle\mu'
 +\ee^{\mu+2\lambda}\frac{u}{\langle u\rangle}(u'+\lambda'u)
 \right]
 +\ee^\mu\langle u\rangle p'+\ee^{2\lambda}u\dot p,
\end{align}
where \(\dot{}\) and \('\) denote the derivatives with respect to \(t\) and \(r\) respectively. For a steady state the spatial velocity vanishes. The static equations
are
\begin{equation}\label{eq:1-20}
 \ee^{-2\lambda}(2r\lambda'-1)+1=8\pi r^2\rho,
\end{equation}
\begin{equation}\label{eq:1-21}
 \ee^{-2\lambda}(2r\mu'+1)-1=8\pi r^2p,
\end{equation}
\begin{equation}\label{eq:1-22}
 \ee^{-2\lambda}
 \left[\mu''+(\mu'-\lambda')\left(\mu'+\frac1r\right)\right]=8\pi p,
\end{equation}
\begin{equation}\label{eq:1-23}
 (\rho+p)\mu'+p'=0.
\end{equation}
The construction in Section \ref{sec2} gives a unique equilibrium for every
central redshift $\kappa>0$. With the enthalpy function $Q$ defined by \eqref{eq:2-added-001}, the parameter is
\[
 \kappa=Q(\rho_\kappa(0))
 =\mu_\kappa(R_\kappa)-\mu_\kappa(0),
\]
where $R_\kappa$ is the radius at which the pressure first vanishes.
Therefore, we obtain a \(1\)-parameter family \((\rho_{\kappa},p_{\kappa},\lambda_{\kappa},\mu_{\kappa})\). In this paper we demonstrate a stability analysis and  turning point principle for the above equilibrium states.

\begin{theorem}[Linear stability and instability]\label{thm:main}
Assume \((\rho_{\kappa},p_{\kappa},\lambda_{\kappa},\mu_{\kappa})\) be a \(1\)-parameter family of spherically symmetric steady states to the Einstein-Euler system and \(\kappa\) be the central redshift of the corresponding steady state. For \(\kappa\) sufficiently large, the associated steady state is linearly unstable in the sense that its linearized system possesses an exponentially growing mode, while for a steady state with \(\kappa\) sufficiently small, there is no such growing mode, which means the steady state is linearly stable.
\end{theorem}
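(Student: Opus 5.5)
We reduce the statement to the sign of the lowest eigenvalue of a self-adjoint Sturm--Liouville operator, and then treat the two regimes of $\kappa$ separately by test functions (large $\kappa$) and a perturbative coercivity estimate (small $\kappa$).

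\emph{Reduction to a Sturm--Liouville problem.} Linearizing \eqref{eq:1-14}--\eqref{eq:1-19} around $(\rho_\kappa,p_\kappa,\lambda_\kappa,\mu_\kappa)$ in Lagrangian form, we introduce a rescaled radial displacement $\zeta$ with $\delta u \sim \ee^{-\lambda_\kappa}\dot\zeta$. We eliminate $\delta\lambda$ with \eqref{eq:1-16} and $\delta\rho,\delta p$ with \eqref{eq:1-18}. Then \eqref{eq:1-19} becomes a second-order equation
\[
W\ddot\zeta = -L_\kappa\zeta,\qquad L_\kappa\zeta=-\frac{1}{r^2}\bigl(\mathcal P\,(r^2\zeta)'\bigr)'+\mathcal V\zeta \quad\text{on }(0,R_\kappa),
\]
with $\mathcal P\sim \ee^{3\mu+\lambda}(\rho+p)P'(\rho)$ and $W\sim \ee^{3\lambda+\mu}(\rho+p)$, both positive. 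The potential $\mathcal V$ is explicit in terms of $\mu',\lambda',p$. Because $\rho_0>0$, the weight does not degenerate at $r=R_\kappa$. Linearizing $p=0$ on the moving boundary therefore gives a Robin condition $\mathcal P(r^2\zeta)'+\beta_\kappa r^2\zeta=0$ at $R_\kappa$, where $\beta_\kappa$ involves $p'(R_\kappa)$.

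The associated quadratic form $\langle L_\kappa\zeta,\zeta\rangle$ on $L^2_W$ is then semibounded and closed, with compact resolvent since the problem is regular at $R_\kappa$ and limit-point-regular at $0$. So the spectrum is discrete and real. Growing modes $\ee^{\gamma t}$, $\gamma>0$, correspond exactly to negative eigenvalues $-\gamma^2$. The theorem thus reduces to the sign of $\inf$ of the Rayleigh quotient $\langle L_\kappa\zeta,\zeta\rangle/\|\zeta\|_W^2$.

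\emph{Small $\kappa$.} As $\kappa\to0$ we have $\rho_\kappa(0)\to\rho_0$, and the Section \ref{sec2} construction gives $R_\kappa\to0$ with $\mu',\lambda,p=O(\kappa)$ uniformly after rescaling $r=R_\kappa s$. In the rescaled variable the operator converges to a Newtonian-like operator for a homogeneous incompressible-limit ball with $P'(\rho_0)>0$. The limit has a principal term $-\frac{1}{s^2}(P'(\rho_0)\rho_0(s^2\zeta)')'$ that is coercive, while the potential and boundary terms carry a factor $R_\kappa^2\rho_0\sim\kappa$. A Hardy-type inequality on $(0,1)$ controls $\int\zeta^2$ and the boundary value by $\int |(s^2\zeta)'|^2 s^{-2}$. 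Absorbing the $O(\kappa)$ terms, the quadratic form is positive definite for $\kappa$ small, so there is no growing mode.

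\emph{Large $\kappa$.} Here we exhibit a test function with negative energy. As $\kappa\to\infty$, the asymptotic linearity \eqref{eq:1-9}--\eqref{eq:1-10} makes the rescaled steady states near the center converge to the scale-invariant singular solution $\rho=\frac{a}{r^2}$ of the TOV system with $p=\csq\rho$; this requires an ODE analysis of the autonomous TOV system in logarithmic variables. For that singular solution the linearized operator has a potential of inverse-square type, $\mathcal V\sim -C/r^2$, and the relevant constant exceeds the Hardy threshold. This is the relativistic analogue of the Chandrasekhar/Zel'dovich oscillation of $M(\kappa)$, where the mass-radius spiral oscillates.

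We therefore build $\zeta$ supported in an annulus $\{\epsilon\le r\le \delta\}$ adapted to the singular solution, e.g.\ $\zeta=r^{-3/2}\sin(\omega\log r)\chi$, giving negative form for the limiting operator. Stability of this negativity under the convergence $\kappa\to\infty$, uniform on compact annuli in scaled variables, yields a negative eigenvalue for large $\kappa$.

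The main obstacle is this last step. It requires proving the convergence to the singular solution with quantitative rates using \eqref{eq:1-10} and $P''\to0$, and verifying that the inverse-square constant of the limiting potential is supercritical for every $\csq\in(0,1]$. If the direct Hardy comparison fails, the fallback is the turning-point route: show $M'(\kappa)$ changes sign infinitely often via the spiral of the logarithmic TOV system. Then one uses the regular zero solution $\partial_\kappa$ of the equilibrium family to count, by Sturm oscillation, at least one negative eigenvalue.
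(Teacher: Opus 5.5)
Your overall architecture is the paper's. You reduce to the sign of the lowest eigenvalue of the weighted Robin problem. For small $\kappa$ you rescale to $[0,1]$ and absorb the $O(R_\kappa^2)$ lower-order and surface terms into a coercive principal part via a Hardy-type inequality. For large $\kappa$ you use a test function localized where the star resembles the self-similar solution of $p=\csq\rho$ from the planar TOV system. The small-$\kappa$ half is fine: your divergence-form Hardy inequality for $g=s^2\zeta$ with $g(0)=0$ even bounds the trace directly, whereas the paper expands the square and uses the positive surface term $3P'(\rho_0)R_\kappa^3\chi(R_\kappa)^2$ with \eqref{eq:4-8}. The gaps are in the large-$\kappa$ half, which carries the content of the theorem.

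\textbf{The decisive sign is never established.} You list supercriticality as an open obstacle and prepare a fallback instead of computing it. The exponent is not free: it must make the measure scale invariant. In the paper's variable this means $\Lambda_\kappa y^{2b-1}\propto y^{-1}$, which forces $b=-\eta_*$ because $\Lambda_\kappa\simeq\mathfrak C_\kappa y^{2\eta_*}$ on the annulus. For a generic exponent such as $r^{-3/2}$, one end of the annulus dominates, and the cutoff error there is of the same order as the bulk. With $b=-\eta_*$ the bulk integrand is the constant
\[
q_*=-\frac{\csq\,[7+42\csq-(\csq)^2]}{4(1+\csq)^2}=-\csq\,\omega_*^2,
\qquad
\omega_*=\frac{\sqrt{7+42\csq-(\csq)^2}}{2(1+\csq)},
\]
where $\omega_*$ is exactly the imaginary part of the eigenvalues \eqref{eq:2-added-039} of $DF(Z_*)$. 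So your ``supercritical inverse-square potential'' is literally the statement that the TOV orbit spirals into $Z_*$, and it holds for every $\csq\in(0,1]$. This also shows that no oscillating factor is needed. The pure power $y^{-\eta_*}$, cut off to an annulus whose logarithmic width tends to infinity, gives $-c\,\mathfrak C_\kappa\log\kappa$ against $O(\mathfrak C_\kappa)$ from the cutoff derivatives. With $\sin(\omega\log y)$ inserted, the averaged integrand becomes $\tfrac{\csq}{2}(\omega^2-\omega_*^2)$, so you would additionally need $\omega<\omega_*$. The fallback is not a shortcut. Sign changes of $\partial_\kappa M_\kappa$ control crossings only at critical parameters. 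A negative eigenvalue for every large $\kappa$ would therefore need either the bending orientation at all later extrema, or interior zeros of $v_\kappa$ for all large $\kappa$. The latter requires the same annulus asymptotics as the direct argument.

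\textbf{The localization as stated does not reach the singular solution.} On compact sets of $q=\ee^{2\kappa}r$, the rescaled stars converge to the regular massless profile $p_0^*$ of \eqref{eq:2-added-032}, which is self-similar only as $q\to\infty$. A fixed unscaled annulus $\{\epsilon\le r\le\delta\}$ lies outside the range of Lemma~\ref{lem:pressure-comparison}, whose Gronwall factor is $\exp(C\ee^{8\kappa}r^4)$. Controlling it would need a separate convergence theorem toward a singular solution of the full TOV system. The self-similar regime is the intermediate zone $1\ll q\ll\ee^{2\kappa}$, and you need uniform coefficient control there. The paper uses $q\in[\kappa^{\alpha_1},\kappa^{\alpha_2}]$ (Proposition~\ref{prop:inner-annulus}). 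At the lower end it uses the rate $q^{-\gamma}$ of Lemma~\ref{lem:limiting-orbit}. At the upper end it takes $\alpha_2<1/4$, so that $\ee^{-2\kappa}\exp(Cq^4)$ stays negligible. An equally valid repair keeps a fixed compact $q$-annulus placed at large $q$. You would prove negativity for the operator linearized around $p_0^*$, using only $w(\tau)\to Z_*$ and scale invariance of the limiting form, and then transfer it to $L_\kappa$ by convergence on that compact set. Either way, this step must be made explicit.
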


To state the turning point theorem, let $y$ denote the equilibrium
radial coordinate, $m_\kappa$ the mass in \eqref{eq:2-4},
and $M_\kappa=m_\kappa(R_\kappa)$ the total mass.
We use the radial operator $L_\kappa$ with the free surface condition
in Proposition~\ref{prop:radial-operator}, the integrating factor
$\Lambda_\kappa$ and weighted pairing in
\eqref{eq:4-added-003} and \eqref{eq:4-added-006}, respectively,
and $n^u(\kappa)$ for the number of growing modes, as in
\eqref{eq:5-added-033}. In the theorem below, $\langle L_\kappa f,g\rangle_{\Lambda_\kappa}$ denotes the
bilinear form associated with \eqref{eq:4-1}. Parameter derivatives
of equilibrium profiles are taken at fixed $y$.
\begin{theorem}[Turning point principle]\label{thm:turning}
Under the hypotheses of Theorem \ref{thm:main}, the following conclusions hold
\begin{enumerate}[label=(\roman*),leftmargin=2em]
\item The marginal displacement
\begin{equation}\label{eq:marginalintro}
 v_\kappa(y)=-\frac{\partial_\kappa m_\kappa(y)}
 {4\pi y^3(\rho_\kappa(y)+p_\kappa(y))}
\end{equation}
satisfies, For any $\chi\in H^1$,
\begin{equation}\label{eq:5-adde}
\langle L_\kappa v_\kappa,\chi\rangle_{\Lambda_\kappa}
=-b_\kappa\partial_\kappa M_\kappa\,\chi(R_\kappa),
\qquad
b_\kappa=\frac{\Lambda_\kappa(R_\kappa)M_\kappa}
{4\pi\rho_0 R_\kappa^3(R_\kappa-2M_\kappa)}>0.
\end{equation}
In particular, $\ker L_\kappa\ne\{0\}$ if and only if $\partial_\kappa M_\kappa=0$, and this kernel is then spanned by $v_\kappa$.
\item Let $\kappa_0$ be a nondegenerate mass critical parameter, that is,
\begin{equation}
\left.\partial_\kappa M_\kappa\right|_{\kappa=\kappa_0}=0,
\qquad
\left.\partial_\kappa^2M_\kappa\right|_{\kappa=\kappa_0}\ne0.
\end{equation}
At a mass critical point $\kappa_0$, we have $\partial_\kappa R_\kappa|_{\kappa_0}\ne0$. The number of growing modes \(n^u(\kappa)\) can only change at the
extrema of the mass function \(\kappa \mapsto M_\kappa\).
At an extremum of \(\kappa \mapsto M_\kappa\), \(n^u(\kappa)\) increases
by \(1\) if the sign of
\(\frac{d}{d\kappa}M_\kappa \frac{d}{d\kappa}R_\kappa\)
changes from \(-\) to \(+\) as \(\kappa\) increases, and similarly
it decreases by \(1\) if the sign of
\(\frac{d}{d\kappa}M_\kappa \frac{d}{d\kappa}R_\kappa\)
changes from \(+\) to \(-\) as \(\kappa\) increases.
\end{enumerate}
\end{theorem}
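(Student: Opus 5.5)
The proof of part (i) starts from the fact that the static system \eqref{eq:1-20}--\eqref{eq:1-23} holds identically in $\kappa$ at fixed $y$, so we differentiate it in $\kappa$. The linearized radial equation \eqref{eq:4-1} is derived by perturbing the steady state with a Lagrangian displacement $\xi=y\,v$. In that derivation the linearized mass is $\delta m=-4\pi y^2(\rho+p)\xi$; this follows from \eqref{eq:1-16} integrated in time. So the choice $v_\kappa=-\partial_\kappa m_\kappa/(4\pi y^3(\rho_\kappa+p_\kappa))$ is exactly the displacement whose linearized mass equals $\partial_\kappa m_\kappa$. The linearized density, pressure and $\mu'$ generated by $v_\kappa$ then coincide with the $\kappa$-derivatives of $\rho_\kappa,p_\kappa,\mu_\kappa'$. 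To see this, differentiate \eqref{eq:2-4}, \eqref{eq:1-21} and \eqref{eq:1-23} in $\kappa$ and compare with the constitutive relations of the linearization. Since the $\kappa$-derivative of \eqref{eq:1-23} vanishes, the interior expression $L_\kappa v_\kappa$ vanishes pointwise on $(0,R_\kappa)$.

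Regularity at the center is inherited from the smooth dependence of the equilibrium family on $\kappa$, established in Section \ref{sec2}. Near $y=0$ we have $\partial_\kappa m_\kappa=O(y^3)$, so $v_\kappa$ is bounded, and $v_\kappa\in H^1$. With this, the weak form $\langle L_\kappa v_\kappa,\chi\rangle_{\Lambda_\kappa}$ reduces after integration by parts to a boundary term at $R_\kappa$. That term is the Robin condition of Proposition~\ref{prop:radial-operator} applied to $v_\kappa$, multiplied by $\Lambda_\kappa(R_\kappa)\chi(R_\kappa)$.

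To compute this boundary defect we differentiate in $\kappa$ the identity $p_\kappa(R_\kappa)=0$. Using $p_\kappa'(R_\kappa)=-\rho_0\mu_\kappa'(R_\kappa)$ and $\mu'(R)=M/(R(R-2M))$, this gives $\partial_\kappa p_\kappa(R_\kappa)=\rho_0\mu_\kappa'(R_\kappa)\,\partial_\kappa R_\kappa$. The Robin condition involves $\delta p+p'\xi$ (the Lagrangian pressure perturbation) together with the $\delta\mu'$ term. We substitute $\xi=R_\kappa v_\kappa(R_\kappa)=-\partial_\kappa M_\kappa/(4\pi R_\kappa^2\rho_0)$, i.e. the value of $\partial_\kappa m_\kappa$ at $R_\kappa$, noting that $\partial_\kappa m_\kappa(R_\kappa)$ differs from $\partial_\kappa M_\kappa$ by $4\pi R^2\rho_0\partial_\kappa R$. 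After simplification, everything proportional to $\partial_\kappa R_\kappa$ should cancel, leaving $-b_\kappa\partial_\kappa M_\kappa$ with the stated $b_\kappa$. This bookkeeping, and getting the exact constant, is the step I expect to be most delicate.

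For the kernel statement, suppose $L_\kappa w=0$. The interior ODE is second order and regular singular at $0$, so the regular solutions form a one-dimensional space, spanned by $v_\kappa$. We must check that $v_\kappa\not\equiv0$: near $y=0$, $\partial_\kappa m_\kappa\approx \frac{4\pi}{3}y^3\partial_\kappa\rho_\kappa(0)$, and $\partial_\kappa\rho_\kappa(0)=1/Q'(\rho_\kappa(0))\neq0$. Hence a kernel element exists iff the boundary defect vanishes, i.e. iff $\partial_\kappa M_\kappa=0$.

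For part (ii), $\partial_\kappa R_\kappa\ne0$ at $\kappa_0$ follows from uniqueness. If both $\partial_\kappa M$ and $\partial_\kappa R$ vanished, then $v_\kappa(R)=0$ and the pressure derivative would also vanish. The Cauchy data of the regular solution at $R$ would then be zero, forcing $v_\kappa\equiv0$, a contradiction.

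The spectrum is discrete and real, and $n^u$ equals the negative Morse index (results stated earlier). Since eigenvalues vary continuously in $\kappa$, $n^u$ can change only when $0$ is an eigenvalue, i.e. at mass critical points, where the kernel is simple. At a nondegenerate $\kappa_0$, the eigenvalue $\sigma(\kappa)$ crossing zero is analytic with eigenfunction $\phi(\kappa)\to v_{\kappa_0}$. We pair $L_\kappa v_\kappa$ against $\phi(\kappa)$ and use symmetry:
\[
\sigma(\kappa)\langle v_\kappa,\phi\rangle_{\Lambda_\kappa}=-b_\kappa\partial_\kappa M_\kappa\,\phi(R_\kappa).
\]
As $\kappa\to\kappa_0$, $\langle v_\kappa,\phi\rangle\to\|v_{\kappa_0}\|^2>0$ and $\phi(R_\kappa)\to v_{\kappa_0}(R_{\kappa_0})$. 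At $\kappa_0$, however, $v_{\kappa_0}(R_{\kappa_0})=\partial_\kappa R_\kappa$ times a fixed nonzero factor, coming from the $\rho_0\partial_\kappa R$ correction above. Therefore $\operatorname{sign}\sigma(\kappa)=-\operatorname{sign}(\partial_\kappa M_\kappa\,\partial_\kappa R_\kappa)$ up to a fixed sign, which I would determine from the small-$\kappa$ regime of Theorem \ref{thm:main}. Consequently, a sign change of $\partial_\kappa M\,\partial_\kappa R$ from $-$ to $+$ makes $\sigma$ go from positive to negative, so $n^u$ increases by one, and vice versa. The main obstacle is the precise boundary defect computation above, together with identifying $v_{\kappa_0}(R_{\kappa_0})$ with $\partial_\kappa R_\kappa$ with the correct sign.
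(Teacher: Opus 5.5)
Your route is the paper's. You differentiate the static family to get the interior zero solution $v_\kappa$ and reduce $\langle L_\kappa v_\kappa,\chi\rangle_{\Lambda_\kappa}$ to the surface term $\Lambda_\kappa(R_\kappa)P'(\rho_0)\mathcal D_\kappa v_\kappa(R_\kappa)\chi(R_\kappa)$. You then use the regular-singular structure at $y=0$ for the kernel and test the eigenvalue equation against $v_\kappa$. Differentiating $p_\kappa(R_\kappa)=0$ instead of $\rho_\kappa(R_\kappa)=\rho_0$ is an equivalent variant. Two points need fixing, and the second is a genuine gap.

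\textbf{The boundary defect.} The Robin condition contains no $\delta\mu'$ term. It is exactly $\mathcal D_\kappa v_\kappa(R_\kappa)=0$, i.e.\ the vanishing of the Lagrangian pressure perturbation $\partial_\kappa p_\kappa+yp_\kappa'v_\kappa=-P'(\rho_\kappa)(\rho_\kappa+p_\kappa)\mathcal D_\kappa v_\kappa$. The correct surface value is $R_\kappa v_\kappa(R_\kappa)=\partial_\kappa R_\kappa-\partial_\kappa M_\kappa/(4\pi R_\kappa^2\rho_0)$. Combined with your formula $\partial_\kappa p_\kappa(R_\kappa)=\rho_0\mu_\kappa'(R_\kappa)\partial_\kappa R_\kappa$, the $\partial_\kappa R_\kappa$ terms cancel and leave $\mu_\kappa'(R_\kappa)\partial_\kappa M_\kappa/(4\pi R_\kappa^2)$. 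Keeping the factor $P'(\rho_0)$ in the boundary term, this gives $b_\kappa$ exactly.

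\textbf{The sign of the crossing (the gap).} You leave $\operatorname{sgn}\sigma$ ``up to a fixed sign'' to be calibrated from the small-$\kappa$ regime. That calibration cannot work. Small-redshift stability only gives $n^u=0$ on an initial interval. To fix the constant you would need the sign of $\partial_\kappa M_\kappa\,\partial_\kappa R_\kappa$ just before a first crossing, and nothing available controls the sign of $\partial_\kappa R_\kappa$ there, since the radius may already be decreasing. The direction of the jump in $n^u$ is the substance of (ii), so this must be closed, and it can be closed directly, because every factor in
$\nu_*(\kappa)\int_0^{R_\kappa}W_\kappa\chi_\kappa v_\kappa\,dy=-b_\kappa\,\partial_\kappa M_\kappa\,\chi_\kappa(R_\kappa)$
has an explicit sign:
\begin{itemize}
\item $b_\kappa>0$, since $\Lambda_\kappa(R_\kappa)>0$ and $R_\kappa>2M_\kappa$.
\item The integral tends to $\int_0^{R_{\kappa_0}}W_{\kappa_0}v_{\kappa_0}^2\,dy>0$. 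The pairing carries $W_\kappa$, from the generalized eigenvalue problem, not $\Lambda_\kappa$ as you wrote.
\item At $\kappa_0$ the surface identity gives $v_{\kappa_0}(R_{\kappa_0})=\partial_\kappa R_\kappa|_{\kappa_0}/R_{\kappa_0}$, with the positive factor $1/R_{\kappa_0}$. Hence $\chi_\kappa(R_\kappa)$ has the sign of $\partial_\kappa R_\kappa$ nearby.
\end{itemize}
Therefore $\operatorname{sgn}\nu_*(\kappa)=-\operatorname{sgn}(\partial_\kappa M_\kappa\,\partial_\kappa R_\kappa)$ near $\kappa_0$, with nothing left to calibrate.

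Finally, the family is only $C^2$ in $\kappa$, so analyticity of the eigenvalue branch is not available. Continuity of the simple eigenvalue and eigenfunction on the rescaled interval, plus the identity above, is all you need. Likewise $v_\kappa\in H^1$ needs $v_\kappa'=O(y)$ from the central expansion, not just boundedness of $v_\kappa$.
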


\subsection{History and related works}
In the past few decades, much progress has been made beyond the local theory for Einstein equations. However, in the presence
of isolated bodies, especially with free boundary, our understanding on long time evolution is very limited. Such a theory for isolated bodies is of central importance as
it is naturally a preliminary step in any further analysis of the motion and interaction
of gravitating bodies. Christodoulou \cite{Christodoulou1995}
introduced the hard phase model. For this model, Fournodavlos and
Schlue \cite{FournodavlosSchlue2019} constructed a
\(1\)-parameter family of static stars. They showed that
small stars locally minimize a mass energy functional
under particle number preserving variations and proved
uniform boundedness of a linearized energy. Moreover, Miao and Shahshahani \cite{MiaoShahshahani2024} proved well-posedness for the free boundary hard phase model, and Hao, Huo, and Miao \cite{HaoHuoMiao2024} subsequently established well-posedness for the free boundary barotropic fluid model. These results provide theoretical support for our analysis.

Zel'dovich \cite{Zeldovich1963} and Wheeler formulated the turning point
principle for relativistic stars in the 1960s. Harrison, Thorne, Wakano,
and Wheeler \cite{HarrisonThorneWakanoWheeler1965} presented Wheeler's
formulation in their monograph. For a \(1\)-parameter
family of spherically symmetric equilibria with a fixed equation of state,
this principle predicts that the number of unstable radial modes can
change only at extrema of the total mass along the mass--radius curve.
The bending orientation of the curve at such a turning point determines
whether a growing mode is gained or lost. This provides a static criterion
for radial spectral stability in terms of the equilibrium family.

For the Einstein--Euler system over the entire spacetime,
Had\v{z}i\'c, Lin, and Rein \cite{HadzicLinRein2021}
developed a spectral analysis of the linearized dynamics
and proved instability at sufficiently large central
redshift. Had\v{z}i\'c and Lin \cite{HadzicLin2021}
further established a turning point principle relating
spectral stability to the mass--radius curve. There are crucial differences between our work and \cite{HadzicLinRein2021,HadzicLin2021} considered. Their model is gaseous, since the density is continuous across the boundary, but our model is liquid that the density is discontinuous across the boundary. The density jump at the liquid surface places the problem outside the gaseous framework, that is, the linearized operator acquires a surface term and a Robin boundary condition. As a result, no mean-zero constraint is imposed, and no winding index needs to be subtracted. Instead, we need to consider a new regular interior zero solution of the linearized equation.

Returning to the liquid model, Hao and Miao \cite{HaoMiao2025} proved that the hard phase model is linearly stable at a sufficiently small central redshift
and linearly unstable at a sufficiently large central redshift. Their proof uses
a variational analysis of the linearized operator,
including a localized test function in the large density regime. Combining this analysis with the nonlinear estimates that
Miao and Shahshahani \cite{MiaoShahshahani2024} established for the free boundary problem, \cite{HaoMiao2025} also established nonlinear
instability in the large density regime. Inspired by the linear stability analysis in \cite{HaoMiao2025} and \cite{Lamturn}, we study linear stability for a more general equation of state and a turning point principle for relativistic liquid stars. See the corresponding results in the Newtonian setting \cite{Lam2024,HaoMiao2024,Lam2026,Jang2014}.

\subsection{Main ideas for the proof}

\subsubsection{linear stability and instability}

The barotropic liquid model possesses a corresponding \(1\)-parameter family of compactly supported steady states with finite mass, parameterized by the value of the central redshift \(\kappa>0\). Let $\eta(y,t)$ be the radial position of the fluid particle at time $t$ so that 
	\begin{align}\label{def Lag coord}
	\partial_{t}\eta=\frac{u}{u^{0}}\circ \eta.
	\end{align} 
	Then for a steady state we have $\eta(y,t)\equiv y$ for all $t$. Now we consider a perturbation around a steady state such that 
	\begin{align}\label{def perturbation}
	\eta(y,t)=y(1+\zeta(y,t)).
	\end{align}
	In Proposition \ref{prop:radial-operator}, we derive a linearized equation for $\zeta$ of the form (see \eqref{eq:3-26})
	\begin{align}\label{eq:linear pre}
	y\ee^{2\lambda_\kappa-2\mu_\kappa}\ddot{\zeta}+L_\kappa\zeta=0,
	\end{align}
	with the Robin boundary condition \eqref{eq:3-27}, where $L_\kappa$ is the linearized operator. Therefore the stability analysis on the linearized system is reduced to the analysis on the generalized eigenvalues of the operator \(L_\kappa\). According to the functional tools introduced in Lemma \ref{lem:lowest-eigenvalue}, the smallest generalized eigenvalue of the operator \(L_\kappa\) corresponds to the minimum of the functional \(\left\langle L_\kappa\chi,\chi\right\rangle_{\Lambda_\kappa}\), under the normalization \(\|\chi\|_{W_\kappa}=1\) in \eqref{eq:4-added-015}, where \(\chi\) is a function of the radial variable \(y\). For \(\kappa>0\) sufficiently small, we can prove \(\left\langle L_\kappa\chi,\chi\right\rangle_{\Lambda_\kappa}\ge 0\) for any admissible \(\chi\), so there is no growing mode for the linearized system \eqref{eq:linear pre} and it is spectrally stable. To prove the instability for large values of \(\kappa>0\) it is therefore natural to construct an explicit test function \(\chi_{\kappa}\) such that \(\left\langle L_\kappa\chi_{\kappa},\chi_{\kappa}\right\rangle_{\Lambda_\kappa}<0\). The key to the construction is a precise understanding of the limiting behavior of the sequence of steady states \((\rho_{\kappa},p_{\kappa},\lambda_{\kappa},\mu_{\kappa})\) in the singular limit \(\kappa \to \infty\). We
	show in Proposition \ref{prop:inner-annulus} (see also \cite{HadzicLinRein2021}) that in a suitably rescaled annulus around the center \(r=0\) the behavior of the steady states is asymptotic to the equilibrium point of an autonomous planar dynamical system. Therefore we can make a judicious choice of a test function \(\chi_{\kappa}\) such that \(\left\langle L_\kappa\chi_{\kappa},\chi_{\kappa}\right\rangle_{\Lambda_\kappa}<0\), which implies the linear instability.

\subsubsection{The marginal displacement and the turning point principle}

The proof in Section~\ref{sec:turning} starts from the family of
steady states. Differentiating this family with respect to $\kappa$
gives the marginal displacement
\[
 v_\kappa(y)
 =-\frac{\partial_\kappa m_\kappa(y)}
 {4\pi y^3(\rho_\kappa(y)+p_\kappa(y))}.
\]
The differentiated static equations show that $v_\kappa$ is a
nonzero regular solution of $L_\kappa v_\kappa=0$ inside the star.
It is therefore a natural candidate for a zero eigenfunction. Write the
Robin boundary condition as $\mathcal D_\kappa v_\kappa(R_\kappa)=0$, where
$\mathcal D_\kappa f=yf'+(3-y\mu_\kappa')f$.
Differentiating the surface identities gives
\[
 \mathcal D_\kappa v_\kappa(R_\kappa)
 =-\frac{M_\kappa\partial_\kappa M_\kappa}
 {4\pi P'(\rho_0)\rho_0 R_\kappa^3(R_\kappa-2M_\kappa)}.
\]
Thus $v_\kappa$ satisfies the boundary condition exactly when
$\partial_\kappa M_\kappa=0$. Uniqueness of regular interior solutions
then gives $\ker L_\kappa=\operatorname{span}\{v_\kappa\}$ when
$\partial_\kappa M_\kappa=0$ and $\ker L_\kappa=\{0\}$ elsewhere. Since the eigenvalues
vary continuously with $\kappa$, the number of growing modes can
change only at mass critical parameters. Near a nondegenerate mass critical parameter, only one eigenvalue
$\nu_*(\kappa)$ crosses zero. The boundary calculation and symmetry
of the linearized operator give
\[
 \operatorname{sgn}\nu_*(\kappa)
 =-\operatorname{sgn}\!\left[
 (\partial_\kappa M_\kappa)(\partial_\kappa R_\kappa)
 \right].
\]
Each negative eigenvalue gives one growing mode. Hence the number
of growing modes increases by one when this product changes from
negative to positive, and decreases by one for the reverse change.
This gives the turning point principle described by the mass--radius curve.

\section{Steady states and the dependence on the central enthalpy}\label{sec2}

For a steady state all physical quantities are independent of time and
the spatial velocity vanishes.  Equations \eqref{eq:1-16} and \eqref{eq:1-18} are then
identically satisfied, while \eqref{eq:1-20}--\eqref{eq:1-23} remain.  We first reduce this
static system to one ordinary differential equation.

Define the relativistic enthalpy coordinate by
\begin{equation}\label{eq:2-added-001}
Q(\rho):=\int_{\rho_0}^{\rho}
 \frac{P'(s)}{s+P(s)}\dd s,
 \qquad \rho\ge\rho_0.
\end{equation}
It is strictly increasing because $P'>0$.  The high-density assumptions
give
\begin{equation}\label{eq:2-added-002}
 \frac{P'(\rho)}{\rho+P(\rho)}
 =\frac{\csq}{(1+\csq)\rho}+o(\rho^{-1}).
\end{equation}
Consequently $Q(\rho)\to\infty$, so $Q$ maps
$[\rho_0,\infty)$ onto $[0,\infty)$ and is invertible.  The static Euler equation \eqref{eq:1-23} becomes
\begin{equation}\label{eq:2-added-003}
 \frac{\dd}{\dd r}\bigl(Q(\rho(r))+\mu(r)\bigr)=0.
\end{equation}
Writing $\bar{y}(r)=C-\mu(r)$, the density is
\begin{equation}\label{eq:2-1}
 \rho=g(\bar{y}):=
 \begin{cases}
 Q^{-1}(\bar{y}),&\bar{y}\ge0,\\
 0,&\bar{y}<0.
 \end{cases}
\end{equation}
The corresponding pressure function is
\begin{equation}\label{eq:2-2}
 p=h(\bar{y}):=
 \begin{cases}
 P(g(\bar{y})),&\bar{y}\ge0,\\
 0,&\bar{y}<0.
 \end{cases}
\end{equation}
Thus $g(0^+)=\rho_0$ and $h(0)=0$. 
Integrating \eqref{eq:1-20} and using $\lambda(0)=0$ gives
\begin{equation}\label{eq:2-3}
 \ee^{-2\lambda(r)}=1-\frac{2m(r)}r,
\end{equation}
where
\begin{equation}\label{eq:2-4}
 m(r)=4\pi\int_0^r s^2\rho(s)\dd s
 =4\pi\int_0^r s^2g(\bar{y}(s))\dd s.
\end{equation}
Substitution in \eqref{eq:1-21} yields
\begin{equation}\label{eq:2-5}
 \bar{y}'(r)=-\frac{1}{1-2m(r)/r}
 \left(\frac{m(r)}{r^2}+4\pi r h(\bar{y}(r))\right),
\end{equation}
with central value
\begin{equation}\label{eq:2-6}
 \bar{y}(0)=\kappa>0.
\end{equation}
The parameter $\kappa$ is the central redshift.

\begin{lemma}
Let $m$ and $p$ be defined by \eqref{eq:2-2} and \eqref{eq:2-4}.  For sufficiently small
$\widetilde\delta>0$ there exists a unique solution
$\bar{y}:[0,\widetilde\delta]\to\mathbb R$ of \eqref{eq:2-5} satisfying \eqref{eq:2-6}.
\end{lemma}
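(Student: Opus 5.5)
We treat this as local existence and uniqueness for an integro-differential equation with a singularity at $r=0$, and prove it by a contraction argument in $C([0,\widetilde\delta])$. Since $\kappa>0$, continuity keeps $\bar y$ positive near the center. On this range $g=Q^{-1}$ and $h=P\circ Q^{-1}$ are $C^1$: $Q$ is $C^1$ with $Q'(\rho)=P'(\rho)/(\rho+P(\rho))>0$, so $Q^{-1}$ is $C^1$, and $P\in C^2$. Hence both are Lipschitz on $[\kappa/2,2\kappa]$, and the jump of $g$ at $\bar y=0$ never enters.

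Fix $\widetilde\delta$ small and let
\[
X=\{\bar y\in C([0,\widetilde\delta]) : \|\bar y-\kappa\|_\infty\le \kappa/2\}.
\]
For $\bar y\in X$ put $m_{\bar y}(r)=4\pi\int_0^r s^2 g(\bar y(s))\dd s$. Then $0\le m_{\bar y}(r)\le \tfrac{4\pi}{3}g(2\kappa)\,r^3$, so $2m_{\bar y}(r)/r\le Cr^2\le\tfrac12$ once $\widetilde\delta$ is small, and $m_{\bar y}(r)/r^2\le Cr$. Define
\[
T\bar y(r)=\kappa-\int_0^r \frac{1}{1-2m_{\bar y}(s)/s}\Bigl(\frac{m_{\bar y}(s)}{s^2}+4\pi s\,h(\bar y(s))\Bigr)\dd s .
\]
The integrand is bounded by $C s$, so $|T\bar y(r)-\kappa|\le C\widetilde\delta^2\le\kappa/2$, and $T$ maps $X$ into itself.

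For the contraction estimate, the Lipschitz bound on $g$ gives $|m_{\bar y_1}(s)-m_{\bar y_2}(s)|\le C s^3\|\bar y_1-\bar y_2\|_\infty$. The map $(a,b)\mapsto b/(1-2a)$ is Lipschitz on the relevant bounded region, and $h$ is Lipschitz. Together these bound the difference of the integrands by $C s\|\bar y_1-\bar y_2\|_\infty$, so
\[
\|T\bar y_1-T\bar y_2\|_\infty\le C\widetilde\delta^2\|\bar y_1-\bar y_2\|_\infty .
\]
For $\widetilde\delta$ small this is a contraction. Banach's fixed point theorem gives a unique fixed point in $X$. That fixed point is $C^1$ on $(0,\widetilde\delta]$, extends $C^1$ to $r=0$ with $\bar y'(0)=0$, and solves \eqref{eq:2-5}--\eqref{eq:2-6}.

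Uniqueness among all continuous solutions follows because any solution with $\bar y(0)=\kappa$ stays in $X$ on a possibly smaller interval $[0,\delta']$. There it is a fixed point of $T$, so it agrees with ours on $[0,\delta']$. Standard ODE uniqueness then extends the agreement to all of $[0,\widetilde\delta]$, since for $r\ge\delta'>0$ the right-hand side of \eqref{eq:2-5} is regular.

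The main obstacle is the apparent singularity of $m/r^2$ and $2m/r$ at $r=0$. The cubic bound on $m$ resolves it, so that step should be handled explicitly; everything else is routine.
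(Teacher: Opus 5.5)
Your proof is correct and is essentially the paper's argument: a Banach contraction for the integrated form of \eqref{eq:2-5} on a short interval, where the cubic bound $m(r)\le Cr^3$ removes the apparent singularity at $r=0$ and yields a contraction constant of order $\widetilde\delta^{2}$. The differences are cosmetic: you iterate on $\bar y$ instead of on $\mu=C-\bar y$, and you explicitly check the self-mapping property and uniqueness among all solutions, not just those in the ball, which the paper leaves implicit.
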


\begin{proof}
Since $\bar{y}(r)=C-\mu(r)$, it is equivalent to solve
\begin{equation}\label{eq:2-7}
 \mu'(r)=\frac{1}{1-2m(r)/r}
 \left(\frac{m(r)}{r^2}+4\pi r p(r)\right),
 \qquad \mu(0)=\mu_0.
\end{equation}
Introduce
\begin{equation}\label{eq:2-added-004}
 (T\mu)(r):=\mu_0+
 \int_0^r\frac{1}{1-2m(s)/s}
 \left(\frac{m(s)}{s^2}+4\pi s p(s)\right)\dd s.
\end{equation}
Let $\bar{b}_\kappa=\min\{1,\kappa/2\}$.  Consider the closed set
\begin{equation}\label{eq:2-added-005}
 \mathcal M=\left\{\mu\in C([0,\widetilde\delta]):
 \mu(0)=\mu_0,\quad
 \mu_0\le\mu(r)\le\mu_0+\bar{b}_\kappa,\quad
 \frac{2m(r)}r\le\frac12\right\}.
\end{equation}
\(\mathcal M\) is complete with respect to the norm \(\Vert\cdot\Vert_{\infty}\). We show that \(T\) is a contraction map. For $\mu_1,\mu_2\in\mathcal M$, we have
\begin{equation}\label{eq:2-added-008}
 |m_1(s)-m_2(s)|
 \le4\pi\int_0^s\sigma^2
 |g(\bar{y}_1(\sigma))-g(\bar{y}_2(\sigma))|\dd\sigma
 \le Cs^3\|\mu_1-\mu_2\|_\infty
\end{equation}
and
\begin{equation}\label{eq:2-added-009}
 |p_1(s)-p_2(s)|\le C\|\mu_1-\mu_2\|_\infty.
\end{equation}
Using \eqref{eq:2-added-004}, we have
\begin{equation}\label{eq:2-added-010}
\begin{aligned}
 &(T\mu_1)(r)-(T\mu_2)(r)\\
 &\quad=\int_0^r
 \frac{2(m_1-m_2)}{(1-2m_1/s)(1-2m_2/s)s}
 \left(\frac{m_1}{s^2}+4\pi s p_1\right)\dd s\\
 &\qquad+
 \int_0^r\frac{1}{1-2m_2/s}
 \left(\frac{m_1-m_2}{s^2}+4\pi s(p_1-p_2)\right)\dd s.
\end{aligned}
\end{equation}
Each denominator factor is bounded below by $1/2$ by
\eqref{eq:2-added-005}.  Using \eqref{eq:2-added-008}--\eqref{eq:2-added-009}
in \eqref{eq:2-added-010}, the first integral is bounded by
$C\|\mu_1-\mu_2\|_\infty\int_0^r s^3\dd s$ and the second by
$C\|\mu_1-\mu_2\|_\infty\int_0^r s\dd s$.  Hence
\begin{equation}\label{eq:2-added-011}
 \|T\mu_1-T\mu_2\|_\infty
 \le C(\widetilde\delta^4+\widetilde\delta^2)
 \|\mu_1-\mu_2\|_\infty.
\end{equation}
Choosing small enough $\widetilde\delta>0$, the estimates show that $T\mathcal M\subset\mathcal M$.
Fixed point theorem gives a unique solution of \eqref{eq:2-7}, and hence
of \eqref{eq:2-5}.
\end{proof}

The relation used to continue this local solution is the
Tolman--Oppenheimer--Volkoff identity.

\begin{lemma}
For every static solution,
\begin{equation}\label{eq:2-8}
 \lambda'+\mu'=4\pi r\ee^{2\lambda}(\rho+p).
\end{equation}
\end{lemma}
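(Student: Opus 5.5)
The identity \eqref{eq:2-8} will follow directly from the two static field equations \eqref{eq:1-20} and \eqref{eq:1-21}. No information from the Euler equation \eqref{eq:1-23} is needed. I will solve each equation for the corresponding metric derivative and then add.

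First I rewrite \eqref{eq:1-20} as $2r\lambda'\ee^{-2\lambda}=8\pi r^2\rho-1+\ee^{-2\lambda}$. Multiplying by $\ee^{2\lambda}/(2r)$, valid for $r>0$, gives
\[
\lambda'=4\pi r\ee^{2\lambda}\rho+\frac{1-\ee^{2\lambda}}{2r}.
\]
In the same way, \eqref{eq:1-21} gives $2r\mu'\ee^{-2\lambda}=8\pi r^2p+1-\ee^{-2\lambda}$, so
\[
\mu'=4\pi r\ee^{2\lambda}p+\frac{\ee^{2\lambda}-1}{2r}.
\]
Adding these two expressions, the terms $\pm(\ee^{2\lambda}-1)/(2r)$ cancel. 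The sum is exactly $4\pi r\ee^{2\lambda}(\rho+p)$, which is \eqref{eq:2-8} for $r>0$.

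It remains to handle $r=0$ and the surface of the star. At $r=0$ the identity follows by continuity, using the regularity condition \eqref{eq:1-13} $\lambda(0)=0$ and the representation \eqref{eq:2-3}. Together these show that $\lambda'$ and $\mu'$ extend continuously to the center, and both sides of \eqref{eq:2-8} vanish there. Across the surface, $\rho$ jumps, so $\lambda'$ jumps as well. The identity is then understood pointwise on each side, where both sides jump consistently, and in the vacuum region both sides equal zero.

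There is no real obstacle in this argument. The only point requiring care is the division by $r$ and the interpretation of the identity at the center and at the density discontinuity, and both are handled by the continuity and one-sided considerations just described.
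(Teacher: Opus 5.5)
Your proof is correct and matches the paper's argument: the paper adds \eqref{eq:1-20} and \eqref{eq:1-21} to get $2r\ee^{-2\lambda}(\lambda'+\mu')=8\pi r^2(\rho+p)$ and then divides by $2r\ee^{-2\lambda}$, whereas you divide each equation first and then add, which is the same computation. Your remarks on the center and on the one-sided validity across the density jump are consistent with the paper and go slightly beyond what it records.
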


\begin{proof}
Add \eqref{eq:1-20} and \eqref{eq:1-21}.  The constant terms cancel and we obtain
\begin{equation}\label{eq:2-added-012}
 2r\ee^{-2\lambda}(\lambda'+\mu')
 =8\pi r^2(\rho+p).
\end{equation}
Division by $2r\ee^{-2\lambda}$ gives \eqref{eq:2-8}.
\end{proof}

\begin{lemma}
For every $\kappa>0$, the local solution of \eqref{eq:2-5}--\eqref{eq:2-6} extends uniquely
to $[0,\infty)$.
\end{lemma}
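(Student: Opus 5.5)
The proof is a standard continuation argument for the ODE \eqref{eq:2-5} coupled with \eqref{eq:2-4}. The aim is to show that the maximal solution cannot blow up or degenerate in finite $r$. Let $[0,r^*)$ be the maximal interval on which a solution $\bar y$ exists with $2m(r)/r<1$. Uniqueness on any subinterval follows exactly as in the local lemma. Away from $r=0$, the right-hand side of \eqref{eq:2-5} is locally Lipschitz in $(\bar y,m)$ wherever $\bar y>0$ and $2m/r<1$, since $g=Q^{-1}$ and $h=P\circ g$ are $C^1$ there. So the only things to rule out are: $\bar y$ becoming unbounded, $2m(r)/r\to1$, and loss of regularity when $\bar y$ crosses $0$.

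\textbf{Monotonicity.} By \eqref{eq:2-5}, $\bar y'\le0$ as long as $2m/r<1$. Hence $0\le\bar y(r)\le\kappa$ wherever $\bar y\ge0$, and $\rho=g(\bar y)\le g(\kappa)$ is bounded. Since $\bar y'<0$ strictly for $r>0$, there are only two possibilities.
\begin{itemize}
\item If $\bar y$ reaches $0$ at some first radius $R_\kappa<r^*$, then for $r>R_\kappa$ we have $\rho=p=0$. In that region $m\equiv M:=m(R_\kappa)$, and \eqref{eq:2-5} becomes the explicit Schwarzschild equation $\bar y'=-M/(r^2(1-2M/r))$. This is globally solvable on $[R_\kappa,\infty)$ provided $2M/R_\kappa<1$, which we will already have established. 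The jump of $g$ at $\bar y=0$ is harmless: it only changes the right-hand side of the $m$-equation, and $\bar y$ stays Lipschitz. Beyond $R_\kappa$ we simply solve the vacuum ODE.
\item If $\bar y$ stays positive, we stay in the smooth regime and only need the a priori bound on $2m/r$.
\end{itemize}

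\textbf{Main step: $\sup 2m(r)/r<1$ on compact subsets of $[0,r^*)$.} This is a Buchdahl-type bound and is the main obstacle. I would use the TOV identity \eqref{eq:2-8} together with \eqref{eq:2-3}. Suppose $2m(r)/r\to1$ as $r\to r^*<\infty$. Then $\lambda\to\infty$.

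By \eqref{eq:2-8}, $\lambda'+\mu'=4\pi r e^{2\lambda}(\rho+p)\ge0$. Since $\mu'=-\bar y'\ge0$ and $\mu=C-\bar y$ is bounded by $C$ as long as $\bar y\ge0$, $\mu$ stays bounded.

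First try: show directly from \eqref{eq:1-20} that $\lambda$ cannot blow up. Write $\frac{d}{dr}(r e^{-2\lambda})=1-8\pi r^2\rho$. With $\rho$ bounded, $1-2m/r$ is Lipschitz in $r$ away from $0$, so it can only approach $0$ at some finite $r^*$. Near such a point, \eqref{eq:2-5} gives $\bar y'\approx -\frac{m/r^2+4\pi r p}{1-2m/r}$, whose integral diverges like $\int dr/(1-2m/r)$. That would push $\bar y$ below $0$ before $2m/r$ reaches $1$, since $\bar y\le\kappa$ is finite. Once $\bar y<0$ we are in vacuum, $m$ is frozen, and $2M/r$ is decreasing in $r$, so the bound persists.

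Making this precise: set $\phi=1-2m/r$. Near $r^*$ we have $\phi'=2m/r^2-8\pi r\rho\ge -C$ from one side, and $\phi(r)\le C(r^*-r)$. Hence $\int^{r^*}\phi^{-1}=\infty$. Because the numerator $m/r^2$ is bounded below near $r^*$, this forces $\bar y\to-\infty$, contradicting $\bar y\ge0$ up to $r^*$. So $\bar y$ hits $0$ at some $R_\kappa<r^*$ while $\phi(R_\kappa)>0$.

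\textbf{Conclusion.} Combine the pieces. The solution exists on $[0,R_\kappa]$ with $2m/r<1$. The vacuum continuation on $[R_\kappa,\infty)$ has $2M/r\le 2M/R_\kappa<1$. This gives a global unique solution. If $\bar y$ never vanishes, the same contradiction argument shows $r^*=\infty$ directly.
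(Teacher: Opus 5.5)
Your proof is correct, but it takes a different route from the paper's.

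\textbf{The paper's argument.} The paper never studies $1-2m/r$ directly. It writes $\mu'=4\pi r\ee^{2\lambda}(p+w)$ with $w=r^{-3}\int_0^r s^2\rho\,\dd s$, and notes that $w$ is nonincreasing because $\rho$ is. Combining the TOV identity \eqref{eq:2-8} with $p'=-(\rho+p)\mu'$ gives $\bigl[\ee^{\lambda+\mu}(p+w)\bigr]'=\ee^{\lambda+\mu}w'\le0$. If the maximal radius $\widetilde R$ were finite, then $w\ge w(\widetilde R)>0$ yields $\ee^{\lambda+\mu}\le C/w(\widetilde R)$. Since $\lambda\ge0$ and $\mu\ge\mu_0$, both metric functions stay bounded, and the solution extends.

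\textbf{Your argument.} You rule out a horizon $1-2m/r\to0$ at finite $r^*$ by elementary ODE reasoning. The function $\phi=1-2m/r$ has derivative bounded below, using only $\rho\le g(\kappa)$. Hence $\phi$ vanishes at most linearly, $\int\phi^{-1}\,\dd r$ diverges, and $\bar y\to-\infty$. So the surface is reached before $r^*$, and beyond it $1-2M/r$ is increasing, which is a contradiction. One point deserves a sentence: $\phi$ is Lipschitz near $r^*$ because $\rho$ is bounded there. That is why the only obstruction to extension is $\phi(r)\to0$, rather than merely $\liminf\phi=0$.

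\textbf{Trade-offs.} The paper's monotone quantity is shorter and gives an explicit bound on $\lambda+\mu$ in terms of central data and the mean density. It treats fluid and vacuum uniformly. It needs only that $\rho$ is nonincreasing, not that $\rho$ vanishes outside the star.

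Your argument genuinely uses the liquid structure ($g=0$ for $\bar y<0$) to close the case where $\bar y$ changes sign. It would not close as written for an equation of state where $\bar y\to-\infty$ is allowed. In exchange, you handle explicitly two points the paper's proof passes over: the discontinuity of $g$ at $\bar y=0$, and uniqueness across the surface, via the transversal crossing $\bar y'(R_\kappa)<0$.
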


\begin{proof}
Let $[0,\widetilde R)$ be the maximal interval of the corresponding
solution $\mu$.  Using \eqref{eq:2-3} in \eqref{eq:1-21}, write
\begin{equation}\label{eq:2-9}
 \mu'(r)=4\pi r\ee^{2\lambda(r)}(p(r)+w(r)),
\end{equation}
where
\begin{equation}\label{eq:2-10}
 w(r):=\frac{1}{r^3}\int_0^r s^2\rho(s)\dd s.
\end{equation}
Since $\bar{y}$ decreases by \eqref{eq:2-5} and $g$ increases,
$\rho(r)$ is decreasing.  Therefore
\begin{equation}\label{eq:2-11}
 w'(r)=\frac{\rho(r)}r-
 \frac{3}{r^4}\int_0^r s^2\rho(s)\dd s
 \le\frac{\rho(r)}r-\frac{\rho(r)r^3}{r^4}=0.
\end{equation}
Now differentiate $\ee^{\lambda+\mu}(p+w)$.  Equations \eqref{eq:1-23}, \eqref{eq:2-8}
and \eqref{eq:2-9} give
\begin{align}\label{eq:2-12}
 \bigl[\ee^{\lambda+\mu}(p+w)\bigr]'
 &=\ee^{\lambda+\mu}
 \bigl[(\lambda'+\mu')(p+w)+p'+w'\bigr]\notag\\
 &=\ee^{\lambda+\mu}
 \bigl[4\pi r\ee^{2\lambda}(\rho+p)(p+w)
       -4\pi r\ee^{2\lambda}(\rho+p)(p+w)+w'\bigr]\notag\\
 &=\ee^{\lambda+\mu}w'\le0.
\end{align}
Thus
\begin{equation}\label{eq:2-added-013}
 \ee^{\lambda(r)+\mu(r)}(p(r)+w(r))
 \le\ee^{\lambda(0)+\mu(0)}(p(0)+w(0))=:C.
\end{equation}
If $\widetilde R<\infty$, then monotonicity of $w$ gives
\begin{equation}\label{eq:2-added-014}
 w(r)\ge w(\widetilde R)
 =\frac{1}{\widetilde R^3}
 \int_0^{\widetilde R}s^2\rho(s)\dd s>0.
\end{equation}
Combining \eqref{eq:2-added-013} with \eqref{eq:2-added-014} gives
$\ee^{\lambda+\mu}\le C$.  Since $\lambda\ge0$ by \eqref{eq:2-3} and
$\mu\ge\mu_0$ by \eqref{eq:2-7}, both $\mu$ and $\lambda$ remain bounded on
$[0,\widetilde R)$, which means that the solution \(\mu (r)\) extends beyond \(\widetilde{R}\), and therefore $\widetilde R=\infty$.
\end{proof}

\begin{lemma}
Let $\bar{y}$ solve \eqref{eq:2-5} with $\bar{y}(0)=\kappa>0$.  Then
$\bar{y}_\infty:=\lim_{r\to\infty}\bar{y}(r)$ exists and is strictly negative.
Consequently $\bar{y}$ has a unique zero $R_\kappa<\infty$.
\end{lemma}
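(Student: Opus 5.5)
Since $\bar y'<0$ wherever $\bar y>0$ (the right-hand side of \eqref{eq:2-5} is strictly negative for $r>0$ because $m(r)>0$), $\bar y$ is strictly decreasing on $(0,\infty)$. Hence the limit $\bar y_\infty\in[-\infty,\kappa)$ exists, and the task reduces to two claims: $\bar y$ must actually become negative, and once negative it stays bounded below.

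\emph{Step 1: $\bar y$ reaches zero at a finite radius.} Suppose instead that $\bar y(r)>0$ for all $r$. Then $\rho(r)=g(\bar y(r))\ge\rho_0$ everywhere, so $m(r)\ge\frac{4\pi}{3}\rho_0 r^3$. This forces $2m(r)/r\ge\frac{8\pi}{3}\rho_0r^2$, which exceeds $1$ for $r$ large. That contradicts \eqref{eq:2-3}, which requires $\ee^{-2\lambda}=1-2m/r>0$ along the global solution constructed in the previous lemma, where $\lambda$ stays finite. Therefore there is a first zero $R_\kappa<\infty$. Since $\bar y'(R_\kappa)<0$ (indeed $m(R_\kappa)>0$), the function $\bar y$ crosses strictly into negative values, and by monotonicity this zero is unique.

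\emph{Step 2: the limit is finite and strictly negative.} For $r>R_\kappa$ we have $\rho=p=0$, so $m(r)\equiv M:=m(R_\kappa)$ and \eqref{eq:2-5} reduces to the Schwarzschild equation $\bar y'=-\frac{M}{r^2(1-2M/r)}$. We need $R_\kappa>2M$. This follows because $1-2m(r)/r>0$ holds for every $r$ on the global solution, in particular at $r=R_\kappa$, and the quantity is continuous there. Integrating on $(R_\kappa,\infty)$ then gives
\[
\bar y_\infty=\bar y(R_\kappa)+\tfrac12\log\bigl(1-2M/R_\kappa\bigr)=\tfrac12\log\bigl(1-2M/R_\kappa\bigr),
\]
which is finite and strictly negative since $0<2M/R_\kappa<1$. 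Equivalently, $\bar y(r)=\frac12\log(1-2M/r)-\frac12\log(1-2M/R_\kappa)\cdot 0+\dots$ reduces to the exact Schwarzschild profile $\tfrac12\log(1-2M/r)$ shifted so that $\bar y(R_\kappa)=0$.

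\emph{Main obstacle.} The delicate point is justifying $2m/r<1$ uniformly. In the previous lemma, boundedness of $\lambda$ was derived under the hypothesis that the maximal interval of existence is finite. What I need instead is that, for all $r$ in $[0,\infty)$, the solution keeps $1-2m/r>0$. My plan is to argue that the maximal solution is, by construction, defined only on the region where this quantity is positive. The bound $\ee^{\lambda+\mu}\le C$ from \eqref{eq:2-added-013} then keeps $\lambda$ finite on every compact interval, which rules out $2m/r\to1$. With this positivity in hand, the contradiction in Step 1 is legitimate.
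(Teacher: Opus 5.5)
Your proof is correct. It uses the same three ingredients as the paper: monotonicity of $\bar y$, the lower bound $m(r)\ge\frac{4\pi}{3}\rho_0 r^3$ while $\bar y>0$, and explicit integration of the exterior Schwarzschild equation.

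The one genuine difference is how you rule out $\bar y>0$ for all $r$. You feed the mass bound into the no-horizon condition $2m/r<1$. The paper instead feeds it back into \eqref{eq:2-5}: it uses $(1-2m/r)^{-1}\ge1$ and drops the pressure term to get $\bar y'\le-\frac{4\pi\rho_0}{3}r$, hence $\bar y(r)\le\kappa-\frac{2\pi\rho_0}{3}r^2$ as in \eqref{eq:2-added-017}.

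Both arguments rest on the same fact, $1-2m/r>0$ on $[0,\infty)$. Your resolution of the \emph{main obstacle} is right. Equation \eqref{eq:2-5} is undefined where $1-2m/r=0$, and this quantity is continuous and equals $1$ at the center. So positivity is already part of the conclusion of the global existence lemma.

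What the paper's route buys is a quantitative bound. At $r=R_\kappa$ it gives $R_\kappa^2\le 3\kappa/(2\pi\rho_0)$, which is reused in \eqref{eq:4-added-028} to get $R_\kappa\to0$ as $\kappa\to0$ in Proposition~\ref{prop:small-redshift}. Your horizon argument only yields the $\kappa$-independent bound $R_\kappa^2<3/(8\pi\rho_0)$.

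Conversely, your ordering is a little more direct. You find the zero first and then integrate outside, whereas the paper uses the exterior computation to exclude $\bar y_\infty=-\infty$ and then separately excludes $\bar y_\infty\ge0$. Your ordering gives $\bar y_\infty=\frac12\log(1-2M/R_\kappa)$ immediately, which is the normalization constant appearing in \eqref{eq:2-added-053}.

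There is a small slip in your \emph{Equivalently} sentence: the exterior profile has the wrong sign. By \eqref{eq:2-added-015}, $\bar y(r)=\frac12\log(1-2M/R_\kappa)-\frac12\log(1-2M/r)$, i.e.\ minus the Schwarzschild $\mu=\frac12\log(1-2M/r)$ plus a constant. The expression you wrote would be increasing in $r$. Your preceding computation of $\bar y_\infty$ is correct, so this does not affect the proof.
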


\begin{proof}
Equation \eqref{eq:2-5} shows that $\bar{y}$ is strictly decreasing, so its limit is a
finite constant or $-\infty$.  Suppose first that $\bar{y}_\infty=-\infty$.
There is an $r_0$ with $\bar{y}(r_0)=0$ and $\bar{y}(r)<0$ for $r>r_0$.  On this
vacuum interval $\rho=p=0$ and $m(r)=m(r_0)=:M$.  Equation \eqref{eq:2-5} becomes
\begin{equation}\label{eq:2-13}
 \bar{y}'(r)=-\frac{M}{r(r-2M)},
 \qquad \bar{y}(r_0)=0.
\end{equation}
Because $r_0-2M>0$, integration gives
\begin{equation}\label{eq:2-added-015}
 \bar{y}(r)=\frac12\log\frac{r_0-2M}{r_0}
      -\frac12\log\frac{r-2M}{r}.
\end{equation}
The right-hand side has a finite limit as $r\to\infty$, contradicting
$\bar{y}_\infty=-\infty$.

It remains to exclude $\bar{y}_\infty\ge0$.  Since $g$ is increasing and
$g(0)=\rho_0$, we have $\rho(r)=g(\bar{y}(r))\ge\rho_0$.  Therefore
\begin{equation}\label{eq:2-added-016}
 m(r)=4\pi\int_0^r s^2\rho(s)\dd s
 \ge\frac{4\pi\rho_0}{3}r^3.
\end{equation}
Using only the mass term in \eqref{eq:2-5} and the lower bound
\eqref{eq:2-added-016},
\begin{equation}\label{eq:2-14}
 \bar{y}'(r)\le-\frac{m(r)}{r^2}
 \le-\frac{4\pi\rho_0}{3}r.
\end{equation}
After integration,
\begin{equation}\label{eq:2-added-017}
 \bar{y}(r)\le \bar{y}(0)-\frac{2\pi\rho_0}{3}r^2\longrightarrow-\infty,
\end{equation}
again a contradiction.  Hence $\bar{y}_\infty<0$.  Strict monotonicity then
gives a unique zero $R_\kappa<\infty$ and \eqref{eq:2-1}--\eqref{eq:2-2}
give
\begin{equation}\label{eq:2-added-018}
 p_\kappa(R_\kappa)=0,
 \quad \rho_\kappa(R_\kappa^-)=\rho_0,
 \quad p_\kappa=\rho_\kappa=0\quad(r>R_\kappa).
\end{equation}
\end{proof}

Since $P$ is strictly increasing, let
$S:=P^{-1}:[0,\infty)\to[\rho_0,\infty)$.  The pressure form of the TOV
equation is
\begin{equation}\label{eq:2-15}
 p'=-\frac{S(p)+p}
 {1-\dfrac{8\pi}{r}\int_0^r s^2S(p(s))\dd s}
 \left(\frac{4\pi}{r^2}\int_0^r s^2S(p(s))\dd s+4\pi rp\right).
\end{equation}
The assumptions on $P$ imply
\begin{equation}\label{eq:2-added-019}
 \left|S(p)-\frac{p}{\csq}\right|\le\frac{C_P}{\csq},
 \quad \frac{S(p)}p\to\frac1{\csq},
 \quad S'(p)=\frac1{P'(S(p))}\to\frac1{\csq},
 \quad S''(p)=-\frac{P''(S(p))}{P'(S(p))^3}\to0.
\end{equation}
Here the limit is understood as $p\to\infty$.
The massless comparison replaces $S$ by $S_*(p)=p/\csq$ and writes
\begin{equation}\label{eq:2-16}
 p_*'=-\frac{(1+\csq)p_*/\csq}
 {1-\dfrac{8\pi}{\csq r}\int_0^r s^2p_*(s)\dd s}
 \left(\frac{4\pi}{\csq r^2}\int_0^r s^2p_*(s)\dd s
       +4\pi rp_*\right).
\end{equation}
For the high-pressure analysis, we write
$p_\kappa (0)=p_\kappa^*(0)=\ee^{4\kappa}$.

\begin{remark}\label{rem:pressure-parameter}
We reparametrize the steady-state
family by the central pressure and retain the letter $\kappa$, that is
$p_\kappa(0)=p_\kappa^*(0)=\ee^{4\kappa}$.
The enthalpy and pressure are strictly increasing functions of one
another, so large $\kappa$ in this parametrization is equivalent to
large central redshift.  This is a change of parameter, not the
identity $\bar{y}_\kappa(0)=\kappa$ for the pressure-parametrized family.
The small-redshift argument in Proposition \ref{prop:small-redshift} uses the original
convention $\bar{y}_\kappa(0)=\kappa$, and Proposition \ref{prop:large-redshift} uses the present
pressure parametrization.
\end{remark}

\begin{lemma}\label{lem:pressure-comparison}
There is $C>0$ such that, for every $\kappa \ge0$ and $r\ge0$,
\begin{equation}\label{eq:2-added-021}
 |p_\kappa (r)-p_\kappa^*(r)|
 \le C(\ee^{6\kappa}r^2+\ee^{10\kappa}r^4)
 \exp\!\left(C(\ee^{4\kappa}r^2+\ee^{8\kappa}r^4)\right).
\end{equation}
\end{lemma}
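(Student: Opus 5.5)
The plan is a Gronwall argument on the integrated forms of \eqref{eq:2-15} and \eqref{eq:2-16}. I take $p_\kappa$ and $p_\kappa^*$ on the common interval where both are positive; past their surfaces I extend them by $0$, which changes nothing below. Both start from $p_\kappa(0)=p_\kappa^*(0)=\ee^{4\kappa}$.

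Write $m(r)=4\pi\int_0^r s^2S(p)\dd s$ and $m_*(r)=4\pi\int_0^r s^2p_*/\csq\dd s$. The a priori bounds I need are:
\begin{enumerate}[label=(\alph*)]
\item both pressures are decreasing, so $0\le p_\kappa,p_\kappa^*\le \ee^{4\kappa}$;
\item by \eqref{eq:2-added-019}, $S(p)\le C(1+p)\le C\ee^{4\kappa}$ for $\kappa\ge0$, hence $m,m_*\le C\ee^{4\kappa}r^3$;
\item the bracket satisfies $m/r^2+4\pi rp\le C\ee^{4\kappa}r$, and the same holds for the starred quantities;
\item the denominators obey $1-2m/r\ge 1/9$ and $1-2m_*/r\ge1/9$.
\end{enumerate}
For (d) I would invoke the Buchdahl inequality $2m/r\le 8/9$. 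It is valid for static TOV solutions with nonincreasing energy density, and both densities $S(p_\kappa)$ and $p_\kappa^*/\csq$ are nonincreasing since the pressures are.

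I expect this uniform lower bound on the denominators, independent of $\kappa$, to be the main obstacle. If Buchdahl's argument cannot be quoted directly, I would reprove it from the monotone quantity $\ee^{\lambda+\mu}(p+w)$ in \eqref{eq:2-12}, which encodes the same mechanism.

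Next I split $D:=p_\kappa-p_\kappa^*$ and write $D'=F_S(r,p_\kappa)-F_*(r,p_\kappa^*)$ as a source part plus a Lipschitz part: $[F_S(r,p_\kappa)-F_*(r,p_\kappa)]+[F_*(r,p_\kappa)-F_*(r,p_\kappa^*)]$.

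The source part uses $|S(p)-p/\csq|\le C_P/\csq$. This gives three contributions.
\begin{itemize}
\item The prefactor $S+p$ changes by $O(1)$, against a bracket of size $\ee^{4\kappa}r$.
\item The mass term $\frac{4\pi}{r^2}\int_0^r s^2(\cdot)$ changes by $O(r)$, against a prefactor of size $\ee^{4\kappa}$.
\item The denominator changes by $O(r^2)$, against $\ee^{4\kappa}\cdot\ee^{4\kappa}r$.
\end{itemize}
Hence the source is bounded by $C(\ee^{4\kappa}r+\ee^{8\kappa}r^3)$, and after integration by $C(\ee^{4\kappa}r^2+\ee^{8\kappa}r^4)\le C(\ee^{6\kappa}r^2+\ee^{10\kappa}r^4)$, using $\kappa\ge0$.

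For the Lipschitz part I set $\Delta(r)=\sup_{[0,r]}|D|$ and use (a)--(d). Varying $p$ in the prefactor and in the $4\pi rp$ term costs $C\ee^{4\kappa}r|D|$. Varying it inside $\frac{1}{r^2}\int_0^r s^2p\dd s$ costs $C\ee^{4\kappa}r\Delta$. Varying it inside the denominator integral costs $C\ee^{4\kappa}\cdot\ee^{4\kappa}r\cdot r^2\Delta$.

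This yields
\[
\Delta(r)\le C(\ee^{6\kappa}r^2+\ee^{10\kappa}r^4)+C\int_0^r(\ee^{4\kappa}s+\ee^{8\kappa}s^3)\Delta(s)\dd s .
\]
The right-hand side is nondecreasing in $r$, so passing to the supremum is legitimate. Gronwall's inequality then gives the factor $\exp\bigl(C(\ee^{4\kappa}r^2+\ee^{8\kappa}r^4)\bigr)$ times the nondecreasing source, which is exactly \eqref{eq:2-added-021}.

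Beyond the first vanishing radius, both equations reduce to the vacuum equation or to $p\equiv0$, and the same estimates persist with the cut-off pressures. The constant $C$ depends only on $C_P$, $\csq$ and the bounds in \eqref{eq:2-added-019}.
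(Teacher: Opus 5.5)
Your strategy is the paper's: Buchdahl's bound $1/(1-2m/r)\le 9$ for the two genuine solutions, a telescoping comparison of the right-hand sides in which $|S(p)-p/\csq|\le C_P/\csq$ is the only source, and Gronwall on the running maximum. The paper runs this after rescaling $r=\ee^{-2\kappa}\tau$, $p=\ee^{4\kappa}\sigma$, so that $\kappa$ enters only through the remainder $G_\alpha-G_*$ with $\alpha=\ee^{-2\kappa}$. Working directly in $r$ is equivalent. Your bookkeeping even gives the sharper source $C(\ee^{4\kappa}r^2+\ee^{8\kappa}r^4)$: in fact $|G_\alpha-G_*|\le C\alpha^2$, whereas the paper only uses $C\alpha$.

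One step fails as written, however. Passing through the intermediate $F_*(r,p_\kappa)$ introduces the hybrid denominator $1-\frac{8\pi}{\csq r}\int_0^r s^2p_\kappa(s)\dd s$ into both your source part and your Lipschitz part. Your item (d) does not cover it, because Buchdahl's inequality concerns actual solutions and $p_\kappa$ does not solve the starred equation. The only available comparison, $p/\csq\le S(p)+C_P/\csq$, bounds the hybrid ratio by $8/9+Cr^2$, and the ratio can genuinely reach $1$. Take an admissible $P$ with $P(\rho)=\rho-\rho_0$ on the density range of the star and small $\csq$. Near the center the hybrid ratio is about $8\pi p_\kappa(0)r^2/(3\csq)$, so it equals $1$ inside the star, and $F_*(r,p_\kappa)$ is undefined there.

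The remedy is to skip the intermediate state and telescope the true quantities, as the paper does. Write the right-hand sides as $-A_SB_S/D_S$ and $-A_*B_*/D_*$. Then use $|1/D_S-1/D_*|\le 81|D_S-D_*|$ together with $|A_S-A_*|\le C(1+\Delta)$, $|B_S-B_*|\le Cr(1+\Delta)$ and $|D_S-D_*|\le Cr^2(1+\Delta)$. This produces the same integral inequality for $\Delta$, and the rest of your argument goes through.

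Separately, $p_\kappa^*$ never vanishes (it decays like $r^{-2}$), so the starred equation does not become the vacuum one beyond $R_\kappa$. For $r>R_\kappa$, use $|p_\kappa-p_\kappa^*|(r)=p_\kappa^*(r)\le p_\kappa^*(R_\kappa)$ together with the monotonicity in $r$ of the right-hand side of \eqref{eq:2-added-021}.
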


\begin{proof}
Let $\alpha=\ee^{-2\kappa}$ and introduce
\begin{equation}\label{eq:2-added-022}
 p_\kappa (r)=\alpha^{-2}\sigma_\alpha(\tau),\quad
 p_\kappa^*(r)=\alpha^{-2}\sigma_*(\tau),\quad r=\alpha\tau.
\end{equation}
Define
\begin{equation}\label{eq:2-added-023}
 G_\alpha(z):=\alpha^2S(\alpha^{-2}z),
 \qquad G_*(z):=\frac z{\csq}.
\end{equation}
The inverse pressure remainder $S(p)-p/\csq$ is bounded by
\eqref{eq:2-added-019}, so
\begin{equation}\label{eq:2-added-024}
 |G_\alpha(z)-G_*(z)|
 =\alpha^2|S(\alpha^{-2}z)-\alpha^{-2}z/\csq|\le C\alpha.
\end{equation}
Also $S'$ is globally bounded, so
$|G_\alpha(z_1)-G_\alpha(z_2)|\le C|z_1-z_2|$ uniformly in $\alpha$. Substitution of \eqref{eq:2-added-022}--\eqref{eq:2-added-023}
in \eqref{eq:2-15} gives
\begin{equation}\label{eq:2-17}
 \sigma_\alpha'
 =-\frac{G_\alpha(\sigma_\alpha)+\sigma_\alpha}
 {1-\dfrac{8\pi}{\tau}\int_0^\tau
 s^2G_\alpha(\sigma_\alpha(s))\dd s}
 \left(\frac{4\pi}{\tau^2}\int_0^\tau
 s^2G_\alpha(\sigma_\alpha(s))\dd s
 +4\pi\tau\sigma_\alpha\right),
 \quad \sigma_\alpha(0)=1.
\end{equation}
The limiting rescaled solution satisfies the rescaled form of \eqref{eq:2-16},
\begin{equation}\label{eq:2-18}
 \sigma_*'
 =-\frac{(1+\csq)\sigma_*/\csq}
 {1-\dfrac{8\pi}{\csq\tau}\int_0^\tau s^2\sigma_*(s)\dd s}
 \left(\frac{4\pi}{\csq\tau^2}\int_0^\tau s^2\sigma_*(s)\dd s
 +4\pi\tau\sigma_*\right),
 \quad \sigma_*(0)=1.
\end{equation}
We note that $\sigma_\alpha$ and $\sigma_*$ are decreasing, and hence,
for $\tau\ge0$,
\begin{equation}\label{eq:2-added-025}
 0\le\sigma_\alpha(\tau),\quad\sigma_*(\tau)\le1.
\end{equation}
By Buchdahl's inequality (see \cite{Buchdahl1959}), we have
\begin{equation}\label{eq:2-added-026}
 \frac{1}{1-\dfrac{8\pi}{\tau}\int_0^\tau
 s^2G_\alpha(\sigma_\alpha(s))\dd s},\quad
 \frac{1}{1-\dfrac{8\pi}{\csq\tau}\int_0^\tau
 s^2\sigma_*(s)\dd s}\le9,\qquad \tau>0.
\end{equation}
Let $x(\tau)=\max_{0\le s\le\tau}
|\sigma_\alpha(s)-\sigma_*(s)|$.  To display every comparison term, we write
\begin{equation}\label{eq:2-added-027}
\begin{aligned}
 I_\alpha&=\int_0^\tau s^2G_\alpha(\sigma_\alpha(s))\dd s,
 &D_\alpha&=1-\frac{8\pi}{\tau}I_\alpha,\\
 A_\alpha&=G_\alpha(\sigma_\alpha)+\sigma_\alpha,
 &B_\alpha&=\frac{4\pi}{\tau^2}I_\alpha+4\pi\tau\sigma_\alpha,
\end{aligned}
\end{equation}
and define the starred quantities analogously.  The remainder estimate
\eqref{eq:2-added-024} and the uniform Lipschitz bound give
\begin{equation}\label{eq:2-added-028}
\begin{aligned}
 |A_\alpha-A_*|&\le C(\alpha+x),
 &|I_\alpha-I_*|&\le C\tau^3(\alpha+x),\\
 |D_\alpha-D_*|&\le C\tau^2(\alpha+x),
 &|B_\alpha-B_*|&\le C\tau(\alpha+x).
\end{aligned}
\end{equation}
Furthermore $|A_\alpha|+|A_*|\le C$ and
$|B_\alpha|+|B_*|\le C\tau$.
The cubic term below comes from the denominator difference, not from
this direct bound for $B_\alpha$.  Since
$\sigma_\alpha'=-A_\alpha B_\alpha/D_\alpha$, the bounds
\eqref{eq:2-added-026} and \eqref{eq:2-added-028} give
\begin{equation}\label{eq:2-added-029}
\begin{aligned}
 |\sigma_\alpha'-\sigma_*'|
 &\le\frac{|A_\alpha-A_*|\,|B_\alpha|}{D_\alpha}
 +\frac{|A_*|\,|B_\alpha-B_*|}{D_\alpha}\\
 &\quad+|A_*B_*|
 \left|\frac1{D_\alpha}-\frac1{D_*}\right|\\
 &\le C(\tau+\tau^3)(\alpha+x(\tau)).
\end{aligned}
\end{equation}
Integration of \eqref{eq:2-added-029} and Gronwall's inequality give
\begin{equation}\label{eq:2-added-030}
 x(\tau)\le C\alpha(\tau^2+\tau^4)
 \exp\!\left(C(\tau^2+\tau^4)\right).
\end{equation}
Returning to the original variables in
\eqref{eq:2-added-022} and using
$|p_\kappa -p_\kappa^*|=\alpha^{-2}|\sigma_\alpha-\sigma_*|$ and
$\tau=r/\alpha$, we obtain
\begin{equation}\label{eq:2-added-031}
\begin{aligned}
 |p_\kappa -p_\kappa^*|
 &\le C(\alpha^{-3}r^2+\alpha^{-5}r^4)
 \exp\!\left(C(\alpha^{-2}r^2+\alpha^{-4}r^4)\right),
\end{aligned}
\end{equation}
which is the stated estimate.
\end{proof}

\begin{corollary}
The limiting solutions satisfy
\begin{equation}\label{eq:2-added-032}
 p_\kappa^*(r)=\ee^{4\kappa}p_0^*(\ee^{2\kappa}r),\qquad r\ge0.
\end{equation}
\end{corollary}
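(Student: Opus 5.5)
We prove the scaling identity by checking that the right-hand side solves the same initial value problem as $p^*_\kappa$ and then invoking uniqueness. Set $\alpha=\ee^{-2\kappa}$ and $q(r):=\alpha^{-2}p_0^*(\alpha^{-1}r)=\ee^{4\kappa}p_0^*(\ee^{2\kappa}r)$. Its central value is $q(0)=\ee^{4\kappa}p_0^*(0)=\ee^{4\kappa}$, because $p_0^*(0)=\ee^{0}=1$. This matches the normalization $p_\kappa^*(0)=\ee^{4\kappa}$ fixed in Remark \ref{rem:pressure-parameter}.

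The key observation is that the massless equation \eqref{eq:2-16} is invariant under the scaling $p\mapsto \alpha^{-2}p(\cdot/\alpha)$. The reason is that $S_*(p)=p/\csq$ is exactly linear, so no remainder term like the one controlled in \eqref{eq:2-added-024} appears.

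To verify the invariance, write $\tau=r/\alpha$ and substitute $s=\alpha\sigma$ in the integrals. This gives
\[
\frac{8\pi}{\csq r}\int_0^r s^2 q(s)\dd s=\frac{8\pi}{\csq \tau}\int_0^\tau \sigma^2 p_0^*(\sigma)\dd\sigma,
\]
so the denominator is scale-invariant. Similarly,
\[
\frac{4\pi}{\csq r^2}\int_0^r s^2q\dd s+4\pi r q=\alpha^{-1}\Bigl(\frac{4\pi}{\csq\tau^2}\int_0^\tau\sigma^2p_0^*\dd\sigma+4\pi\tau p_0^*(\tau)\Bigr).
\]
The prefactor $(1+\csq)q/\csq$ contributes a factor $\alpha^{-2}$, so the right-hand side of \eqref{eq:2-16} evaluated at $q$ equals $\alpha^{-3}$ times its value at $p_0^*$, taken at $\tau$. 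On the left, $q'(r)=\alpha^{-3}(p_0^*)'(\tau)$. Hence $q$ solves \eqref{eq:2-16}. Equivalently, in the notation of Lemma \ref{lem:pressure-comparison}, the rescaled function $\sigma_*$ solves \eqref{eq:2-18}, which contains no $\alpha$. So for every $\kappa$, the rescaled limiting solution is the same $\kappa$-independent function, equal to $p_0^*$.

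It remains to show that $p_\kappa^*$ is uniquely determined by \eqref{eq:2-16} together with its central value. We expect this uniqueness to be the only point requiring care, because the equation is singular at $r=0$ through the terms $1/r$ and $1/r^2$. We handle it with a contraction argument like the one in the first lemma of Section \ref{sec2}. For two solutions on $[0,\delta]$, the differences of the mass-type integrals are $O(s^3)\|p_1-p_2\|_\infty$. The quotient terms are therefore $O(s)\|p_1-p_2\|_\infty$, and so is the $4\pi r p$ term, while the denominators stay bounded below near the center. This gives the contraction estimate $\|p_1-p_2\|\le C\delta^2\|p_1-p_2\|$, hence local uniqueness. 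Away from $0$ the equation is regular and locally Lipschitz as long as the denominator stays positive, which is guaranteed by the Buchdahl bound \eqref{eq:2-added-026}. Standard ODE uniqueness then propagates the identity to all $r\ge0$.

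Since $q$ and $p_\kappa^*$ solve the same initial value problem, we conclude $q=p_\kappa^*$, which is \eqref{eq:2-added-032}.
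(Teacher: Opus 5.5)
Your proof is correct and follows the paper's route: substitute $\ee^{4\kappa}p_0^*(\ee^{2\kappa}r)$ into \eqref{eq:2-16}, check that the central value is $\ee^{4\kappa}$, and conclude by uniqueness. Your explicit scaling computation and your contraction argument at the singular center fill in details that the paper states without proof.
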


\begin{proof}
Direct substitution of
$\widetilde p(r)=\ee^{4\kappa}p_0^*(\ee^{2\kappa}r)$ into \eqref{eq:2-16} shows that it
has central value $\ee^{4\kappa}$ and solves the same equation as $p_\kappa^*$.
Uniqueness proves the identity.
\end{proof}
For $p=\csq\rho$, set
$w_1=r^2\rho$, $w_2=m/r$ and $\tau=\log r$. \eqref{eq:2-16} can be written as a planar, autonomous dynamical system analogous to that in \cite{HadzicLinRein2021}
\begin{equation}\label{eq:2-19}
 \frac{\dd w_1}{\dd\tau}
 =\frac{w_1}{1-2w_2}
 \left(2-\frac{1+5\csq}{\csq}w_2
       -4\pi(1+\csq)w_1\right),
\end{equation}
\begin{equation}\label{eq:2-20}
 \frac{\dd w_2}{\dd\tau}=4\pi w_1-w_2.
\end{equation}
Let
\begin{equation}\label{eq:2-added-033}
 D_*=1+6\csq+(\csq)^2,
 \quad Z_*=(\csq/(2\pi D_*),2\csq/D_*),
 \quad \eta_*=(1+3\csq)/(2(1+\csq)).
\end{equation}

\begin{lemma}\label{lem:limiting-orbit}
Let $\rho_{*,0}=p_0^*/\csq$ and
$m_0^*(r)=4\pi\int_0^rs^2\rho_{*,0}(s)\dd s$.  For every
$0<\gamma<\eta_*$,
\begin{equation}\label{eq:2-added-034}
 \left|r^2\rho_{*,0}-\frac{\csq}{2\pi D_*}\right|
 +\left|\frac{m_0^*}r-\frac{2\csq}{D_*}\right|
 \le C_\gamma r^{-\gamma}.
\end{equation}
Equivalently,
$|r^2p_0^*-(\csq)^2/(2\pi D_*)|\le C_\gamma r^{-\gamma}$.
\end{lemma}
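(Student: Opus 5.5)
We will analyse the planar autonomous system \eqref{eq:2-19}--\eqref{eq:2-20} in the variables $w_1=r^2\rho_{*,0}$, $w_2=m_0^*/r$, $\tau=\log r$, along the orbit generated by the massless solution with $p_0^*(0)=1$. The conclusion is the statement that this orbit converges to the interior equilibrium $Z_*$ at exponential rate $\ee^{-\gamma\tau}=r^{-\gamma}$, where $\eta_*$ is the real part of the eigenvalues of the linearization at $Z_*$. The steps, in order:

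\begin{enumerate}[label=(\alph*),leftmargin=2em]
\item \textbf{Equilibria.} Setting $w_2=4\pi w_1$ and $2-\frac{1+5\csq}{\csq}w_2-4\pi(1+\csq)w_1=0$ gives $w_2\bigl(\frac{1+5\csq}{\csq}+1+\csq\bigr)=2$. Hence $w_2=2\csq/D_*$ and $w_1=\csq/(2\pi D_*)$, which is $Z_*$. The only other equilibrium in the closed quadrant is $(0,0)$.

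\item \textbf{Linearization at $Z_*$.} Let $A$ denote the Jacobian there. Using that the bracket vanishes at $Z_*$,
\[
A=\begin{pmatrix} -\dfrac{4\pi(1+\csq)w_1}{1-2w_2} & -\dfrac{(1+5\csq)w_1}{\csq(1-2w_2)}\\[2mm] 4\pi & -1\end{pmatrix}.
\]
We compute $\operatorname{tr}A$ and $\det A$ explicitly. With $1-2w_2=(1+\csq)^2/D_*$ we get $\operatorname{tr}A=-1-\frac{2\csq}{1+\csq}=-2\eta_*$. We then check that $\det A>0$ and that the discriminant $\operatorname{tr}^2-4\det<0$. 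This gives complex eigenvalues $-\eta_*\pm i\omega$, so $Z_*$ is a stable focus with decay rate exactly $\eta_*$. Consequently any trajectory converging to $Z_*$ satisfies $|Z(\tau)-Z_*|\le C_\gamma\ee^{-\gamma\tau}$ for every $\gamma<\eta_*$. The loss of $\gamma<\eta_*$ absorbs polynomial factors and the nonlinear terms, via a standard Gronwall argument in a neighborhood where the linear flow contracts like $\ee^{-(\eta_*-\epsilon)\tau}$.

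\item \textbf{Behaviour at $\tau\to-\infty$.} Since $\rho_{*,0}(0)=1/\csq$, we have $w_1\sim r^2/\csq$ and $w_2\sim\frac{4\pi}{3\csq}r^2$ as $r\to0$. The orbit therefore emanates from $(0,0)$ along the unstable direction $w_2=\frac{4\pi}3w_1$.

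\item \textbf{Global convergence.} We show the orbit stays in a compact region $K$ of the open quadrant with $w_2<1/2$. Here $w_1>0$ because $\rho_{*,0}>0$ for the massless equation (the density never vanishes, as $p_*'\propto -p_*$). Boundedness of $w_2$ away from $1/2$ follows from Buchdahl \eqref{eq:2-added-026}, which gives $1-2w_2\ge 1/9$. A bound on $w_1$ follows from the second equation, or from the TOV monotonicity in the proof of the extension lemma. The $\omega$-limit set is then nonempty and compact, and we apply Poincar\'e--Bendixson to it.

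To exclude periodic orbits we use Dulac's criterion with a weight such as $B=\frac{1-2w_2}{w_1w_2}$, or a similar power weight. For this $B$ we compute $\partial_{w_1}(BF_1)+\partial_{w_2}(BF_2)$ and aim to show it has a fixed negative sign in the quadrant. We also rule out homoclinic or heteroclinic returns to $(0,0)$. These are impossible since $(0,0)$ is a hyperbolic source, with eigenvalues $2$ and $-1$ after using $1-2w_2\approx1$; thus only its unstable manifold leaves it and no orbit can enter it. Hence the $\omega$-limit set is $\{Z_*\}$, and step (b) gives the rate.
\end{enumerate}

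Finally, $r^2p_0^*=\csq w_1$ gives the equivalent form of the estimate. For $r\le1$ the bound is trivial, since both sides are bounded.

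The main obstacle is step (d): finding a Dulac function valid for all $\csq\in(0,1]$, which excludes limit cycles. If a direct weight fails, we would follow the Lyapunov-type argument of \cite{HadzicLinRein2021}, adapted with $\csq$-dependent coefficients. The discriminant sign check in (b) is routine algebra in $\csq$.
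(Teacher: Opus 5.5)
Your outline follows the paper's proof: the planar system \eqref{eq:2-19}--\eqref{eq:2-20}, Buchdahl to keep $w_2\le 4/9$, Dulac plus Poincar\'e--Bendixson to identify the $\omega$-limit set, and the linearization at $Z_*$ followed by Gronwall. Your trace, determinant and eigenvalues $-\eta_*\pm i\omega$ agree with \eqref{eq:2-added-038}--\eqref{eq:2-added-039}. However, step (d), which you flag yourself, does not go through as written.

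\textbf{The Dulac weight.} The weight you propose fails. For $B=(1-2w_2)/(w_1w_2)$ one gets
\[
 \partial_{w_1}(BF_1)+\partial_{w_2}(BF_2)
 =\frac{2}{w_1}-\frac{4\pi(1+\csq)}{w_2}-\frac{4\pi}{w_2^2}.
\]
This is negative at $Z_*$ but positive wherever $w_1<w_2^2/(6\pi)$. It therefore has no fixed sign on the strip $\{w_1>0,\ w_2<1/2\}$, nor on the paper's invariant triangle $\{0<w_1\le w_2<4/9\}$, and you have no finer localization of a possible cycle. The missing observation is that the simpler weight $B=1/w_1$ works for every $\csq\in(0,1]$:
\[
 \operatorname{div}\Bigl(\frac{F}{w_1}\Bigr)=-\frac{4\pi(1+\csq)}{1-2w_2}-\frac1{w_1}<0
\]
on the simply connected strip $\{w_1>0,\ w_2<1/2\}$. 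This is exactly \eqref{eq:2-added-037}, and no Lyapunov argument from \cite{HadzicLinRein2021} is needed.

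\textbf{The origin.} Your treatment of the origin is also wrong. With eigenvalues $2$ and $-1$ the origin is a saddle, not a source. Its stable manifold is the invariant axis $\{w_1=0\}$, on which $\dot w_2=-w_2$. So orbits do enter it; they just cannot do so from $w_1>0$.

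\textbf{The boundary axis.} Pointwise positivity of $w_1$ does not place the orbit in a compact subset of the open quadrant. You must therefore show separately that the $\omega$-limit set avoids $\{w_1=0\}$. The paper's argument has two parts:
\begin{enumerate}[label=(\roman*),leftmargin=2em]
\item A point $(0,w_2)$ with $w_2\ne0$ cannot lie in the compact invariant $\omega$-limit set, because its backward orbit $w_2\ee^{-\tau}$ is unbounded.
\item The origin cannot lie there either. The orbit does not converge to it, since the orbit is off the axis. A saddle contained in an $\omega$-limit set that is not the saddle itself forces points of its stable manifold, i.e.\ of the axis, into that set, contradicting (i).
\end{enumerate}
Only with this and the correct Dulac weight does Poincar\'e--Bendixson leave $\{Z_*\}$.

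\textbf{The bound on $w_1$.} The upper bound on $w_1$ does not follow from the second equation. Use instead the monotonicity of $\rho_{*,0}$, which gives $m_0^*\ge\frac{4\pi}{3}r^3\rho_{*,0}$, i.e.\ $w_1\le\frac{3}{4\pi}w_2\le\frac{1}{3\pi}$. This is a slightly cleaner substitute for the paper's flux computation on the edge $w_1=w_2$ of its triangle.
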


\begin{proof}
Denote the right-hand side of \eqref{eq:2-19}--\eqref{eq:2-20} by $F(w)$.  Its physical
equilibria are $(0,0)$ and $Z_*$.  At the origin,
\begin{equation}\label{eq:2-added-035}
 DF(0,0)=\begin{pmatrix}2&0\\4\pi&-1\end{pmatrix},
\end{equation}
the eigenvalues are $2,-1$ and the unstable direction is $(3,4\pi)$.
Regularity at the center places the physical orbit on the positive branch
of this unstable manifold. Consider $\mathcal D=\{0<w_1\le w_2<4/9\}$.  The orbit cannot cross
$w_1=0$ by uniqueness and cannot cross $w_2=4/9$ by Buchdahl's
inequality.  On $w_2=w_1=w$,
\begin{equation}\label{eq:2-added-036}
 (-1,1)\cdot F(w,w)=\frac{w}{1-2w}
 \left[4\pi-3+
 \left(\frac1{\csq}+7+4\pi(\csq-1)\right)w\right]>0.
\end{equation}
Thus the physical stellar trajectory stays in this bounded triangle. According to
Dulac's negative criterion,
\begin{equation}\label{eq:2-added-037}
 \operatorname{div}\left(\frac{F}{w_1}\right)
 =-\frac{4\pi(1+\csq)}{1-2w_2}-\frac1{w_1}<0,
\end{equation}
so there are no periodic orbits.
The bounded orbit stays away from $w_2=1/2$ and has a compact invariant
$\omega$-limit set.  A nonzero point on $w_1=0$ cannot belong to it, since there $\dot w_2=-w_2$, and its backward orbit would be unbounded.
The local stable manifold of the origin is precisely $w_1=0$.
An interior orbit therefore cannot converge to the origin in forward
time or form a homoclinic connection to it.  As verified by
\eqref{eq:2-added-038}--\eqref{eq:2-added-039} below, $Z_*$
is a hyperbolic sink and has no outgoing separatrix.
Poincar\'e--Bendixson now excludes periodic orbits and equilibrium
connection graphs, leaving only $Z_*$ as the $\omega$-limit set.
Thus $w(\tau)\to Z_*$. At $Z_*$,
\begin{equation}\label{eq:2-added-038}
 DF(Z_*)=
 \begin{pmatrix}
 -\dfrac{2\csq}{1+\csq}&
 -\dfrac{1+5\csq}{2\pi(1+\csq)^2}\\[2mm]
 4\pi&-1
 \end{pmatrix},
\end{equation}
whose eigenvalues are
\begin{equation}\label{eq:2-added-039}
 -\eta_*\mathbin{\pm}
 i\frac{\sqrt{7+42\csq-(\csq)^2}}{2(1+\csq)}.
\end{equation}
Let $z=w-Z_*$ and $J=DF(Z_*)$.  Near zero,
$\dot z=Jz+N(z)$ with $|N(z)|\le C|z|^2$.
Given $\gamma<\gamma_1<\eta_*$, the eigenvalues in
\eqref{eq:2-added-039} imply
$\|\ee^{Jt}\|\le C_{\gamma_1}\ee^{-\gamma_1t}$.
For sufficiently large $\tau_0$, $|N(z)|\le\varepsilon|z|$ along
the orbit.  Variation of constants and Gronwall give
\begin{equation}\label{eq:2-added-040}
 |z(\tau)|\le C_{\gamma_1}|z(\tau_0)|
 \ee^{-(\gamma_1-C_{\gamma_1}\varepsilon)(\tau-\tau_0)}.
\end{equation}
Choose $\varepsilon$ so that
$\gamma_1-C_{\gamma_1}\varepsilon\ge\gamma$. Then we have
$|w(\tau)-Z_*|\le C_\gamma\ee^{-\gamma\tau}$.
The proof is completed by rewriting in terms of the original variables.
\end{proof}

Let
\begin{equation}\label{eq:2-added-041}
 A_*=\frac{(\csq)^2}{2\pi D_*},\quad
 B_*=\frac{\csq}{2\pi D_*},\quad
 U_*=\frac{2\csq}{D_*},\quad
 \beta_*=\frac{2\csq}{1+\csq}.
\end{equation}

\begin{proposition}\label{prop:inner-annulus}
Choose $0<\alpha_1<\alpha_2<1/4$, $0<\gamma<\eta_*$ and
$\delta=\gamma\alpha_1$.  Let
$r_\kappa^1=\kappa^{\alpha_1}\ee^{-2\kappa}$ and
$r_\kappa^2=\kappa^{\alpha_2}\ee^{-2\kappa}$.  For sufficiently large $\kappa$, on
$[r_\kappa^1,r_\kappa^2]$,
\begin{align}\label{eq:2-21}
 &|r^2\rho_\kappa -B_*|+|r^2p_\kappa -A_*|
 +\left|\frac{m_\kappa}{r}-U_*\right|
 +|r\mu_\kappa '-\beta_*|+|r^2\mu_\kappa ''+\beta_*|\notag\\
 &\quad+\left|\ee^{2\lambda_\kappa}-\frac{D_*}{(1+\csq)^2}\right|
 +|r\lambda_\kappa '|+|r^2\lambda_\kappa ''|
 +|P'(\rho_\kappa) -\csq|\le C\kappa^{-\delta}.
\end{align}
There is $\bar{C}_\kappa >0$, independent of $r$, such that
\begin{equation}\label{eq:2-22}
 \bar{C}_\kappa \exp(-C\kappa^{-\delta}\log \kappa )
 \le\frac{\ee^{\mu_\kappa (r)}}{r^{\beta_*}}
 \le \bar{C}_\kappa \exp(C\kappa^{-\delta}\log \kappa ).
\end{equation}
\end{proposition}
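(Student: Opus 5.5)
The strategy is to combine the pressure comparison of Lemma \ref{lem:pressure-comparison} near the center with the asymptotics of the massless orbit from Lemma \ref{lem:limiting-orbit}, using the scaling identity \eqref{eq:2-added-032}. Throughout we use the pressure parametrization of Remark \ref{rem:pressure-parameter} and put $\tau=\ee^{2\kappa}r$. On $[r_\kappa^1,r_\kappa^2]$ we then have $\tau\in[\kappa^{\alpha_1},\kappa^{\alpha_2}]$.

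The first step is to compare with the massless solution. By \eqref{eq:2-added-032} we have $r^2p_\kappa^*(r)=\tau^2p_0^*(\tau)$, so Lemma \ref{lem:limiting-orbit} gives
\[
|r^2p_\kappa^*-A_*|\le C_\gamma\tau^{-\gamma}\le C\kappa^{-\gamma\alpha_1}=C\kappa^{-\delta}.
\]
Lemma \ref{lem:pressure-comparison} gives
\[
r^2|p_\kappa-p_\kappa^*|\le C\ee^{-2\kappa}(\tau^4+\tau^6)\exp\bigl(C(\tau^2+\tau^4)\bigr).
\]
Since $\tau^4\le\kappa^{4\alpha_2}$ with $4\alpha_2<1$, this is at most $\ee^{-2\kappa+C\kappa^{4\alpha_2}}\kappa^{6\alpha_2}$, which is exponentially small. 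This is exactly where the restriction $\alpha_2<1/4$ enters. Hence $|r^2p_\kappa-A_*|\le C\kappa^{-\delta}$.

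The second step handles density and $P'$. On the annulus $p_\kappa\gtrsim r^{-2}\ge\kappa^{-2\alpha_2}\ee^{4\kappa}$ is huge. By \eqref{eq:2-added-019} we have $|S(p)-p/\csq|\le C$, so
\[
r^2\rho_\kappa=r^2p_\kappa/\csq+O(r^2)=B_*+O(\kappa^{-\delta}).
\]
Also $|P'(\rho_\kappa)-\csq|\le C\rho_\kappa^{-\theta}$ is exponentially small by \eqref{eq:1-10}.

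The third step treats the mass. Write
\[
m_\kappa(r)=4\pi\int_0^r s^2\rho_\kappa\,\dd s.
\]
Split the integral at $s=r_\kappa^1$ or, more simply, compare with $m_\kappa^*$ on the whole of $[0,r]$, using the pointwise comparison of the first step together with $|\rho_\kappa-p_\kappa/\csq|\le C$. The difference is bounded by $\int_0^r s^2(|p_\kappa-p_\kappa^*|+C)\,\dd s$, which is again exponentially small after division by $r$. By scaling, $m_\kappa^*(r)/r=m_0^*(\tau)/\tau$, so Lemma \ref{lem:limiting-orbit} yields $|m_\kappa/r-U_*|\le C\kappa^{-\delta}$.

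The remaining quantities follow algebraically. First, $\ee^{2\lambda_\kappa}=(1-2m_\kappa/r)^{-1}$ tends to $(1-2U_*)^{-1}=D_*/(1+\csq)^2$. Next, $r\mu_\kappa'=\ee^{2\lambda_\kappa}(m_\kappa/r+4\pi r^2p_\kappa)$ by \eqref{eq:2-7}. Plugging in the limits gives
\[
\frac{D_*}{(1+\csq)^2}\cdot\frac{2\csq(1+\csq)}{D_*}=\beta_*,
\]
as required. For $r\lambda_\kappa'$, use \eqref{eq:1-20} in the form $r\lambda'=\tfrac12\ee^{2\lambda}(8\pi r^2\rho-2m/r)$, which evaluates to $\tfrac12\ee^{2\lambda}(4\csq/D_*-4\csq/D_*)+O(\kappa^{-\delta})$, so $|r\lambda_\kappa'|\le C\kappa^{-\delta}$.

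The second derivatives are the main obstacle, since they require control of $(r^2p)'$ rather than just $r^2p$. For these I will differentiate the expressions above and use the ODE structure rather than differentiating estimates. From \eqref{eq:2-19}--\eqref{eq:2-20}, $r\,\tfrac{\dd}{\dd r}(r^2\rho_\kappa)$ and $r\,\tfrac{\dd}{\dd r}(m_\kappa/r)$ are given by the vector field $F$ evaluated at $(r^2\rho_\kappa,m_\kappa/r)$, up to errors from $S\ne S_*$ and $P'\ne\csq$. These errors are exponentially small by the second step. Since $F(Z_*)=0$ and $F$ is Lipschitz, these derivatives are $O(\kappa^{-\delta})$. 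Then
\[
r^2\mu''=r(r\mu')'-r\mu'=O(\kappa^{-\delta})-\beta_*,
\]
and similarly $r^2\lambda''=O(\kappa^{-\delta})$.

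Finally, for \eqref{eq:2-22}, integrate $(\mu_\kappa-\beta_*\log r)'=(r\mu_\kappa'-\beta_*)/r$ from $r_\kappa^1$. The integrand is bounded by $C\kappa^{-\delta}/r$ and $\log(r_\kappa^2/r_\kappa^1)=(\alpha_2-\alpha_1)\log\kappa$, so the total variation is at most $C\kappa^{-\delta}\log\kappa$. Taking $\bar C_\kappa=\ee^{\mu_\kappa(r_\kappa^1)}(r_\kappa^1)^{-\beta_*}$ proves \eqref{eq:2-22}.
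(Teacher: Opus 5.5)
Your proof is correct. For the matter, mass and first-order metric quantities, and for \eqref{eq:2-22}, it follows the paper's argument:
\begin{itemize}
\item the scaling identity \eqref{eq:2-added-032} together with Lemma~\ref{lem:limiting-orbit};
\item Lemma~\ref{lem:pressure-comparison}, whose error is super-polynomially small because $4\alpha_2<1$;
\item the bounded remainder of $S$ for $\rho_\kappa$ and $m_\kappa$;
\item the algebraic identities for $\ee^{2\lambda_\kappa}$, $r\mu_\kappa'$ and $r\lambda_\kappa'$;
\item integration of $(r\mu_\kappa'-\beta_*)/r$ with the same constant $\bar C_\kappa$.
\end{itemize}

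The one genuine difference is in the second-derivative bounds. The paper differentiates the formulas for $\mu_\kappa'$ and $\lambda_\kappa'$ to obtain \eqref{eq:2-added-047} and \eqref{eq:2-added-049}. It then inserts the annular limits and verifies the cancellations \eqref{eq:2-added-048} and \eqref{eq:2-added-050} by hand.

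You instead use that $w=(r^2\rho_\kappa,m_\kappa/r)$ solves the autonomous system \eqref{eq:2-19}--\eqref{eq:2-20} up to exponentially small errors coming from $S\ne S_*$ and $P'\ne\csq$. Since $F(Z_*)=0$ and $F$ is smooth near $Z_*$ (recall $U_*<1/2$), this gives $r\,\frac{\dd w}{\dd r}=O(\kappa^{-\delta})$. The identity $r^2f''=r(rf')'-rf'$ then finishes the argument.

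Your route explains the paper's cancellations, which merely express that $Z_*$ is an equilibrium, and it avoids the explicit algebra. The paper's route has the advantage of producing the explicit formulas \eqref{eq:4-2} that are reused in Section~\ref{sec4}.

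One step you should make explicit. The quantity $r\mu_\kappa'=\ee^{2\lambda_\kappa}(m_\kappa/r+4\pi r^2p_\kappa)$ involves $r^2p_\kappa$, not $r^2\rho_\kappa$, so you also need $r\frac{\dd}{\dd r}(r^2p_\kappa)=O(\kappa^{-\delta})$. This follows from
$r\frac{\dd}{\dd r}(r^2p_\kappa)=r^2p_\kappa\bigl(2-(1+\rho_\kappa/p_\kappa)r\mu_\kappa'\bigr)$,
whose limit $A_*\bigl(2-(1+1/\csq)\beta_*\bigr)$ is zero. For $\lambda_\kappa''$, the analogous observation is $r\lambda_\kappa'=\ee^{2\lambda_\kappa}\,r\frac{\dd w_2}{\dd r}$.
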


\begin{proof}
Let $q=\ee^{2\kappa}r$.  Then $\kappa^{\alpha_1}\le q\le \kappa^{\alpha_2}$.
The scaling identity \eqref{eq:2-added-032} and the limiting estimate
\eqref{eq:2-added-034} give
\begin{equation}\label{eq:2-added-042}
 |r^2p_\kappa^*-A_*|+
 \left|\frac{m_\kappa^*}r-U_*\right|
 \le Cq^{-\gamma}\le C\kappa^{-\delta}.
\end{equation}
The comparison estimate \eqref{eq:2-added-021} gives
\begin{equation}\label{eq:2-added-043}
 r^2|p_\kappa -p_\kappa^*|
 \le C\ee^{-2\kappa}(q^4+q^6)
 \exp\!\left(C(q^2+q^4)\right).
\end{equation}
Because $4\alpha_2<1$, \eqref{eq:2-added-043} is $O(\kappa^{-N})$ for every $N>0$. Combining \eqref{eq:2-added-042} and \eqref{eq:2-added-043} therefore gives
$|r^2p_\kappa -A_*|\le C\kappa^{-\delta}$. Using the bounded remainder of $S$ in \eqref{eq:2-added-019}, we obtain
\begin{equation}\label{eq:2-added-044}
 |r^2\rho_\kappa -B_*|
 \le\frac1{\csq}|r^2p_\kappa -A_*|+r^2|S(p_\kappa)-p_\kappa/\csq|
 \le C\kappa^{-\delta}.
\end{equation}
The global Lipschitz bound for $S$ gives
$|\rho_\kappa -\rho_{*,\kappa}|\le C(1+|p_\kappa -p_\kappa^*|)$.
Hence \eqref{eq:2-4} and \eqref{eq:2-added-021} give
\begin{equation}\label{eq:2-added-045}
\begin{aligned}
 \frac{|m_\kappa -m_\kappa^*|}{r}
 &\le C\ee^{-4\kappa}q^2
 +C\ee^{-2\kappa}(q^4+q^6)
 \exp\!\left(C(q^2+q^4)\right)=O(\kappa^{-N}).
\end{aligned}
\end{equation}
Together with \eqref{eq:2-added-042}, this proves the mass estimate. Next, \eqref{eq:2-3}, \eqref{eq:2-7} and \eqref{eq:2-8} give
\begin{equation}\label{eq:2-added-046}
 \ee^{2\lambda_\kappa}=\frac1{1-2m_\kappa /r},
 \quad r\mu_\kappa '=\ee^{2\lambda_\kappa}
 \left(\frac{m_\kappa}{r}+4\pi r^2p_\kappa \right),
 \quad r\lambda_\kappa '=\ee^{2\lambda_\kappa}
 \left(4\pi r^2\rho_\kappa -\frac{m_\kappa}{r}\right).
\end{equation}
Using
$1-2U_*=(1+\csq)^2/D_*$,
$D_*(U_*+4\pi A_*)/(1+\csq)^2=\beta_*$ and
$4\pi B_*-U_*=0$ in \eqref{eq:2-added-046}, together with the matter and
mass estimates just proved, gives metric estimates. Differentiate the formula for $\mu_\kappa '$ in \eqref{eq:2-added-046}.  Since
$m_\kappa '=4\pi r^2\rho_\kappa$ and
$p_\kappa '=-(\rho_\kappa +p_\kappa )\mu_\kappa '$,
\begin{equation}\label{eq:2-added-047}
 \mu_\kappa ''=2\lambda_\kappa '\mu_\kappa '
 +\ee^{2\lambda_\kappa}
 \left[4\pi(\rho_\kappa +p_\kappa )-\frac{2m_\kappa}{r^3}
 -4\pi r(\rho_\kappa +p_\kappa )\mu_\kappa '\right].
\end{equation}
After multiplication by $r^2$, the limiting bracket equals
\begin{equation}\label{eq:2-added-048}
 \frac{D_*}{(1+\csq)^2}
 [4\pi(A_*+B_*)-2U_*-4\pi(A_*+B_*)\beta_*]
 =-\beta_*.
\end{equation}
Combining \eqref{eq:2-added-047}--\eqref{eq:2-added-048} with the
first derivative estimates proves
$|r^2\mu_\kappa ''+\beta_*|\le C\kappa^{-\delta}$. For $\lambda_\kappa ''$, we differentiate \eqref{eq:2-added-046}, use
$\rho_\kappa '=-(\rho_\kappa +p_\kappa )\mu_\kappa '/P'(\rho_\kappa )$ and obtain
\begin{equation}\label{eq:2-added-049}
 \lambda_\kappa ''=2\ee^{4\lambda_\kappa}
 \left(4\pi r\rho_\kappa -\frac{m_\kappa}{r^2}\right)^2
 -4\pi r\ee^{2\lambda_\kappa}
 \frac{\rho_\kappa +p_\kappa}{P'(\rho_\kappa )}\mu_\kappa '
 +2\ee^{2\lambda_\kappa}\frac{m_\kappa}{r^3}.
\end{equation}
On the annulus, \eqref{eq:2-added-044} gives
$\rho_\kappa \ge c\ee^{4\kappa}\kappa^{-2\alpha_2}$, so
$|1/P'(\rho_\kappa )-1/\csq|\le C\rho_\kappa^{-\theta}\le C\ee^{-4\theta\kappa}\kappa^{2\theta\alpha_2}\le C\kappa^{-\delta}$.  The first limiting term vanishes,
and the other two cancel because
\begin{equation}\label{eq:2-added-050}
 -4\pi\frac{D_*}{(1+\csq)^2}(A_*+B_*)
 \frac{\beta_*}{\csq}
 +2\frac{D_*}{(1+\csq)^2}U_*=0.
\end{equation}
Thus $|r^2\lambda_\kappa ''|\le C\kappa^{-\delta}$. 
This proves the pure polynomial
estimate for $\lambda_\kappa ''$, as well as $|P'(\rho_\kappa) -\csq|\le C\kappa^{-\delta}$. Finally $|\mu_\kappa '-\beta_*/r|\le C\kappa^{-\delta}/r$.  Integrating from $r_\kappa^1$ to
$r$ gives
\begin{equation}\label{eq:2-added-051}
 \mu_\kappa (r)=\mu_\kappa (r_\kappa^1)+\beta_*\log\frac r{r_\kappa^1}+\mathcal E_\kappa(r), 
 \qquad \textit{where} \quad|\mathcal E_\kappa(r)|\le C\kappa^{-\delta}\log \kappa .
\end{equation}
Exponentiating and setting
$\bar{C}_\kappa =\ee^{\mu_\kappa (r_\kappa^1)}/(r_\kappa^1)^{\beta_*}$ proves \eqref{eq:2-22}.
\end{proof}

We now return to the original parameter $\kappa=Q(\rho_\kappa(0))$.
\begin{lemma}\label{lemma2.10}
The radius $R_\kappa$ and total mass $M_\kappa$ are $C^2$ functions
of $\kappa$ on $(0,\infty)$, so both have two continuous parameter
derivatives. The parameter derivatives of the interior profiles used
below are taken at fixed $r$ and extend continuously to $r=R_\kappa$
from inside the fluid. With the change $r=R_\kappa x$, every star is
described on the same interval $0\le x\le1$. After the explicit powers
of $x$ at the center are factored out, the remaining factors in the
coefficients of the quadratic form and its weight in Section \ref{sec4} are
jointly continuous in $(\kappa,x)$, including $x=0$ and $x=1$, on every
compact parameter interval in $(0,\infty)$.
\end{lemma}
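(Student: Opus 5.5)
The plan is to get everything from smooth dependence of ODE solutions on initial data, plus the implicit function theorem applied at the surface. I will work with the unknown $(\bar y(r),m(r))$ written as a first order system,
\[
\bar y'=-\frac{m/r^2+4\pi r\,h(\bar y)}{1-2m/r},\qquad m'=4\pi r^2 g(\bar y),
\]
with data $\bar y(0)=\kappa$ and $m(0)=0$. On the fluid region $\bar y>0$ we have $g=Q^{-1}$ and $h=P\circ Q^{-1}$. Since $P\in C^2$, the map $Q$ is $C^3$ with $Q'>0$, so $g,h\in C^2$ on $[0,\infty)$. I extend them to $C^2$ functions on a neighborhood of $\bar y=0$; the extension does not affect the physical solution, because that solution is only used while $\bar y\ge0$.

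The center is singular, so I treat it exactly as in the local existence lemma. Near $r=0$ I write $m=r^3\tilde m$ and solve the integral equations by a contraction, now with $\kappa$ as a parameter. The uniform contraction principle, with the fixed point depending $C^2$ on parameters because the right-hand side is $C^2$ in $(\bar y,\tilde m)$, gives $C^2$ dependence of $(\bar y,\tilde m)$ on $(\kappa,r)$ for $r\in[0,\widetilde\delta]$, locally uniformly in $\kappa$. Away from the center the system is regular as long as $2m/r<1$, which holds by \eqref{eq:2-3} and the global existence lemma. Standard differentiable dependence for regular ODEs then continues the $C^2$ dependence to any compact $r$-interval containing $[0,R_\kappa]$.

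The radius is handled by the implicit function theorem. $R_\kappa$ is defined by $\bar y_\kappa(R_\kappa)=0$, and
\[
\partial_r\bar y_\kappa(R_\kappa)=-\frac{M_\kappa}{R_\kappa(R_\kappa-2M_\kappa)}<0,
\]
because $M_\kappa>0$ and $R_\kappa>2M_\kappa$. Hence $\kappa\mapsto R_\kappa$ is $C^2$. Then $M_\kappa=m_\kappa(R_\kappa)$ is a composition of $C^2$ maps, so it is $C^2$. The interior parameter derivatives at fixed $r<R_\kappa$ are derivatives of the ODE solution. They extend continuously to $r=R_\kappa^-$ because the extended solution is $C^2$ up to and past the surface.

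For the rescaling I set $\tilde y(\kappa,x)=\bar y_\kappa(R_\kappa x)$, and define $\tilde m$ and the other profiles the same way. These are $C^2$ jointly in $(\kappa,x)$ on $[0,1]$. The quadratic form coefficients of Section~\ref{sec4} are built from $\rho,p,P'(\rho),\lambda,\mu,\mu',\lambda'$ and the integrating factor $\Lambda_\kappa$. Each of these is a continuous function of $\tilde y$, $\tilde m/x^3$, $x$ and $R_\kappa$. After the explicit powers of $x$ are pulled out, the quantities that remain, namely $m/r^3$, $\mu'/r$ and $\lambda'/r$, are smooth in $(\kappa,x)$ including $x=0$. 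This follows from the center expansion $m=r^3\tilde m$ with $\tilde m(\kappa,0)=\tfrac{4\pi}{3}g(\kappa)$.

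The main obstacle is the center. There one must verify that the parameter-dependent contraction for $\tilde m$ really yields joint continuity of $\mu'/r$ and $\lambda'/r$ at $x=0$, where the ODE has the singular coefficient $m/r^2$. I would handle this by working throughout with $\tilde m$ and the equation for $\bar y'/r$, which is regular, instead of the original equations.
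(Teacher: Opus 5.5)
Your proposal is correct and follows essentially the paper's route: a parameter-dependent contraction near the singular center, regular ODE dependence away from it, the implicit function theorem at the surface using the strictly negative slope $\bar y_\kappa'(R_\kappa)$ after a $C^2$ extension of $g,h$ to negative enthalpy, and the central expansions for the factored powers of $x$ (with $\mu_\kappa=\tfrac12\log(1-2M_\kappa/R_\kappa)-\bar y_\kappa$ inheriting the regularity once $R_\kappa,M_\kappa$ are $C^2$). One small slip: since $Q'=P'/(\rho+P)$ with $P'\in C^1$, $Q$ is only $C^2$, not $C^3$; however, $g=Q^{-1}$ and $h=P\circ g$ are still $C^2$, so nothing in the argument changes.
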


\begin{proof}
The functions $g$ and $h$ in \eqref{eq:2-1}--\eqref{eq:2-2} are $C^2$ on the fluid range.
Near any fixed positive value of $\kappa$, the integral formulation
\eqref{eq:2-added-004} is a uniform contraction on a common short radial
interval by \eqref{eq:2-added-011}.
Differentiating its fixed-point equation once and twice with respect
to $\kappa$ gives linear integral equations for the parameter derivatives.
Their linear part comes from the same contraction, so these equations
are uniquely solvable and their solutions depend continuously on
$\kappa$. Thus $\bar{y}_\kappa$ and $m_\kappa$ depend $C^2$ on $\kappa$ near
the center. The central expansions and the parameter derivatives needed
below also follow from this integral formulation.
For $r>0$, differentiating \eqref{eq:2-4} and using \eqref{eq:2-5} gives
ordinary differential equations with regular coefficients as long as $r-2m_\kappa(r)>0$. Standard dependence on initial
data and parameters therefore continues the regularity from the center
throughout the fluid interior.

The radius is determined by $\bar{y}_\kappa(R_\kappa)=0$, and \eqref{eq:2-5} gives
\begin{equation}\label{eq:2-added-052}
\bar{y}_\kappa'(R_\kappa)
=-\frac{M_\kappa}{R_\kappa(R_\kappa-2M_\kappa)}<0.
\end{equation}
In other words, $\bar{y}_\kappa$ reaches zero with a nonzero slope. To apply
the implicit function theorem, temporarily extend the fluid functions
$g$ and $h$ as $C^2$ functions to slightly negative enthalpy values.
The theorem then gives $R_\kappa\in C^2(0,\infty)$. Since
$M_\kappa=m_\kappa(R_\kappa)$, the total mass is $C^2$ as well.
The normalization identity
\begin{equation}\label{eq:2-added-053}
\mu_\kappa(r)
=\frac12\log\!\left(1-\frac{2M_\kappa}{R_\kappa}\right)-\bar{y}_\kappa(r)
\end{equation}
then gives the same parameter regularity for $\mu_\kappa$. The other
profiles follow from the static identities
\eqref{eq:2-1}--\eqref{eq:2-3}.
This auxiliary extension is only a proof device. It is not the physical
vacuum continuation and does not change the solution or its derivatives
on the fluid side. Surface values of the parameter derivatives are
therefore understood as limits from the fluid interior.

After setting $r=R_\kappa x$, the central expansions account for the
explicit powers of $x$ in the coefficients. The remaining factors are
continuous at $x=0$. Away from the center their continuity follows from the regularity just proved. On a compact parameter interval, the central densities are bounded, $P'$ has a common positive lower bound on the
corresponding density range, and both $R_\kappa$ and
$R_\kappa-2M_\kappa$ are bounded away from zero. The formulas in
Section \ref{sec4} therefore give the claimed joint continuity up to $x=1$.
\end{proof}

\section{Linearized equation}

We start by introducing the number density function
\begin{equation}\label{eq:3-added-001}
 N(\rho):=\exp\left(\int_{\rho_0}^{\rho}
 \frac{\dd s}{s+P(s)}\right).
\end{equation}
It satisfies
\begin{equation}\label{eq:3-1}
 \frac{\dd N}{\dd\rho}=\frac{N}{\rho+P(\rho)}.
\end{equation}
Differentiating \eqref{eq:3-1} gives
\begin{equation}\label{eq:3-added-002}
 \frac{\dd^2N}{\dd\rho^2}
 =\frac{N}{(\rho+P)^2}
 -\frac{N(1+P')}{(\rho+P)^2}
 =-\frac{NP'}{(\rho+P)^2}.
\end{equation}
Along a steady state, the static Euler equation \eqref{eq:1-23},
$P'(\rho_\kappa)\rho_\kappa'=-(\rho_\kappa+p_\kappa)\mu_\kappa'$, together with
\eqref{eq:3-added-002}, yields
\begin{equation}\label{eq:3-added-003}
\begin{aligned}
 \left(\frac{\dd N}{\dd\rho}(\rho_\kappa)\right)'
 =\frac{\dd^2N}{\dd\rho^2}(\rho_\kappa)\rho_\kappa'=-\frac{N(\rho_\kappa)P'(\rho_\kappa)}
 {(\rho_\kappa+p_\kappa)^2}\rho_\kappa'=\frac{N(\rho_\kappa)}{\rho_\kappa+p_\kappa}\mu_\kappa'
 =\frac{\dd N}{\dd\rho}(\rho_\kappa)\mu_\kappa'.
\end{aligned}
\end{equation}
Integrating \eqref{eq:3-added-003} from $R_\kappa$ to $r$ gives
\begin{equation}\label{eq:3-added-004}
 \frac{\dd N}{\dd\rho}(\rho_\kappa(r))
 =\frac{\dd N}{\dd\rho}(\rho_0)
 \ee^{\mu_\kappa(r)-\mu_\kappa(R_\kappa)}.
\end{equation}
Multiply $N$ by a positive constant and call
the normalized function $N_\kappa$.  Choose the constant so that
$N_\kappa'(\rho_0)=\ee^{\mu_\kappa(R_\kappa)}$, and write
$n_\kappa=N_\kappa(\rho_\kappa)$. Then \eqref{eq:3-1} and \eqref{eq:3-added-004} give
\begin{equation}\label{eq:3-2}
 \frac{n_\kappa}{\rho_\kappa+p_\kappa}
 =\frac{\dd N_\kappa}{\dd\rho}(\rho_\kappa)
 =\ee^{\mu_\kappa}
 \qquad(0\le r\le R_\kappa).
\end{equation}
The enthalpy coefficient used in the linearization is
\begin{equation}\label{eq:3-3}
 \Psi_\kappa(r):=
 \frac{1}{n_\kappa(r)}
 \frac{\dd P}{\dd\rho}(\rho_\kappa(r))
.
\end{equation}
It obeys the exact identity
\begin{equation}\label{eq:3-added-005}
 \Psi_\kappa n_\kappa'
 =\frac{P'(\rho_\kappa)}{n_\kappa}
 \frac{n_\kappa}{\rho_\kappa+p_\kappa}\rho_\kappa'
 =\frac{p_\kappa'}{\rho_\kappa+p_\kappa}
 =-\mu_\kappa'.
\end{equation}
Let $\eta(y,t)$ be the radial position at time $t$ of the particle and it is defined by
\begin{equation}\label{eq:3-4}
 \partial_t\eta=\frac{u}{u^0}\circ\eta,
 \qquad \eta(y,0)=\eta_0(y).
\end{equation}
Here $\eta_0$ is not necessarily the identity map but depends on the initial density profile.
We use the same letters for Euler quantities pulled back by $\eta$,
and \('\) now means differentiation with respect to $y$.  The
Einstein--Euler equations become
\begin{equation}\label{eq:3-5}
 \ee^{-2\lambda}
 \left(\frac{2\eta\lambda'}{\eta'}-1\right)+1
 =8\pi\eta^2\bigl(\rho+\ee^{2\lambda}(\rho+p)u^2\bigr),
\end{equation}
\begin{equation}\label{eq:3-6}
 \ee^{-2\lambda}
 \left(\frac{2\eta\mu'}{\eta'}+1\right)-1
 =8\pi\eta^2\bigl(p+\ee^{2\lambda}(\rho+p)u^2\bigr),
\end{equation}
\begin{equation}\label{eq:3-7}
 \dot\lambda-\ee^\mu\frac{u\lambda'}{\langle u\rangle\eta'}
 =-4\pi\eta\ee^{\mu+2\lambda}
 \langle u\rangle u(\rho+p),
\end{equation}
\begin{align}\label{eq:3-8}
 0=\dot\rho+(\rho+p)\left[
 \dot\lambda
 +\ee^\mu\frac{u}{\langle u\rangle}
   \left(\frac{\mu'}{\eta'}+\frac2\eta\right)
 +\ee^\mu\frac{u'}{\langle u\rangle\eta'}
 +\ee^{2\lambda}\frac{u}{\langle u\rangle}
   \frac{\dot\lambda u+\dot u}{\langle u\rangle}
 -\ee^{\mu+2\lambda}\frac{u^2}{\langle u\rangle^3}
   \frac{\lambda'u+u'}{\eta'}
 \right],
\end{align}
\begin{align}\label{eq:3-9}
 0={}&(\rho+p)\left[
 \ee^{2\lambda}(\dot u+2\dot\lambda u)
 +\ee^\mu\langle u\rangle\frac{\mu'}{\eta'}
 -\ee^{\mu+2\lambda}\frac{u}{\langle u\rangle}
   \frac{\lambda'u}{\eta'}\right]+\ee^\mu\langle u\rangle\frac{p'}{\eta'}
 +\ee^{2\lambda}u\dot p
 -\ee^{\mu+2\lambda}\frac{u}{\langle u\rangle}
   \frac{p'u}{\eta'}.
\end{align}
We now linearize \eqref{eq:3-5}--\eqref{eq:3-9} about a fixed steady state.  Write
$n,u,\rho,p,\lambda,\mu$ for the perturbations themselves, thus the
nonlinear quantities are $n_\kappa+n$, $\rho_\kappa+\rho$, and so on.
Linearization of
$\rho_\kappa+\rho=N_\kappa^{-1}(n_\kappa+n)$, using \eqref{eq:3-2}, gives
\begin{equation}\label{eq:3-10}
 \rho=\frac{1}{N_\kappa'(\rho_\kappa)}n
 =\ee^{-\mu_\kappa}n.
\end{equation}
Linearization of
$p_\kappa+p=P(N_\kappa^{-1}(n_\kappa+n))$ gives
\begin{equation}\label{eq:3-11}
 p=P'(\rho_\kappa)\rho
 =\Psi_\kappa(\rho_\kappa+p_\kappa)n
.
\end{equation}
Let
\begin{equation}\label{eq:3-added-006}
 \eta(y,t)=y(1+\zeta(y,t)).
\end{equation}
Then
\begin{equation}\label{eq:3-12}
 \eta'=1+\zeta+y\zeta'.
\end{equation}
Substitution in \eqref{eq:3-5} gives the exact expression
\begin{align}\label{eq:3-13}
 &\ee^{-2(\lambda_\kappa+\lambda)}
 \left(
 \frac{2y(1+\zeta)(\lambda_\kappa'+\lambda')}
 {1+\zeta+y\zeta'}-1\right)+1\notag\\
 &\qquad
 =8\pi y^2(1+\zeta)^2
 \left[\rho_\kappa+\rho
 +\ee^{2(\lambda_\kappa+\lambda)}
 (\rho_\kappa+p_\kappa+\rho+p)u^2\right].
\end{align}
Since
\begin{equation}\label{eq:3-added-007}
 (1+\zeta+y\zeta')^{-1}
 =1-\zeta-y\zeta'+o(|\zeta|+|\zeta'|),
 \qquad
 \ee^{-2\lambda}=1-2\lambda+o(\lambda),
\end{equation}
applying \eqref{eq:3-added-007} to \eqref{eq:3-13}, discarding quadratic terms,
and using the background equation \eqref{eq:1-20} gives
\begin{equation}\label{eq:3-added-008}
\begin{aligned}
 &\ee^{-2\lambda_\kappa}
 y(-2\lambda\lambda_\kappa'-y\lambda_\kappa'\zeta'
 +\lambda')+\ee^{-2\lambda_\kappa}\lambda
 =4\pi y^2(\rho+2\rho_\kappa\zeta),\\
 &\ee^{-2\lambda_\kappa}
 (-2y\lambda\lambda_\kappa'+y\lambda'+\lambda)
 =4\pi y^2(\rho+2\rho_\kappa\zeta)
 +\ee^{-2\lambda_\kappa}y^2\lambda_\kappa'\zeta',\\
 &(y\lambda\ee^{-2\lambda_\kappa})'
 =y^2\left(4\pi\rho+8\pi\rho_\kappa\zeta
 +\ee^{-2\lambda_\kappa}\lambda_\kappa'\zeta'\right).
\end{aligned}
\end{equation}
Integrating the last identity in \eqref{eq:3-added-008} and using regularity
at the center removes the integration constant, and hence
\begin{equation}\label{eq:3-14}
 \lambda(y)=\frac{\ee^{2\lambda_\kappa(y)}}y
 \int_0^y s^2f(s)\dd s,
 \qquad
 f:=4\pi\rho+8\pi\rho_\kappa\zeta
 +\ee^{-2\lambda_\kappa}\lambda_\kappa'\zeta'.
\end{equation}
Linearizing \eqref{eq:3-6} in the same way gives
\begin{align}\label{eq:3-15}
 y\mu'
 &=(2y\mu_\kappa'+1)\lambda
 +4\pi y^2\ee^{2\lambda_\kappa}p
 +y^2\mu_\kappa'\zeta'
 +8\pi y^2p_\kappa\ee^{2\lambda_\kappa}\zeta\notag\\
 &=(2y\mu_\kappa'+1)\lambda
 +y\Psi_\kappa(\lambda_\kappa'+\mu_\kappa')
   \ee^{\mu_\kappa}\rho
 +y^2\mu_\kappa'\zeta'
 +8\pi y^2p_\kappa\ee^{2\lambda_\kappa}\zeta.
\end{align}
The second line uses \eqref{eq:2-8}, \eqref{eq:3-2}, \eqref{eq:3-10} and \eqref{eq:3-11}.
Linearization of the radial momentum equation \eqref{eq:3-9} first gives

\begin{equation}\label{eq:3-added-009}
\begin{aligned}
 0&=\ee^{2\lambda_\kappa}\dot u
 +\ee^{\mu_\kappa}\mu'
 +\ee^{\mu_\kappa}\mu_\kappa'
   \frac{\rho+p}{\rho_\kappa+p_\kappa}
 +\frac{\ee^{\mu_\kappa}}{\rho_\kappa+p_\kappa}p'\\
 &=\ee^{2\lambda_\kappa}\dot u
 +\ee^{\mu_\kappa}\mu'
 +\ee^{\mu_\kappa}
   \left(\ee^{\mu_\kappa}\Psi_\kappa\rho\right)'.
\end{aligned}
\end{equation}
Here \eqref{eq:3-2}, \eqref{eq:3-3}, \eqref{eq:3-added-005} and
\eqref{eq:3-11} have been used. Now multiply \eqref{eq:3-added-009} by $\ee^{\lambda_\kappa}$ and substitute \eqref{eq:3-15}
\begin{equation}\label{eq:3-16}
 \begin{aligned}
 0={}&\ee^{3\lambda_\kappa}\dot u+
 \ee^{\lambda_\kappa+\mu_\kappa}\mu'+\ee^{\lambda_\kappa+\mu_\kappa}(\ee^{\mu_\kappa}\Psi_\kappa\rho)'\\
 ={}&\ee^{3\lambda_\kappa}\dot u+
 \frac{\ee^{\lambda_\kappa+\mu_\kappa}}{y}(2y\mu_\kappa'+1)\lambda
 +(\ee^{\lambda_\kappa+\mu_\kappa}\ee^{\mu_\kappa}\Psi_\kappa\rho)'\\
 &+y\ee^{3\lambda_\kappa+\mu_\kappa}
 \left(8\pi p_\kappa\zeta+
 \ee^{-2\lambda_\kappa}\mu_\kappa'\zeta'\right).
 \end{aligned}
\end{equation}
Linearizing \eqref{eq:3-7} and using \eqref{eq:2-8} gives
\begin{equation}\label{eq:3-17}
 \dot\lambda
 =\ee^{\mu_\kappa}\lambda_\kappa'u
 -4\pi y\ee^{\mu_\kappa+2\lambda_\kappa}
 u(\rho_\kappa+p_\kappa)
 =-\ee^{\mu_\kappa}\mu_\kappa'u.
\end{equation}
The linearization of \eqref{eq:3-8}, using \eqref{eq:3-17} and \eqref{eq:3-2}, is then
\begin{equation}\label{eq:3-18}
 0=\dot\rho+(\rho_\kappa+p_\kappa)
 \ee^{\mu_\kappa}\left(\frac{2u}{y}+u'\right)
 =\dot\rho+\frac{n_\kappa}{y^2}(y^2u)'.
\end{equation}
Finally, linearizing the particle equation \eqref{eq:3-4} gives
\begin{equation}\label{eq:3-19}
 y\dot\zeta=\ee^{\mu_\kappa}u.
\end{equation}
Combining \eqref{eq:3-18} and \eqref{eq:3-19},
\begin{equation}\label{eq:3-added-010}
 \frac{\partial}{\partial t}\left[
 \rho+\frac{n_\kappa}{y^2}
 (\ee^{-\mu_\kappa}y^3\zeta)'\right]=0.
\end{equation}
We can choose the initial displacement so that
the bracket vanishes at $t=0$.
Then
\begin{equation}\label{eq:3-20}
 \rho+\frac{n_\kappa}{y^2}
 (\ee^{-\mu_\kappa}y^3\zeta)'=0.
\end{equation}
Using \eqref{eq:3-2}, define
\begin{equation}\label{eq:3-added-011}
 \mathcal D_\kappa\zeta
 :=y\zeta'+(3-y\mu_\kappa')\zeta.
\end{equation}
Using \eqref{eq:3-added-011} and the pressure relation \eqref{eq:3-11},
equation \eqref{eq:3-20} is equivalently
\begin{equation}\label{eq:3-added-012}
 \rho=-(\rho_\kappa+p_\kappa)\mathcal D_\kappa\zeta,
 \qquad
 p=-P'(\rho_\kappa)(\rho_\kappa+p_\kappa)
 \mathcal D_\kappa\zeta.
\end{equation}
The physical boundary condition is $p=0$.  Its first
variation at $R_\kappa$ is
\begin{equation}\label{eq:3-added-013}
 0=p(R_\kappa)=P'(\rho_0)\rho(R_\kappa).
\end{equation}
Because $P'(\rho_0)>0$, this is equivalent to $\rho(R_\kappa)=0$.
Equation \eqref{eq:3-20} therefore gives the Robin boundary condition
\begin{equation}\label{eq:3-21}
 (3-R_\kappa\mu_\kappa'(R_\kappa))\zeta(R_\kappa)
 +R_\kappa\zeta'(R_\kappa)=0.
\end{equation}

\begin{proposition}
For a fixed steady state
$(n_\kappa,\rho_\kappa,p_\kappa,\lambda_\kappa,\mu_\kappa,\Psi_\kappa)$,
the radial perturbation variable \(\zeta(t,y)\) satisfies
\begin{align}\label{eq:3-22}
 0={}&y\ee^{3\lambda_\kappa-\mu_\kappa}\ddot\zeta
 +\ee^{3\lambda_\kappa+\mu_\kappa}
 \frac{2y\mu_\kappa'+1}{y^2}
 \int_0^y s^2f(s)\dd s\notag\\
 &+\left(\ee^{2\mu_\kappa+\lambda_\kappa}
 \Psi_\kappa\rho\right)'
 +y\ee^{3\lambda_\kappa+\mu_\kappa}
 \left(8\pi p_\kappa\zeta
 +\ee^{-2\lambda_\kappa}\mu_\kappa'\zeta'\right),
\end{align}
with Robin boundary condition
\begin{equation}\label{eq:3-23}
 (3-R_\kappa\mu_\kappa'(R_\kappa))\zeta(R_\kappa)
 +R_\kappa\zeta'(R_\kappa)=0,
\end{equation}
where $f$ and $\rho$ are given by \eqref{eq:3-14} and \eqref{eq:3-20}.
\end{proposition}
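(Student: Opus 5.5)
The derivation is a direct assembly of the linearized relations already obtained. I start from the linearized momentum equation \eqref{eq:3-added-009}, multiplied by $\ee^{\lambda_\kappa}$, and use the form \eqref{eq:3-16}. In that identity $\mu'$ has already been replaced via \eqref{eq:3-15}. One point needs care: the product rule must be used correctly when pulling $\ee^{\lambda_\kappa+\mu_\kappa}$ inside the derivative of $\ee^{\mu_\kappa}\Psi_\kappa\rho$. The term $\ee^{\lambda_\kappa+\mu_\kappa}y\Psi_\kappa(\lambda_\kappa'+\mu_\kappa')\ee^{\mu_\kappa}\rho/y$ coming from \eqref{eq:3-15} is exactly
\[
(\ee^{\lambda_\kappa+\mu_\kappa})'\,\ee^{\mu_\kappa}\Psi_\kappa\rho .
\]
Adding it to $\ee^{\lambda_\kappa+\mu_\kappa}(\ee^{\mu_\kappa}\Psi_\kappa\rho)'$ therefore produces the total derivative $(\ee^{2\mu_\kappa+\lambda_\kappa}\Psi_\kappa\rho)'$. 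The remaining pieces of \eqref{eq:3-15} are $y^2\mu_\kappa'\zeta'$ and $8\pi y^2p_\kappa\ee^{2\lambda_\kappa}\zeta$. Multiplied by $\ee^{\lambda_\kappa+\mu_\kappa}/y$, they give the last bracket in \eqref{eq:3-22}, after writing $\ee^{\lambda_\kappa+\mu_\kappa}y\mu_\kappa'\zeta'=y\ee^{3\lambda_\kappa+\mu_\kappa}\ee^{-2\lambda_\kappa}\mu_\kappa'\zeta'$.

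Next I would eliminate the metric perturbation $\lambda$ and the velocity $u$. For $\lambda$ I substitute \eqref{eq:3-14}:
\[
\frac{\ee^{\lambda_\kappa+\mu_\kappa}}{y}(2y\mu_\kappa'+1)\lambda
=\ee^{3\lambda_\kappa+\mu_\kappa}\frac{2y\mu_\kappa'+1}{y^2}\int_0^y s^2f(s)\dd s ,
\]
which is the integral term in \eqref{eq:3-22}. For the velocity, \eqref{eq:3-19} gives $u=y\ee^{-\mu_\kappa}\dot\zeta$. The background is static, so $\dot u=y\ee^{-\mu_\kappa}\ddot\zeta$, and $\ee^{3\lambda_\kappa}\dot u=y\ee^{3\lambda_\kappa-\mu_\kappa}\ddot\zeta$. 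The density perturbation $\rho$ appearing in $f$ and in the derivative term is expressed through $\zeta$ by the constraint \eqref{eq:3-20}. That constraint is legitimate because of the conservation law \eqref{eq:3-added-010} together with the stated choice of initial data, so \eqref{eq:3-22} is a closed equation in $\zeta$ alone.

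The boundary condition \eqref{eq:3-23} is just \eqref{eq:3-21}. It follows from the linearized surface condition $p(R_\kappa)=0$, the relation $p=P'(\rho_0)\rho$ at the surface, the positivity $P'(\rho_0)>0$, and \eqref{eq:3-added-012}. In the Lagrangian coordinate the perturbed surface is the fixed label $y=R_\kappa$, so no additional transport term appears.

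The main obstacle is the justification of \eqref{eq:3-17}, which is needed in deriving \eqref{eq:3-18}. There, $\dot\lambda$ from \eqref{eq:3-7} must be combined with the $(\lambda_\kappa'+\mu_\kappa')$ terms via the TOV identity \eqref{eq:2-8}. Only after this cancellation does the continuity equation reduce to the clean divergence form $\dot\rho+n_\kappa y^{-2}(y^2u)'=0$, and only then does it integrate in time to \eqref{eq:3-20}. I would recheck this cancellation, together with the product-rule step described in the first paragraph, line by line. The other steps are routine substitutions.
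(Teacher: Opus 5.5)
Your proposal is correct and takes the paper's route: eliminate $\dot u$ via \eqref{eq:3-19} and use \eqref{eq:3-16}, whose second line is exactly your product-rule absorption of the $(\lambda_\kappa'+\mu_\kappa')\Psi_\kappa\ee^{\mu_\kappa}\rho$ term from \eqref{eq:3-15}; then substitute \eqref{eq:3-14} for $\lambda$, close with \eqref{eq:3-20}, and take the boundary condition from \eqref{eq:3-21}. The cancellation in \eqref{eq:3-17} that you flag as the main obstacle is already established before the proposition by the TOV identity \eqref{eq:2-8}, so nothing further is needed there.
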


\begin{proof}
Differentiate \eqref{eq:3-19} in time and use \eqref{eq:3-16} to eliminate $\dot u$.
Equation \eqref{eq:3-16} already incorporates \eqref{eq:3-15}. Now substitute
\eqref{eq:3-14} for $\lambda$.  The term
containing $\ddot\zeta$ is
$y\ee^{3\lambda_\kappa-\mu_\kappa}\ddot\zeta$, the $\lambda$ term becomes
the integral term displayed in \eqref{eq:3-22}, the pressure gradient term stays
as $(\ee^{2\mu_\kappa+\lambda_\kappa}\Psi_\kappa\rho)'$, and the remaining terms are obtained through direct calculation.
\end{proof}

The integral in \eqref{eq:3-22} seems to destroy symmetry, but it can be removed
exactly.  From \eqref{eq:3-14} and \eqref{eq:3-20},
\begin{equation}\label{eq:3-added-014}
\begin{aligned}
 s^2f(s)
 &=-4\pi n_\kappa
   (\ee^{-\mu_\kappa}s^3\zeta)'
   +8\pi s^2\rho_\kappa\zeta
   +\ee^{-2\lambda_\kappa}s^2\lambda_\kappa'\zeta'\\
 &=-4\pi(\rho_\kappa+p_\kappa)
   (-\mu_\kappa's^3\zeta+3s^2\zeta+s^3\zeta')
   +8\pi s^2\rho_\kappa\zeta
   +\ee^{-2\lambda_\kappa}s^2\lambda_\kappa'\zeta'\\
 &=-4\pi p_\kappa's^3\zeta
   -12\pi p_\kappa s^2\zeta
   -4\pi\rho_\kappa s^2\zeta
   -4\pi(\rho_\kappa+p_\kappa)s^3\zeta'
   +\ee^{-2\lambda_\kappa}s^2\lambda_\kappa'\zeta'\\
 &=-4\pi(p_\kappa s^3)'\zeta
   -4\pi\rho_\kappa s^2\zeta
   -4\pi s^3(p_\kappa+w_\kappa)\zeta'.
\end{aligned}
\end{equation}
In the last line we used
$\ee^{-2\lambda_\kappa}\lambda_\kappa'
=4\pi s(\rho_\kappa-w_\kappa)$, which follows from \eqref{eq:2-8} and \eqref{eq:2-9}.
Integrating the first term by parts gives
\begin{equation}\label{eq:3-added-015}
\begin{aligned}
 \int_0^y s^2f(s)\dd s
 &=-4\pi p_\kappa(y)y^3\zeta(y)
 -\int_0^y4\pi\rho_\kappa s^2\zeta\dd s
 -\int_0^y4\pi s^3w_\kappa\zeta'\dd s.
\end{aligned}
\end{equation}
Since \eqref{eq:2-4} and \eqref{eq:2-10} give $4\pi s^3w_\kappa=m_\kappa$ and
$m_\kappa'=4\pi s^2\rho_\kappa$, a second integration by parts yields
\begin{equation}\label{eq:3-24}
 \int_0^y s^2f(s)\dd s
 =-m_\kappa(y)\zeta(y)
 -4\pi p_\kappa(y)y^3\zeta(y).
\end{equation}
For the pressure gradient term, let
\begin{equation}\label{eq:3-added-016}
 d_\kappa:=3-y\mu_\kappa',\qquad
 \mathcal D_\kappa\zeta=y\zeta'+d_\kappa\zeta.
\end{equation}
By \eqref{eq:3-2}, \eqref{eq:3-3} and \eqref{eq:3-20},
\begin{equation}\label{eq:3-added-017}
 \ee^{2\mu_\kappa+\lambda_\kappa}\Psi_\kappa\rho
 =-\ee^{\lambda_\kappa+\mu_\kappa} P'(\rho_\kappa)\mathcal D_\kappa\zeta.
\end{equation}
Therefore
\begin{equation}\label{eq:3-25}
 \left(\ee^{2\mu_\kappa+\lambda_\kappa}
 \Psi_\kappa\rho\right)'
 +y\ee^{\lambda_\kappa+\mu_\kappa}\mu_\kappa'\zeta'
 =-(\ee^{\lambda_\kappa+\mu_\kappa} P'(\rho_\kappa)\mathcal D_\kappa\zeta)'
 +y\ee^{\lambda_\kappa+\mu_\kappa}\mu_\kappa'\zeta'.
\end{equation}
Expanding the right-hand side of \eqref{eq:3-25} with $d_\kappa$ from
\eqref{eq:3-added-016}, we have
\begin{equation}\label{eq:3-added-018}
\begin{aligned}
 &-\ee^{\lambda_\kappa+\mu_\kappa} P'(\rho_\kappa) y\zeta''\\
 &\quad+\ee^{\lambda_\kappa+\mu_\kappa}\bigl[
 -y\bigl(P'(\rho_\kappa)\bigr)'-4P'(\rho_\kappa)
 -P'(\rho_\kappa) y\lambda_\kappa'
 +y\mu_\kappa'\bigr]\zeta'
 -(\ee^{\lambda_\kappa+\mu_\kappa} P'(\rho_\kappa) d_\kappa)'\zeta.
\end{aligned}
\end{equation}
After substituting \eqref{eq:3-24} into \eqref{eq:3-22} and dividing by
$\ee^{\lambda_\kappa+\mu_\kappa}$, the remaining zeroth order coefficient is
\begin{equation}\label{eq:3-added-019}
 V_\kappa:=8\pi yp_\kappa\ee^{2\lambda_\kappa}
 -\ee^{2\lambda_\kappa}\frac{2y\mu_\kappa'+1}{y^2}
 (m_\kappa+4\pi p_\kappa y^3).
\end{equation}
In this paper, we say the system is linearly unstable to mean that the linearized
equation admits a growing mode solution of the form $\zeta(y,t)=\ee^{at}\chi(y)$ with $a>0$. Otherwise, we call the system linearly stable.

\begin{proposition}\label{prop:radial-operator}
If $\zeta(y,t)=\ee^{at}\chi(y)$, then
\begin{align}\label{eq:3-26}
 L_\kappa\chi
 &:=-\ee^{-\lambda_\kappa-\mu_\kappa}
 (\ee^{\lambda_\kappa+\mu_\kappa} P'(\rho_\kappa)\mathcal D_\kappa\chi)'
 +y\mu_\kappa'\chi'+V_\kappa\chi\notag\\
 &=-a^2y\ee^{2\lambda_\kappa-2\mu_\kappa}\chi,
\end{align}
and
\begin{equation}\label{eq:3-27}
 (3-R_\kappa\mu_\kappa'(R_\kappa))\chi(R_\kappa)
 +R_\kappa\chi'(R_\kappa)=0.
\end{equation}
Equivalently,
\begin{equation}\label{eq:3-added-020}\mathcal{D}_\kappa \chi (R_\kappa) = 0.
\end{equation}
\end{proposition}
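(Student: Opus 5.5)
The plan is to substitute the ansatz $\zeta=\ee^{at}\chi(y)$ into the linearized equation \eqref{eq:3-22} of the previous proposition. Then we rewrite the nonlocal and pressure terms using the identities \eqref{eq:3-24}, \eqref{eq:3-25}, \eqref{eq:3-added-018}, and finally divide by $\ee^{\lambda_\kappa+\mu_\kappa}$.

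First, the time term $y\ee^{3\lambda_\kappa-\mu_\kappa}\ddot\zeta$ becomes $a^2 y\ee^{3\lambda_\kappa-\mu_\kappa}\ee^{at}\chi$. After cancelling $\ee^{at}$ and dividing by $\ee^{\lambda_\kappa+\mu_\kappa}$, it gives $a^2y\ee^{2\lambda_\kappa-2\mu_\kappa}\chi$. Moving it to the other side produces the right-hand side of \eqref{eq:3-26}.

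Second, the integral term is handled exactly by \eqref{eq:3-24}. We replace $\int_0^y s^2f\dd s$ by $-(m_\kappa+4\pi p_\kappa y^3)\chi$. This turns $\ee^{3\lambda_\kappa+\mu_\kappa}\frac{2y\mu_\kappa'+1}{y^2}\int_0^y s^2 f$ into a pure multiplication operator. After division by $\ee^{\lambda_\kappa+\mu_\kappa}$, it equals $-\ee^{2\lambda_\kappa}\frac{2y\mu_\kappa'+1}{y^2}(m_\kappa+4\pi p_\kappa y^3)\chi$. Likewise, the term $8\pi y p_\kappa\ee^{3\lambda_\kappa+\mu_\kappa}\chi$ becomes $8\pi yp_\kappa\ee^{2\lambda_\kappa}\chi$. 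Together these are exactly $V_\kappa\chi$ from \eqref{eq:3-added-019}.

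Third, we treat the pressure term together with $y\ee^{\lambda_\kappa+\mu_\kappa}\mu_\kappa'\chi'$, the latter coming from $y\ee^{3\lambda_\kappa+\mu_\kappa}\ee^{-2\lambda_\kappa}\mu_\kappa'\zeta'$. By \eqref{eq:3-added-017} and \eqref{eq:3-25}, their sum equals $-(\ee^{\lambda_\kappa+\mu_\kappa}P'(\rho_\kappa)\mathcal D_\kappa\chi)'+y\ee^{\lambda_\kappa+\mu_\kappa}\mu_\kappa'\chi'$. Here $\rho$ is eliminated through \eqref{eq:3-20} and \eqref{eq:3-added-012}, which is legitimate because the bracket in \eqref{eq:3-added-010} is assumed to vanish initially. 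Dividing by $\ee^{\lambda_\kappa+\mu_\kappa}$ gives the first two terms of $L_\kappa\chi$. Collecting all pieces yields $L_\kappa\chi=-a^2y\ee^{2\lambda_\kappa-2\mu_\kappa}\chi$.

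The boundary condition \eqref{eq:3-27} follows directly from \eqref{eq:3-23} with $\zeta=\ee^{at}\chi$, since the factor $\ee^{at}\neq0$ cancels. It is just $\mathcal D_\kappa\chi(R_\kappa)=0$ by the definition \eqref{eq:3-added-011}.

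The only delicate point is the bookkeeping of exponential weights. Every term of \eqref{eq:3-22} must carry the common factor $\ee^{\lambda_\kappa+\mu_\kappa}$ after using \eqref{eq:3-added-017}. In particular, $\ee^{2\mu_\kappa+\lambda_\kappa}\Psi_\kappa\rho$ must reduce to $-\ee^{\lambda_\kappa+\mu_\kappa}P'(\rho_\kappa)\mathcal D_\kappa\chi$ via $n_\kappa=\ee^{\mu_\kappa}(\rho_\kappa+p_\kappa)$. I would verify this identity explicitly before dividing.
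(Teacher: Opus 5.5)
Your proposal is correct and follows the paper's proof essentially line by line. In both, you substitute the ansatz into \eqref{eq:3-22}, replace the nonlocal term using \eqref{eq:3-24} and the pressure term using \eqref{eq:3-added-017}--\eqref{eq:3-25}, divide by $\ee^{\lambda_\kappa+\mu_\kappa}$ to read off $V_\kappa$, the first-order term $y\mu_\kappa'\chi'$ and the time weight $y\ee^{2\lambda_\kappa-2\mu_\kappa}$, and obtain \eqref{eq:3-27} from \eqref{eq:3-23} by cancelling $\ee^{at}$. The paper's proof also sketches the converse, reconstructing a growing mode from an eigenfunction via \eqref{eq:3-added-022}, but that is not part of the stated claim.
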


\begin{proof}
Insert \eqref{eq:3-24} and \eqref{eq:3-25} into \eqref{eq:3-22}, divide by
$\ee^{\lambda_\kappa+\mu_\kappa}$, and use
$\ddot\zeta=a^2\zeta$.  The pressure gradient term gives the first term of $L_\kappa$, while the
remaining first-order term is $y\mu_\kappa'\chi'$. Collecting the mass
and pressure terms gives $V_\kappa$ as defined in \eqref{eq:3-added-019}.
The time coefficient becomes
\begin{equation}\label{eq:3-added-021}
 \frac{y\ee^{3\lambda_\kappa-\mu_\kappa}}{\ee^{\lambda_\kappa+\mu_\kappa}}
 =y\ee^{2\lambda_\kappa-2\mu_\kappa},
\end{equation}
therefore \eqref{eq:3-27} follows from \eqref{eq:3-23}.
Conversely, a regular eigenfunction with eigenvalue $-a^2<0$
reconstructs a mode by
\begin{equation}\label{eq:3-added-022}
 \zeta=\ee^{at}\chi,\quad
 u=\ee^{-\mu_\kappa}y\dot\zeta,\quad
 \rho=-(\rho_\kappa+p_\kappa)\mathcal D_\kappa\zeta,\quad
 p=P'(\rho_\kappa)\rho,\quad \lambda=-y\mu_\kappa'\zeta.
\end{equation}
The formulas for $u$, $\rho$ and $p$ in \eqref{eq:3-added-022} follow from
\eqref{eq:3-19} and \eqref{eq:3-added-012}. The formula for $\lambda$ follows from
\eqref{eq:3-14}, \eqref{eq:3-24} and the background identity \eqref{eq:2-7}.
Equation \eqref{eq:3-15} determines $\mu'$, and exterior time normalization
fixes its additive function of time.  Substitution verifies the
linearized constraints and continuity equations \eqref{eq:3-17}--\eqref{eq:3-18},
and the momentum equation is precisely \eqref{eq:3-26}.
\end{proof}

Thus the radial stability problem is reduced to the sign of the lowest
generalized eigenvalue of $L_\kappa$.  The next section constructs
the weight in which this operator is symmetric.

\section{Linear stability analysis}\label{sec4}

We follow the variational argument of \cite{HaoMiao2025} (see also \cite{Lam2024}). First we establish the symmetry of
the radial operator and write its quadratic form. We then find its
lowest eigenvalue by a minimizing sequence.
Expand the operator in \eqref{eq:3-26}
\begin{equation}\label{eq:4-added-001}
 L_{\kappa}\chi
 =-A_\kappa\chi''+B_\kappa\chi'+C_\kappa\chi,
\end{equation}
where
\begin{equation}\label{eq:4-added-002}
\begin{aligned}
 A_\kappa&=P'(\rho_\kappa) y,\\
 B_\kappa&=-y\bigl(P'(\rho_\kappa)\bigr)'-4P'(\rho_\kappa)
 -P'(\rho_\kappa) y\lambda_\kappa'+y\mu_\kappa',\\
 C_\kappa&=-\ee^{-\lambda_\kappa-\mu_\kappa}
 (\ee^{\lambda_\kappa+\mu_\kappa} P'(\rho_\kappa) d_\kappa)'+V_\kappa.
\end{aligned}
\end{equation}
Define the integrating factor
\begin{equation}\label{eq:4-added-003}
 \Lambda_\kappa(y):=
 y^3\ee^{\lambda_\kappa(y)}
 \exp\left(-\int_0^y
 \frac{\mu_\kappa'(s)}{P'(\rho_\kappa(s))}\dd s\right).
\end{equation}
Differentiating \eqref{eq:4-added-003} and using \eqref{eq:4-added-002} gives
\begin{equation}\label{eq:4-added-004}
 \frac{\Lambda_\kappa'}{\Lambda_\kappa}
 =\frac3y+\lambda_\kappa'
 -\frac{\mu_\kappa'}{P'(\rho_\kappa)}
 =-\frac{B_\kappa+A_\kappa'}{A_\kappa},
\end{equation}
and substitution of \eqref{eq:4-added-004} into \eqref{eq:4-added-001} gives
\begin{equation}\label{eq:4-added-005}
 \Lambda_\kappa L_\kappa\chi
 =-(\Lambda_\kappa P'(\rho_\kappa) y\chi')'
 +\Lambda_\kappa C_\kappa\chi.
\end{equation}
Let
\begin{equation}\label{eq:4-added-006}
 \langle f,g\rangle_{\Lambda_\kappa}
 :=\int_0^{R_\kappa}\Lambda_\kappa fg\dd y.
\end{equation}
For two smooth functions satisfying \eqref{eq:3-27}, integration by parts in
\eqref{eq:4-added-005} gives
\begin{equation}\label{eq:4-added-007}
\begin{aligned}
 \langle L_\kappa\chi_1,\chi_2\rangle_{\Lambda_\kappa}
 -\langle\chi_1,L_\kappa\chi_2\rangle_{\Lambda_\kappa}
 &=-\left[\Lambda_\kappa P'(\rho_\kappa) y
 (\chi_1'\chi_2-\chi_2'\chi_1)\right]_0^{R_\kappa}.
\end{aligned}
\end{equation}
The term at the center vanishes because \eqref{eq:4-added-003} gives
$\Lambda_\kappa=O(y^3)$ and the
functions are regular.  At $R_\kappa$, condition \eqref{eq:3-27} says
$R_\kappa\chi_j'=-d_\kappa(R_\kappa)\chi_j$, so the remaining
term also vanishes.  Hence $L_\kappa$ is symmetric in this
weighted inner product. To compute the quadratic form, let
\begin{equation}\label{eq:4-added-008}
 h_\kappa:=\frac3y-\mu_\kappa'
 \left(1+\frac1{P'(\rho_\kappa)}\right).
\end{equation}
We obtain from  \eqref{eq:3-26}
\begin{equation}\label{eq:4-added-009}
\begin{aligned}
 \langle L_\kappa\chi,\chi\rangle_{\Lambda_\kappa}
 &=-\int_0^{R_\kappa}
 \frac{\Lambda_\kappa}{\ee^{\lambda_\kappa+\mu_\kappa}}
 (\ee^{\lambda_\kappa+\mu_\kappa} P'(\rho_\kappa)\mathcal D_\kappa\chi)'\chi\dd y\\
 &\quad+\int_0^{R_\kappa}\Lambda_\kappa y\mu_\kappa'\chi'\chi\dd y
 +\int_0^{R_\kappa}\Lambda_\kappa V_\kappa\chi^2\dd y.
\end{aligned}
\end{equation}
Using the surface condition \eqref{eq:3-added-020}, the boundary contribution
from the first integral in \eqref{eq:4-added-009} is
\begin{equation}\label{eq:4-added-010}
 -[\Lambda_\kappa P'(\rho_\kappa)
 \mathcal D_\kappa\chi\,\chi]_0^{R_\kappa}=0.
\end{equation}
Moreover, \eqref{eq:4-added-004} and \eqref{eq:4-added-008} imply
\begin{equation}\label{eq:4-added-011}
 \left(\frac{\Lambda_\kappa}{\ee^{\lambda_\kappa+\mu_\kappa}}\chi\right)'
 \ee^{\lambda_\kappa+\mu_\kappa}
 =\Lambda_\kappa(\chi'+h_\kappa\chi).
\end{equation}
Integrating \eqref{eq:4-added-009} by parts using the preceding two identities,
and expanding $\mathcal D_\kappa$ by \eqref{eq:3-added-011}, gives
\begin{equation}\label{eq:4-added-012}
\begin{aligned}
 \langle L_\kappa\chi,\chi\rangle_{\Lambda_\kappa}
 &=\int_0^{R_\kappa}\Lambda_\kappa P'(\rho_\kappa)
 (\chi'+h_\kappa\chi)
 (y\chi'+d_\kappa\chi)\dd y\\
 &\quad+\int_0^{R_\kappa}\Lambda_\kappa
 y\mu_\kappa'\chi'\chi\dd y
 +\int_0^{R_\kappa}\Lambda_\kappa V_\kappa\chi^2\dd y.
\end{aligned}
\end{equation}
The coefficient of $\chi\chi'$ is
\begin{equation}\label{eq:4-added-013}
 P'(\rho_\kappa)(d_\kappa+yh_\kappa)+y\mu_\kappa'
 =2P'(\rho_\kappa) d_\kappa.
\end{equation}
Collecting terms in \eqref{eq:4-added-012}, we obtain the exact symmetric quadratic form
\begin{equation}\label{eq:4-1}
 \,
 \begin{aligned}
 \langle L_\kappa\chi,\chi\rangle_{\Lambda_\kappa}
 &=\int_0^{R_\kappa}\frac{\Lambda_\kappa P'(\rho_\kappa)}y
       (y\chi'+d_\kappa\chi)^2\dd y
   +\int_0^{R_\kappa}\Lambda_\kappa
       (V_\kappa-d_\kappa\mu_\kappa')\chi^2\dd y\\
 &=\int_0^{R_\kappa}\Lambda_\kappa P'(\rho_\kappa)
 \left[y(\chi')^2+2d_\kappa\chi\chi'
 +d_\kappa h_\kappa\chi^2\right]\dd y+\int_0^{R_\kappa}\Lambda_\kappa V_\kappa\chi^2\dd y .
 \end{aligned}\,
\end{equation}
Define the weight
\begin{equation}\label{eq:4-added-014}
 W_\kappa(y):=
 \Lambda_\kappa(y)y\ee^{2\lambda_\kappa(y)-2\mu_\kappa(y)}.
\end{equation}
Making use of the symmetry of $L_{\kappa}$, we give a criterion for finding
its smallest eigenvalue in the trial space.

\begin{lemma}\label{lem:lowest-eigenvalue}
Let $H^1_r(B_{R_\kappa}(\mathbb R^5))$ denote the subspace of
spherically symmetric functions in $H^1_0(B_{R_\kappa}(\mathbb R^5))$.
We consider these functions to be functions of one radial variable
$y\in[0,R_\kappa]$ and supported in $[0,R_\kappa)$. In this space we have
\begin{equation}\label{eq:4-added-015}
 \inf_{\substack{\chi\in H^1_r\\\|\chi\|_{W_\kappa}=1}}
 \langle L_\kappa\chi,\chi\rangle_{\Lambda_\kappa}
 =:\nu_*
 =\inf\left\{\nu:\ \exists\,\chi
 \neq0,\
 L_\kappa\chi=\nu y\ee^{2\lambda_\kappa-2\mu_\kappa}\chi\right\},
\end{equation}
where $\|\chi\|_{W_\kappa}^2=\int_0^{R_\kappa}W_\kappa\chi^2\dd y$.
Moreover, the infimum is attained by some $\chi_*$, which is an
eigenfunction of $L_\kappa$ with eigenvalue $\nu_*$.
\end{lemma}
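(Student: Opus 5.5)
The plan is the direct method of the calculus of variations on the stated trial space $H^1_r(B_{R_\kappa}(\mathbb R^5))$, whose elements vanish at $y=R_\kappa$. Eigenfunctions in \eqref{eq:4-added-015} are understood in this same class. The weights fit this space. By \eqref{eq:4-added-003}, and because $\lambda_\kappa,\mu_\kappa$ are bounded and smooth up to the center (Lemma \ref{lemma2.10}), we have $\Lambda_\kappa\simeq y^3$. Since $P'(\rho_\kappa)$ is bounded above and below on the compact fluid range, both $\Lambda_\kappa P'(\rho_\kappa)y$ and $W_\kappa$ are comparable to $y^4$ on $[0,R_\kappa]$. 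Hence $\int_0^{R_\kappa}\Lambda_\kappa P'(\rho_\kappa)y(\chi')^2\dd y$ is equivalent to the Dirichlet energy of the radial function $\chi$ on the ball in $\mathbb R^5$, and $\|\chi\|_{W_\kappa}$ is equivalent to its $L^2(B_{R_\kappa}(\mathbb R^5))$ norm.

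\textbf{Step 1: a G\aa rding inequality.} I will show
\[
\langle L_\kappa\chi,\chi\rangle_{\Lambda_\kappa}\ge c\int_0^{R_\kappa}y^4(\chi')^2\dd y-C\|\chi\|_{W_\kappa}^2,\qquad \chi\in H^1_r,
\]
first for smooth $\chi$ vanishing near $R_\kappa$, then by density for all of $H^1_r$. I start from the first line of \eqref{eq:4-1}. The potential term is harmless: from \eqref{eq:3-added-019} and the central expansions, $V_\kappa=O(y)$ and $\mu_\kappa'=O(y)$, so $\Lambda_\kappa|V_\kappa-d_\kappa\mu_\kappa'|\le Cy^4$. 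For the square I expand
\[
(y\chi'+d_\kappa\chi)^2=y^2(\chi')^2+d_\kappa y(\chi^2)'+d_\kappa^2\chi^2 .
\]
I integrate the middle term by parts with weight $\omega:=\Lambda_\kappa P'(\rho_\kappa)/y$. The boundary terms vanish because $\chi(R_\kappa)=0$ and $\omega y=O(y^3)$ at the center. This leaves the coefficient $\omega d_\kappa^2-(\omega yd_\kappa)'$ in front of $\chi^2$. Both pieces behave like $9P'(\rho_\kappa(0))y^2$ at leading order, so they cancel there. Using the central expansions of Lemma \ref{lemma2.10}, I expect the remainder to be $O(y^3)$, which is $\le Cy^4$ only up to a lower-order piece. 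Any residual weight of size $y^2$ or $y^3$ I will absorb with the radial Hardy inequality in $\mathbb R^5$, $\int y^2\chi^2\le\frac49\int y^4(\chi')^2$, combined with an $\varepsilon$-splitting near $y=0$. Away from the center all coefficients are bounded and the estimate is elementary.

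I expect this cancellation and absorption at the origin to be the main obstacle. I will first try to keep the exact structure $\omega d_\kappa^2-(\omega yd_\kappa)'=\omega d_\kappa(d_\kappa-1-yh_\kappa)-\omega yd_\kappa'$, where I use $(\omega y)'/\omega=1+yh_\kappa$ (from \eqref{eq:4-added-004} and \eqref{eq:4-added-008}), and check it term by term.

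\textbf{Step 2: existence of a minimizer.} Step 1 shows $\nu_*>-\infty$. Take a minimizing sequence with $\|\chi_n\|_{W_\kappa}=1$. Step 1 bounds $\chi_n$ in $H^1_r$. Rellich's theorem on the ball in $\mathbb R^5$ gives a subsequence converging weakly in $H^1$ and strongly in $L^2$, hence in $W_\kappa$-norm, so the limit $\chi_*$ still has $\|\chi_*\|_{W_\kappa}=1$. In \eqref{eq:4-1} the principal part is a nonnegative convex quadratic in $\chi'$. The lower-order parts are continuous under strong $L^2$ convergence combined with weak $H^1$ convergence; for the cross terms this again uses the weight comparisons near $y=0$. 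Weak lower semicontinuity then gives $\langle L_\kappa\chi_*,\chi_*\rangle_{\Lambda_\kappa}=\nu_*$.

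\textbf{Step 3: the Euler--Lagrange equation and equality of the two infima.} Varying $\chi_*+\varepsilon\phi$ with $\phi\in H^1_r$ and using the Lagrange multiplier for the constraint gives the weak equation
\[
B(\chi_*,\phi)=\nu_*\int_0^{R_\kappa}W_\kappa\chi_*\phi\dd y,
\]
where $B$ is the bilinear form of \eqref{eq:4-1}. On $(0,R_\kappa)$ the coefficients are regular and $A_\kappa>0$, so ODE regularity makes $\chi_*$ a classical solution of $L_\kappa\chi_*=\nu_*y\ee^{2\lambda_\kappa-2\mu_\kappa}\chi_*$ by \eqref{eq:4-added-005}. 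At the center, regularity follows from radial elliptic regularity in $\mathbb R^5$, since the operator is a bounded perturbation of the $5$-dimensional Laplacian. Thus $\nu_*$ belongs to the eigenvalue set in \eqref{eq:4-added-015}, and the infimum of that set is $\le\nu_*$.

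Conversely, let $\chi\ne0$ be any eigenfunction in $H^1_r$ with eigenvalue $\nu$. I pair the equation with $\Lambda_\kappa\chi$ and integrate by parts as in \eqref{eq:4-added-009}--\eqref{eq:4-1}. The boundary terms vanish: at $R_\kappa$ because $\chi(R_\kappa)=0$, and at the center because $\Lambda_\kappa=O(y^3)$. This gives $\langle L_\kappa\chi,\chi\rangle_{\Lambda_\kappa}=\nu\|\chi\|_{W_\kappa}^2$, hence $\nu\ge\nu_*$. The two infima therefore coincide, and $\chi_*$ attains them.
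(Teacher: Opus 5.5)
Your proposal is essentially the paper's proof. Both obtain a G\aa rding bound by expanding $(y\chi'+d_\kappa\chi)^2$ and integrating the cross term by parts against $\chi(R_\kappa)=0$, so that the $y^2$-weighted singular terms cancel. Both then use Rellich compactness for radial $H^1$ functions on the ball in $\mathbb R^5$, weak lower semicontinuity, and the Euler--Lagrange equation with regularity.

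The differences are minor. You integrate the whole cross term, whereas the paper integrates only the $6\chi\chi'$ part and uses Cauchy--Schwarz on $-2y\mu_\kappa'\chi\chi'$. Your explicit proof of the reverse inequality, with eigenfunctions taken in $H^1_r$, fills in what the paper calls clear. That reading is the one under which $\nu_*\le\inf\{\nu\}$ actually holds, since the Robin form lives on all of $H^1$ and its eigenvalues can lie below $\nu_*$.

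However, one identity in Step 1 is wrong, and using it as written would break your term-by-term check. Since $\omega y=\Lambda_\kappa P'(\rho_\kappa)$, \eqref{eq:4-added-004} gives
\[
\frac{(\omega y)'}{\omega}=3+y\lambda_\kappa'-\frac{y\mu_\kappa'}{P'(\rho_\kappa)}+\frac{y\,(P'(\rho_\kappa))'}{P'(\rho_\kappa)},
\]
whereas $1+yh_\kappa=4-y\mu_\kappa'\bigl(1+1/P'(\rho_\kappa)\bigr)$. With your identity, $d_\kappa-1-yh_\kappa=-1+y\mu_\kappa'/P'(\rho_\kappa)$. The coefficient would then keep a term of size about $-3P'(\rho_\kappa(0))y^2$. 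This contradicts the leading-order cancellation you correctly state, and Hardy's inequality could not absorb it, since $3\cdot\frac49>1$.

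With the correct formula, $d_\kappa-(\omega y)'/\omega=y\mu_\kappa'/P'(\rho_\kappa)-y(\lambda_\kappa'+\mu_\kappa')-y(P'(\rho_\kappa))'/P'(\rho_\kappa)=O(y^2)$ and $d_\kappa'=O(y)$. Hence $\omega d_\kappa^2-(\omega yd_\kappa)'=O(y^4)$. This matches the paper's bracket in \eqref{eq:4-6} once its remaining cross term is integrated. So the remainder is $O(y^4)$, not $O(y^3)$, and no Hardy absorption is needed.
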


\begin{proof}
It is clear that
\begin{equation}\label{eq:4-added-016}
 \nu_*=\inf_{\substack{\chi\in H^1_r\\\|\chi\|_{W_\kappa}=1}}
  \langle L_\kappa\chi,\chi\rangle_{\Lambda_\kappa}
 \le\inf\left\{\nu:\ \exists\,\chi\neq0,\
 L_\kappa\chi=\nu y\ee^{2\lambda_\kappa-2\mu_\kappa}\chi\right\}.
\end{equation}
To prove equality, it suffices to show that $\nu_*$ is an eigenvalue.
Pick $\chi_n\in H^1_r$ with $\|\chi_n\|_{W_\kappa}=1$ such that
$\langle L_\kappa\chi_n,\chi_n\rangle_{\Lambda_\kappa}\to\nu_*$. We first compute the coefficients
needed to bound this minimizing sequence.
For a fixed steady state, the static equations
\eqref{eq:1-20}--\eqref{eq:1-23}, together with
\eqref{eq:2-3}--\eqref{eq:2-4}, give
\begin{align}\label{eq:4-2}
 \lambda_\kappa'
 &=\ee^{2\lambda_\kappa}
 \left(4\pi y\rho_\kappa-\frac{m_\kappa}{y^2}\right),\notag\\
 \lambda_\kappa''
 &=2\ee^{4\lambda_\kappa}
 \left(4\pi y\rho_\kappa-\frac{m_\kappa}{y^2}\right)^2
 -4\pi y\ee^{2\lambda_\kappa}
 \frac{\rho_\kappa+p_\kappa}{P'(\rho_\kappa)}\mu_\kappa'
 +2\ee^{2\lambda_\kappa}\frac{m_\kappa}{y^3},\notag\\
 \mu_\kappa'
 &=\ee^{2\lambda_\kappa}
 \left(4\pi yp_\kappa+\frac{m_\kappa}{y^2}\right),\notag\\
 \mu_\kappa''
 &=2\lambda_\kappa'\mu_\kappa'
 +\ee^{2\lambda_\kappa}\left[
 4\pi(\rho_\kappa+p_\kappa)-\frac{2m_\kappa}{y^3}
 -4\pi y(\rho_\kappa+p_\kappa)\mu_\kappa'\right].
\end{align}
Since
$P'(\rho_\kappa)>0$ is continuous on the compact density range of one
fixed star, it has a positive minimum. Let $C_1>0$ be a lower bound
on this density range. 
Regularity at the center and
\eqref{eq:4-2} imply
\begin{equation}\label{eq:4-3}
 \left|\rho_\kappa,p_\kappa,\lambda_\kappa,\mu_\kappa,
 \frac{m_\kappa}{y^3},
 \frac{\lambda_\kappa'}y,\frac{\mu_\kappa'}y,
 \lambda_\kappa'',\mu_\kappa''\right|\le C,
 \qquad
 0<C_1\le P'(\rho_\kappa)\le1.
\end{equation}
From \eqref{eq:4-added-003}, \eqref{eq:4-added-014} and \eqref{eq:4-3},
\begin{equation}\label{eq:4-added-017}
 cy^3\le \Lambda_\kappa(y)\le Cy^3,
 \qquad
 cy^4\le W_\kappa(y)\le Cy^4.
\end{equation}
Furthermore, the static Euler equation \eqref{eq:1-23} gives
$P'(\rho_\kappa)\rho_\kappa'=-(\rho_\kappa+p_\kappa)\mu_\kappa'$.
Thus $\rho_\kappa'=O(y)$ and, since $P''$ is bounded on the density range,
$\bigl(P'(\rho_\kappa)\bigr)'=P''(\rho_\kappa)\rho_\kappa'=O(y)$.

We first estimate the lower-order integral in the square representation
of \eqref{eq:4-1}. By \eqref{eq:4-3}, the potential
\eqref{eq:3-added-019}, and the coefficient $d_\kappa$ in
\eqref{eq:3-added-016},
\begin{equation}\label{eq:4-added-018}
 |V_\kappa(y)|\le Cy,\qquad
 |\mu_\kappa'(y)|\le Cy,\qquad |d_\kappa(y)|\le C,
 \qquad 0\le y\le R_\kappa.
\end{equation}
Combining \eqref{eq:4-added-017} and \eqref{eq:4-added-018} gives
\begin{align}\label{eq:4-4}
 &|\Lambda_\kappa(V_\kappa-d_\kappa\mu_\kappa')|
 \le \Lambda_\kappa(|V_\kappa|+|d_\kappa||\mu_\kappa'|)
 \le Cy^4,\notag\\
 &\left|\int_0^{R_\kappa}\Lambda_\kappa
 (V_\kappa-d_\kappa\mu_\kappa')\chi^2\dd y\right|
 \le C\int_0^{R_\kappa}y^4\chi^2\dd y
 \le C\|\chi\|_{W_\kappa}^2.
\end{align}
Under the normalization $\|\chi\|_{W_\kappa}=1$, \eqref{eq:4-4}
bounds the lower-order integral by $C$, while the square integral in
\eqref{eq:4-1} is nonnegative.
Hence $\nu_*$ is finite. We now consider the main part of the functional.
Since $d_\kappa=3-y\mu_\kappa'$, its expansion is
\begin{equation}\label{eq:4-5}
 \begin{aligned}
 &\int_0^{R_\kappa}\frac{\Lambda_\kappa P'(\rho_\kappa)}y
          (y\chi'+d_\kappa\chi)^2\dd y\\
 &=\int_0^{R_\kappa}\Lambda_\kappa P'(\rho_\kappa)y(\chi')^2\dd y
   +6\int_0^{R_\kappa}\Lambda_\kappa P'(\rho_\kappa)\chi\chi'\dd y\\
 &\quad-2\int_0^{R_\kappa}\Lambda_\kappa P'(\rho_\kappa)y\mu_\kappa'
\chi\chi'\dd y+\int_0^{R_\kappa}\Lambda_\kappa P'(\rho_\kappa)
          \left(\frac9y-6\mu_\kappa'+y(\mu_\kappa')^2\right)\chi^2\dd y.
 \end{aligned}
\end{equation}
Integration by parts gives
\begin{equation}\label{eq:4-added-019}
 \begin{aligned}
 6\int_0^{R_\kappa}\Lambda_\kappa P'(\rho_\kappa)\chi\chi'\dd y
 &=3\Lambda_\kappa(R_\kappa)P'(\rho_0)\chi(R_\kappa)^2-3\int_0^{R_\kappa}
              (\Lambda_\kappa P'(\rho_\kappa))'\chi^2\dd y.
 \end{aligned}
\end{equation}
Substituting \eqref{eq:4-added-019} into \eqref{eq:4-5}, using the logarithmic
derivative \eqref{eq:4-added-004}, and then using $\chi(R_\kappa)=0$, we obtain
\begin{equation}\label{eq:4-6}
 \begin{aligned}
 &\int_0^{R_\kappa}\frac{\Lambda_\kappa P'(\rho_\kappa)}y
             (y\chi'+d_\kappa\chi)^2\dd y\\
 &=\int_0^{R_\kappa}\Lambda_\kappa P'(\rho_\kappa)y(\chi')^2\dd y
   -2\int_0^{R_\kappa}\Lambda_\kappa P'(\rho_\kappa)y\mu_\kappa'
                       \chi\chi'\dd y\\
 &\quad+\int_0^{R_\kappa}\Lambda_\kappa P'(\rho_\kappa)
 \left[y(\mu_\kappa')^2-6\mu_\kappa'-3\lambda_\kappa'
       +\frac{3\mu_\kappa'}{P'(\rho_\kappa)}
       -\frac{3(P'(\rho_\kappa))'}{P'(\rho_\kappa)}\right]\chi^2\dd y.
 \end{aligned}
\end{equation}
The last integral is bounded in absolute value by
$C\int_0^{R_\kappa}y^4\chi^2\dd y$.
For the derivative term, $\Lambda_\kappa/y^3$ is positive and continuous
up to the center. Together with $C_1\le P'(\rho_\kappa)\le1$, this gives
\begin{equation}\label{eq:4-added-020}
 \begin{aligned}
 \Lambda_\kappa P'(\rho_\kappa)y&\ge Cy^4,\\
 \int_0^{R_\kappa}y^4(\chi')^2\dd y
 &\le C\int_0^{R_\kappa}\Lambda_\kappa P'(\rho_\kappa)y(\chi')^2\dd y.
 \end{aligned}
\end{equation}
For the cross term, Cauchy-Schwarz inequality gives, for any $\varepsilon>0$,
\begin{equation}\label{eq:4-added-021}
 \begin{aligned}
 \left|2\int_0^{R_\kappa}\Lambda_\kappa P'(\rho_\kappa)y\mu_\kappa'
                   \chi\chi'\dd y\right|
 &\le\varepsilon\int_0^{R_\kappa}\Lambda_\kappa P'(\rho_\kappa)y(\chi')^2\dd y
       +C_\varepsilon\int_0^{R_\kappa}y^4\chi^2\dd y.
 \end{aligned}
\end{equation}
Combining \eqref{eq:4-6} and \eqref{eq:4-added-021} with
\eqref{eq:4-1} and \eqref{eq:4-4} gives
\begin{equation}\label{eq:4-added-022}
 (1-\varepsilon)\int_0^{R_\kappa}
 \Lambda_\kappa P'(\rho_\kappa)y(\chi_n')^2\dd y
 \le |\langle L_\kappa\chi_n,\chi_n\rangle_{\Lambda_\kappa}|+C_\varepsilon
                 \int_0^{R_\kappa}y^4\chi_n^2\dd y.
\end{equation}
Choose $\varepsilon$ small enough. Since $\|\chi_n\|_{W_\kappa}=1$ and the
energies of the minimizing sequence are bounded,
\eqref{eq:4-added-017}, \eqref{eq:4-added-020} and
\eqref{eq:4-added-022} yield
\begin{equation}\label{eq:4-added-023}
 \begin{aligned}
 \int_0^{R_\kappa}y^4(\chi_n')^2\dd y
 &\le C\int_0^{R_\kappa}\Lambda_\kappa P'(\rho_\kappa)y(\chi_n')^2\dd y
 \le C,\\
 \int_0^{R_\kappa}y^4\chi_n^2\dd y
 &\le C\|\chi_n\|_{W_\kappa}^2=C.
 \end{aligned}
\end{equation}
By \eqref{eq:4-added-023}, interpreting $\chi_n$ as radial functions on
the five-dimensional ball shows that the sequence is
bounded in $H^1(B_{R_\kappa}(\mathbb R^5))$. After passing to a
subsequence,
\begin{equation}\label{eq:4-added-024}
 \chi_n\rightharpoonup\chi_*
 \quad\hbox{in }H^1,\qquad
 \chi_n\to\chi_*
 \quad\hbox{in }L^2.
\end{equation}
Rellich-Kondrachov compactness gives the strong convergence. The
weight comparisons \eqref{eq:4-added-017} and the convergence
\eqref{eq:4-added-024} imply that
$\int W_\kappa\chi_*^2=1$. The first integral on the right hand side of \eqref{eq:4-6} is weakly
lower semicontinuous. The cross term converges by the strong and weak convergences in
\eqref{eq:4-added-024}, since the coefficients divided by the radial volume weights are bounded.
All remaining integrals converge by \eqref{eq:4-3}--\eqref{eq:4-4}.
Therefore
\begin{equation}\label{eq:4-added-025}
 \langle L_\kappa\chi_*,\chi_*\rangle_{\Lambda_\kappa}
 \le\liminf_{n\to\infty}\langle L_\kappa\chi_n,\chi_n\rangle_{\Lambda_\kappa}=\nu_*.
\end{equation}
The reverse inequality follows from the definition \eqref{eq:4-added-015},
so $\chi_*$ attains
the infimum. For any $h\in H^1_r$, we have
\begin{equation}\label{eq:4-added-026}
\begin{aligned}
 0
 &=\left.\frac{\dd}{\dd\varepsilon}
 \frac{\langle L_\kappa(\chi_*+\varepsilon h),\chi_*+\varepsilon h\rangle_{\Lambda_\kappa}}
      {\int_0^{R_\kappa}W_\kappa(\chi_*+\varepsilon h)^2\dd y}
 \right|_{\varepsilon=0}\\
 &=\left.\frac{\dd}{\dd\varepsilon}
 \frac{\langle L_\kappa\chi_*,\chi_*\rangle_{\Lambda_\kappa}
       +2\varepsilon\langle L_\kappa\chi_*,h\rangle_{\Lambda_\kappa}
       +\varepsilon^2\langle L_\kappa h,h\rangle_{\Lambda_\kappa}}
      {1+2\varepsilon\int_0^{R_\kappa}W_\kappa\chi_*h\dd y
        +\varepsilon^2\int_0^{R_\kappa}W_\kappa h^2\dd y}
 \right|_{\varepsilon=0}\\
 &=2\langle L_\kappa\chi_*,h\rangle_{\Lambda_\kappa}
   -2\nu_*\int_0^{R_\kappa}W_\kappa\chi_*h\dd y.
\end{aligned}
\end{equation}
Thus $\langle L_\kappa\chi_*,h\rangle_{\Lambda_\kappa}
=\nu_*\int_0^{R_\kappa}W_\kappa\chi_*h\dd y$ for any $h\in H^1_r$.
Using \eqref{eq:4-added-014} in the variational identity
\eqref{eq:4-added-026}, we obtain, weakly,
\begin{equation}\label{eq:4-added-027}
 L_\kappa\chi_*
 =\nu_*y\ee^{2\lambda_\kappa-2\mu_\kappa}\chi_*.
\end{equation}
By elliptic regularity, $\chi_*$ belongs to \(C^2([0,R_\kappa])\), and the weak solution is a classical solution there. Therefore, $\chi_*$  is, in fact, an eigenfunction of \(L_\kappa\) with eigenvalue $\nu_*$, which completes the proof.
\end{proof}

\begin{remark}\label{rem}
Lemma~\ref{lem:lowest-eigenvalue} is used only in the proof of linear
instability. A negative value of the quadratic form is sufficient to obtain a negative generalized eigenvalue of the
Robin problem and hence a growing mode (see Proposition \ref{prop:large-redshift}). For the linear stability and turning point arguments, we retain the
Robin boundary condition \eqref{eq:3-27}, with no prescribed zero trace at $R_\kappa$. The zero trace restriction above applies only to the auxiliary trial
functions and imposes no additional boundary condition on the physical
perturbations.
\end{remark}

\begin{corollary}\label{coro3}
If $ \langle L_\kappa\chi,\chi\rangle_{\Lambda_\kappa}\ge0$ for every regular radial $\chi$ satisfying
\eqref{eq:3-27}, the
steady state has no exponentially growing radial mode.  Conversely, if
some $\chi\in H^1_r(B_{R_\kappa}(\mathbb R^5))$ satisfies
$ \langle L_\kappa\chi,\chi\rangle_{\Lambda_\kappa}<0$, the steady state is linearly unstable.
\end{corollary}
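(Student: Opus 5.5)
The corollary has two directions, and I will handle them separately, using the reduction of Proposition~\ref{prop:radial-operator} together with Lemma~\ref{lem:lowest-eigenvalue}.

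\emph{First direction (no growing mode).} Suppose there is a growing mode $\zeta=\ee^{at}\chi$ with $a>0$. By Proposition~\ref{prop:radial-operator}, $\chi$ is a nonzero regular radial function that satisfies \eqref{eq:3-27} and solves $L_\kappa\chi=-a^2y\ee^{2\lambda_\kappa-2\mu_\kappa}\chi$.
\begin{enumerate}[label=(\alph*),leftmargin=2em]
\item Multiply this equation by $\Lambda_\kappa\chi$ and integrate over $[0,R_\kappa]$. Using \eqref{eq:4-added-014}, this gives
\[
\langle L_\kappa\chi,\chi\rangle_{\Lambda_\kappa}=-a^2\int_0^{R_\kappa}W_\kappa\chi^2\dd y .
\]
\item The left side equals the quadratic form \eqref{eq:4-1}. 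The integrations by parts leading to \eqref{eq:4-1} are legitimate for such $\chi$: the central boundary term vanishes because $\Lambda_\kappa=O(y^3)$, and the surface term vanishes by \eqref{eq:3-added-020}, exactly as in \eqref{eq:4-added-010}.
\item By \eqref{eq:4-added-017}, $W_\kappa>0$ for $y>0$, and $\chi\not\equiv0$. So the right side is strictly negative, which contradicts the hypothesis. Hence there is no exponentially growing radial mode.
\end{enumerate}

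\emph{Second direction (instability).} Suppose some $\chi\in H^1_r$ has $\langle L_\kappa\chi,\chi\rangle_{\Lambda_\kappa}<0$.
\begin{enumerate}[label=(\alph*),leftmargin=2em]
\item Normalize $\chi$ so that $\|\chi\|_{W_\kappa}=1$. Then $\nu_*<0$ in \eqref{eq:4-added-015}.
\item Lemma~\ref{lem:lowest-eigenvalue} gives a minimizer $\chi_*\in C^2$ with $L_\kappa\chi_*=\nu_*y\ee^{2\lambda_\kappa-2\mu_\kappa}\chi_*$.
\item Set $a=\sqrt{-\nu_*}>0$. The converse part of Proposition~\ref{prop:radial-operator} then reconstructs the mode $\zeta=\ee^{at}\chi_*$, together with $u,\rho,p,\lambda$ from \eqref{eq:3-added-022}. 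This is an exponentially growing mode, so the steady state is linearly unstable.
\end{enumerate}

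\emph{Main obstacle: the boundary condition of $\chi_*$.} The minimization in Lemma~\ref{lem:lowest-eigenvalue} is over a zero-trace class. The Euler--Lagrange equation \eqref{eq:4-added-026} is tested only against $h$ with $h(R_\kappa)=0$, so it yields the interior ODE and $\chi_*(R_\kappa)=0$, not the Robin condition \eqref{eq:3-27}. Remark~\ref{rem} asserts that this restriction is harmless, so I take the following route.
\begin{enumerate}[label=(\alph*),leftmargin=2em]
\item Observe that $H^1_r$ functions automatically lie in the natural form domain of the Robin problem. For the Robin problem, the form \eqref{eq:4-1} on $H^1$ is exactly the quadratic form whose Euler--Lagrange natural boundary condition is $\mathcal D_\kappa\chi(R_\kappa)=0$.
\item Minimize \eqref{eq:4-1} over all radial $H^1$ functions with no trace condition, normalized by $\|\cdot\|_{W_\kappa}=1$.
\item The compactness argument of Lemma~\ref{lem:lowest-eigenvalue} goes through unchanged. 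The one new term is the surface term in \eqref{eq:4-added-019}, which is controlled by the trace inequality.
\item The resulting infimum is at most the negative value $\langle L_\kappa\chi,\chi\rangle_{\Lambda_\kappa}$ attained by our trial $\chi$, hence negative.
\item Varying the minimizer against arbitrary $h\in H^1$ produces the boundary term $\Lambda_\kappa P'(\rho_0)\,\mathcal D_\kappa\chi_*(R_\kappa)\,h(R_\kappa)$. Its vanishing for all $h$ gives \eqref{eq:3-27}.
\end{enumerate}
This yields a genuine Robin eigenfunction with negative eigenvalue, to which step (c) of the second direction applies.
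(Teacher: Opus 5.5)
Your proof is correct. It follows the paper's skeleton: a variational characterization of the lowest eigenvalue, then reconstruction of the mode through \eqref{eq:3-added-022}. But you handle the boundary condition more carefully than the paper does.

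The paper's proof only cites \eqref{eq:4-added-015} and \eqref{eq:3-added-022}. Lemma~\ref{lem:lowest-eigenvalue} minimizes over the zero-trace class, so its minimizer satisfies $\chi_*(R_\kappa)=0$ and is a Dirichlet eigenfunction. It cannot also satisfy \eqref{eq:3-27} unless $\chi_*\equiv0$, since then $\chi_*(R_\kappa)=\chi_*'(R_\kappa)=0$ at a regular point of the ODE (the same argument as in \eqref{eq:5-added-030}). Feeding it into \eqref{eq:3-added-022} would therefore violate the linearized surface condition $\rho(R_\kappa)=0$. The paper only asserts, in Remark~\ref{rem}, that a negative form value suffices for the Robin problem. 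Your replacement is exactly what justifies that remark: you minimize the first expression in \eqref{eq:4-1} over all radial $H^1(B_{R_\kappa}(\mathbb R^5))$ with no trace condition. The integration by parts in \eqref{eq:5-added-026} shows that the natural boundary condition of this minimization is $\mathcal D_\kappa\chi(R_\kappa)=0$, and $H^1_r\subset H^1$ makes the infimum negative.

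The same care pays off in your first direction. Your direct computation with $\Lambda_\kappa\chi$ is what actually uses the hypothesis as stated. By contrast, \eqref{eq:4-added-015} concerns zero-trace trial functions and does not by itself control Robin eigenvalues: the Robin infimum is at most the Dirichlet one, not at least.

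One small refinement of your step (c). The surface term $3\Lambda_\kappa(R_\kappa)P'(\rho_0)\chi(R_\kappa)^2$ generated by \eqref{eq:4-added-019} is nonnegative, so you can simply drop it in the coercivity bound \eqref{eq:4-added-022}; no trace inequality is needed there. What you need from the trace is only $\chi_n(R_\kappa)\to\chi_*(R_\kappa)$ for the lower semicontinuity step. This follows from the compact embedding of $H^1$ into continuous functions on $[R_\kappa/2,R_\kappa]$, where the weight $y^4$ is comparable to a constant.
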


\begin{proof}
The eigenvalue criterion is \eqref{eq:4-added-015}, and a negative eigenvalue
of \eqref{eq:3-26} gives the growing mode reconstructed in
\eqref{eq:3-added-022}.
\end{proof}

\begin{proposition}\label{prop:small-redshift}
For sufficiently small central redshift $\kappa>0$, the
generalized problem \eqref{eq:3-26} has no negative eigenvalue.  The corresponding steady
state is linearly stable under radial perturbations.
\end{proposition}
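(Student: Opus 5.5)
The plan is to show directly from the square representation \eqref{eq:4-1} that $\langle L_\kappa\chi,\chi\rangle_{\Lambda_\kappa}\ge0$ for every regular radial $\chi$ satisfying \eqref{eq:3-27}. Corollary \ref{coro3} then excludes growing modes. Alternatively, if $L_\kappa\chi=\nu y\ee^{2\lambda_\kappa-2\mu_\kappa}\chi$ with $\chi\neq0$, pairing with $\chi$ gives $\nu\|\chi\|_{W_\kappa}^2=\langle L_\kappa\chi,\chi\rangle_{\Lambda_\kappa}\ge0$. Because \eqref{eq:4-1} was derived using only the Robin condition, no zero trace is needed. The mechanism is that the star shrinks as $\kappa\to0$: the zeroth-order term carries an extra factor $y^2\le R_\kappa^2$ relative to the square term, and a Hardy inequality makes it small.

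\emph{Step 1: uniform bounds for $0<\kappa\le1$.} In the original convention $\bar y_\kappa(0)=\kappa$, we have $\rho_\kappa\in[\rho_0,Q^{-1}(1)]$, hence $p_\kappa\le C$ and $m_\kappa\le Cy^3$. By \eqref{eq:2-14}, $\bar y_\kappa(y)\le\kappa-\frac{2\pi\rho_0}{3}y^2$, so $R_\kappa^2\le 3\kappa/(2\pi\rho_0)\to0$. Then $2m_\kappa/y\le CR_\kappa^2$ is small, so $1\le\ee^{2\lambda_\kappa}\le2$, $0\le\mu_\kappa'\le Cy$, $|\lambda_\kappa'|\le Cy$, and $|V_\kappa|\le Cy$ by \eqref{eq:3-added-019}. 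Since $P'(\rho_\kappa)\ge c_0>0$ on this fixed compact density range, the exponent in \eqref{eq:4-added-003} is bounded by $CR_\kappa^2/c_0$. Hence $cy^3\le\Lambda_\kappa\le Cy^3$, with constants independent of $\kappa\in(0,1]$. Consequently $\Lambda_\kappa P'(\rho_\kappa)/y\ge c y^2$ and $|\Lambda_\kappa(V_\kappa-d_\kappa\mu_\kappa')|\le Cy^4$.

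\emph{Step 2: Hardy inequality.} Write $y\chi'+d_\kappa\chi=(y\chi'+3\chi)-y\mu_\kappa'\chi$ with $|y\mu_\kappa'\chi|\le Cy^2|\chi|$. Then
\[
(y\chi'+d_\kappa\chi)^2\ge\tfrac12(y\chi'+3\chi)^2-Cy^4\chi^2 .
\]
Set $\varphi=y^3\chi$, so that $y\chi'+3\chi=y^{-2}\varphi'$ and $\varphi(0)=0$ by regularity. Cauchy--Schwarz gives
\[
\varphi(y)^2\le\int_0^y\frac{\varphi'^2}{s^2}\dd s\int_0^ys^2\dd s\le\frac{y^3}{3}\int_0^y\frac{\varphi'^2}{s^2}\dd s .
\]
Integrating against $y^{-2}$ over $[0,R_\kappa]$ yields
\[
\int_0^{R_\kappa}y^4\chi^2\dd y=\int_0^{R_\kappa}\frac{\varphi^2}{y^2}\dd y\le\frac{R_\kappa^2}{6}\int_0^{R_\kappa}y^2(y\chi'+3\chi)^2\dd y .
\]

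\emph{Step 3: conclusion.} Combining Steps 1 and 2 in \eqref{eq:4-1},
\[
\langle L_\kappa\chi,\chi\rangle_{\Lambda_\kappa}\ge\Bigl(\frac c2-C'R_\kappa^2\Bigr)\int_0^{R_\kappa}y^2(y\chi'+3\chi)^2\dd y ,
\]
which is nonnegative once $R_\kappa^2<c/(2C')$, i.e.\ for $\kappa$ small. The main obstacle is Step 1, namely making every constant uniform as $\kappa\to0$. The degenerating quantity is $R_\kappa$, which is exactly what helps, but one must check that $\Lambda_\kappa/y^3$, $P'(\rho_\kappa)$ and $\ee^{2\lambda_\kappa}$ stay bounded above and below independently of $\kappa$. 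The estimate $R_\kappa^2\le 3\kappa/(2\pi\rho_0)$ and the fixed density range $[\rho_0,Q^{-1}(1)]$ are designed to secure this.
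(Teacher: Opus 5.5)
Your proof is correct, and it takes a genuinely different route from the paper at the key step.

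\textbf{The paper's route.} The paper integrates the cross term $2d_\kappa\chi\chi'$ in the second form of \eqref{eq:4-1} by parts. This gives \eqref{eq:4-added-032}, which has three pieces: the principal term $\int\Lambda_\kappa P'(\rho_\kappa)y(\chi')^2$, a bulk potential $\Lambda_\kappa C_\kappa$, and a positive surface term $\Lambda_\kappa(R_\kappa)P'(\rho_0)d_\kappa(R_\kappa)\chi(R_\kappa)^2\approx3P'(\rho_0)R_\kappa^3\chi(R_\kappa)^2$. It then bounds $|\Lambda_\kappa C_\kappa|\le Cy^4$ and rescales to $[0,1]$. Finally it absorbs the $O(R_\kappa^5)$ terms using the Poincar\'e--Hardy inequality \eqref{eq:4-8} quoted from \cite{Lam2024}, whose right-hand side needs the boundary value $\widetilde\chi(1)^2$.

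\textbf{Your route.} You keep the square form and compare $y\chi'+d_\kappa\chi$ with $y\chi'+3\chi=y^{-2}(y^3\chi)'$. You then use a Hardy inequality anchored at the center ($\varphi(0)=0$), which needs no boundary term and has the explicit constant $R_\kappa^2/6$.

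\textbf{What each buys.} Your argument needs only first-order background bounds: $|\mu_\kappa'|,|\lambda_\kappa'|,|V_\kappa|\le Cy$ and $\Lambda_\kappa\asymp y^3$. The paper's $C_\kappa$ contains $d_\kappa'=-\mu_\kappa'-y\mu_\kappa''$ and $(P'(\rho_\kappa))'=P''(\rho_\kappa)\rho_\kappa'$, so it needs uniform bounds on $\mu_\kappa''$ and on $P''$ over the density range. You also never touch the surface term, and your Hardy inequality is proved in two lines rather than cited. The paper's route in exchange gives coercivity in the natural energy norm $\int y^4(\chi')^2+R_\kappa^3\chi(R_\kappa)^2$. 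It makes the positive surface contribution of the Robin condition visible and parallels the Newtonian argument of \cite{Lam2024}.

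One small point: your bound $1\le e^{2\lambda_\kappa}\le2$ requires $\kappa$ small, not merely $\kappa\le1$. This is harmless, since the statement only concerns small $\kappa$.
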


\begin{proof}
When $0\le \bar{y}\le\kappa$, the functions $g(\bar{y})$ and $h(\bar{y})$ in \eqref{eq:2-1}--\eqref{eq:2-2}
converge uniformly to $\rho_0$ and $0$ as $\kappa\to0$. Evaluating
\eqref{eq:2-added-017} at $R_\kappa$ and using \eqref{eq:2-6} gives
\begin{equation}\label{eq:4-added-028}
 0=\bar{y}(R_\kappa)
 \le\kappa-\frac{2\pi\rho_0}{3}R_\kappa^2,
\end{equation}
so $R_\kappa\to0$.  It follows that, uniformly on the shrinking star,
\begin{equation}\label{eq:4-added-029}
 \rho_\kappa\to\rho_0,\qquad
 p_\kappa\to0,\qquad
 P'(\rho_\kappa)\to P'(\rho_0)>0,
 \qquad \lambda_\kappa\to0.
\end{equation}
The identities \eqref{eq:4-2}, together with \eqref{eq:4-added-029},
now give, with constants independent of sufficiently small $\kappa$,
\begin{equation}\label{eq:4-added-030}
 \left|\frac{m_\kappa}{y^3}\right|
 +\left|\frac{\lambda_\kappa'}y\right|
 +\left|\frac{\mu_\kappa'}y\right|
 +|\lambda_\kappa''|+|\mu_\kappa''|\le C,
 \qquad 0<C_1\le P'(\rho_\kappa)\le1.
\end{equation}
On the common compact density range for sufficiently small $\kappa$,
$P''$ is bounded. The static Euler equation therefore gives
$|(P'(\rho_\kappa))'|=|P''(\rho_\kappa)\rho_\kappa'|\le Cy$,
with $C$ independent of such $\kappa$.
Consequently, applying \eqref{eq:4-added-030} to the definitions
\eqref{eq:4-added-003}, \eqref{eq:3-added-016} and
\eqref{eq:4-added-008} gives
\begin{equation}\label{eq:4-added-031}
 \Lambda_\kappa=y^3(1+O(y^2)),\qquad
 d_\kappa=3+O(y^2),\qquad
 h_\kappa=\frac3y+O(y).
\end{equation}
Integrating the cross term in \eqref{eq:4-1} gives
\begin{equation}\label{eq:4-added-032}
 \langle L_\kappa\chi,\chi\rangle_{\Lambda_\kappa}
 =\int_0^{R_\kappa}\Lambda_\kappa P'(\rho_\kappa)y(\chi')^2\dd y
 +\int_0^{R_\kappa}\Lambda_\kappa C_\kappa\chi^2\dd y
 +\Lambda_\kappa(R_\kappa)P'(\rho_0)d_\kappa(R_\kappa)\chi(R_\kappa)^2.
\end{equation}
The principal coefficient obeys
$\Lambda_\kappa P'(\rho_\kappa)y\ge cy^4$.
To estimate the bulk coefficient here, expand the definition of
$C_\kappa$ in \eqref{eq:4-added-002}
\begin{equation}\label{eq:4-added-033}
 \begin{aligned}
 \Lambda_\kappa C_\kappa
 &=\Lambda_\kappa V_\kappa
 -\Lambda_\kappa P'(\rho_\kappa)
 \left[d_\kappa\left(\lambda_\kappa'+\mu_\kappa'
 +\frac{(P'(\rho_\kappa))'}{P'(\rho_\kappa)}\right)
 +d_\kappa'\right].
 \end{aligned}
\end{equation}
Since $d_\kappa'=-\mu_\kappa'-y\mu_\kappa''$, all the terms in
brackets are bounded by $Cy$ by \eqref{eq:4-added-030}. The potential \eqref{eq:3-added-019} also
satisfies $|V_\kappa|\le Cy$, and
$\Lambda_\kappa\le Cy^3$. Hence $|\Lambda_\kappa C_\kappa|\le Cy^4$,
with a constant independent of sufficiently small $\kappa$.
In particular the bulk coefficient is bounded below by $-Cy^4$.
At the outer boundary,
\begin{equation}\label{eq:4-added-034}
 \Lambda_\kappa(R_\kappa)P'(\rho_0)
 d_\kappa(R_\kappa)
 =3P'(\rho_0)R_\kappa^3+O(R_\kappa^5).
\end{equation}
Combining \eqref{eq:4-added-032}--\eqref{eq:4-added-034} gives
\begin{equation}\label{eq:4-7}
  \langle L_\kappa\chi,\chi\rangle_{\Lambda_\kappa}
 \ge c\int_0^{R_\kappa}y^4(\chi')^2\dd y
 +cR_\kappa^3\chi(R_\kappa)^2
 -C\int_0^{R_\kappa}y^4\chi^2\dd y
 -CR_\kappa^5\chi(R_\kappa)^2.
\end{equation}
Let $y=R_\kappa z$ and
$\widetilde\chi(z)=\chi(R_\kappa z)$.  The
Poincar\'e--Hardy inequality \cite{Lam2024} is
\begin{equation}\label{eq:4-8}
 \int_0^1z^4v(z)^2\dd z
 \le C\left(\int_0^1z^4v'(z)^2\dd z+v(1)^2\right),
 \qquad v\in C^1([0,1]).
\end{equation}
After scaling \eqref{eq:4-7},
\begin{equation}\label{eq:4-added-035}
\begin{aligned}
  \langle L_\kappa\chi,\chi\rangle_{\Lambda_\kappa}
 &\ge R_\kappa^3\left[
 c\int_0^1z^4(\widetilde\chi')^2\dd z
 +c\widetilde\chi(1)^2\right]-R_\kappa^5\left[
 C\int_0^1z^4\widetilde\chi^2\dd z
 +C\widetilde\chi(1)^2\right].
\end{aligned}
\end{equation}
By \eqref{eq:4-8}, the second bracket in \eqref{eq:4-added-035}
is controlled by the first.  Since
$R_\kappa^2\to0$, it is absorbed for sufficiently small $\kappa$.
Hence $ \langle L_\kappa\chi,\chi\rangle_{\Lambda_\kappa}\ge0$ for every regular radial $\chi$ satisfying
\eqref{eq:3-27}.
\end{proof}

\begin{proposition}\label{prop:large-redshift}
For sufficiently large central redshift $\kappa>0$, the generalized problem
\eqref{eq:3-26} has a negative eigenvalue.  The corresponding steady state is linearly unstable.
\end{proposition}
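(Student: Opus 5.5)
By Corollary~\ref{coro3}, it suffices to find, for each large $\kappa$ in the pressure parametrization of Remark~\ref{rem:pressure-parameter}, a test function $\chi_\kappa\in H^1_r(B_{R_\kappa}(\mathbb R^5))$ with $\langle L_\kappa\chi_\kappa,\chi_\kappa\rangle_{\Lambda_\kappa}<0$.

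I would take $\chi_\kappa$ supported in the annulus $[r_\kappa^1,r_\kappa^2]$ of Proposition~\ref{prop:inner-annulus}. The natural choice is a scale-invariant profile adapted to the power-law structure there:
\[
\chi_\kappa(y)=y^{-s}\,\phi\!\left(\frac{\log y-\log r_\kappa^1}{\log r_\kappa^2-\log r_\kappa^1}\right),
\]
with $\phi\in C_c^\infty(0,1)$ and an exponent $s$ to be chosen. Since $\chi_\kappa$ vanishes near $R_\kappa$, it lies in the trial space and no boundary term from \eqref{eq:3-27} enters. I would use the first form of \eqref{eq:4-1}.

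On the annulus, Proposition~\ref{prop:inner-annulus} gives the following asymptotics, each up to errors $O(\kappa^{-\delta})$:
\begin{itemize}
\item $y\mu_\kappa'\approx\beta_*$, hence $d_\kappa\approx 3-\beta_*$;
\item $P'(\rho_\kappa)\approx c_*^2$;
\item $e^{2\lambda_\kappa}\approx D_*/(1+c_*^2)^2$, with $y\lambda_\kappa'$ small;
\item $m_\kappa\approx U_*y$ and $4\pi y^2p_\kappa\approx 4\pi A_*$, hence $yV_\kappa\approx \bar V$ for an explicit constant $\bar V$ built from $A_*,U_*,\beta_*$ and $e^{2\lambda}$.
\end{itemize}
For the weight, \eqref{eq:4-added-003} together with $\mu'/P'\approx \beta_*/(c_*^2 y)$ gives $\Lambda_\kappa\approx K_\kappa\, y^{3-\beta_*/c_*^2}$. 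The constant $K_\kappa$ is common to all terms and drops out of the sign; the relative error is controlled by $\exp(C\kappa^{-\delta}\log\kappa)$, exactly as in \eqref{eq:2-22}.

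Substituting these into \eqref{eq:4-1} and changing variables to $t=\log y$, each integrand becomes $e^{(3-\beta_*/c_*^2-2s)t}$ times a quadratic expression in $\phi,\phi'$. The $\phi'$ contributions carry a factor $1/\log(r^2_\kappa/r^1_\kappa)=1/((\alpha_2-\alpha_1)\log\kappa)\to0$. I would pick $s$ so that the exponential factor is exactly $1$, i.e.\ $2s=3-\beta_*/c_*^2$; this makes the Hardy-type weight scale-invariant. The quadratic form then reduces to
\[
\Big[\,c_*^2\,\bar e\,(d-s)^2+\bar e\,(\bar V-d\beta_*)\Big]\int\phi^2\,dt+o(1)\int\phi^2\,dt ,
\]
where $\bar e$ collects the $e^{\lambda}$ factors and $d=3-\beta_*$.

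The main obstacle is verifying that this bracket is strictly negative for every $c_*^2\in(0,1]$. This is an explicit but messy algebraic inequality in the single variable $c_*^2$, using $D_*=1+6c_*^2+(c_*^2)^2$. It mirrors the computation in \cite{HadzicLinRein2021}, where instability ultimately comes from the spiraling of the dynamical system near $Z_*$, encoded in the complex eigenvalues \eqref{eq:2-added-039}. If the bracket with the optimal $s$ is not obviously negative, I would first optimize over $s$ (the minimal value of $(d-s)^2$ is governed by the discriminant). Failing that, I would replace the pure power by an oscillating profile $y^{-s}\cos(\omega\log y)$, whose negativity condition is equivalent to $7+42c_*^2-(c_*^2)^2>0$, the same condition that makes the eigenvalues in \eqref{eq:2-added-039} complex; that inequality holds on $(0,1]$.

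Once the bracket is negative, the remaining work is bookkeeping. The errors are $O(\kappa^{-\delta}\log\kappa)$ from the coefficients and $O(1/\log\kappa)$ from the $\phi'$ terms, and both vanish as $\kappa\to\infty$. This gives $\langle L_\kappa\chi_\kappa,\chi_\kappa\rangle_{\Lambda_\kappa}<0$ for large $\kappa$. Lemma~\ref{lem:lowest-eigenvalue} then yields a negative eigenvalue, and Corollary~\ref{coro3} yields the growing mode.
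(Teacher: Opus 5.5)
Your proposal is correct and is essentially the paper's proof: since $3-\beta_*/c_*^2=2\eta_*$, your exponent $s=\eta_*$ is the paper's $b=-\eta_*$, chosen so that $\Lambda_\kappa y^{2b-1}\simeq\mathfrak C_\kappa/y$, and the paper likewise lets the $\log\kappa$ length of $[r_\kappa^1,r_\kappa^2]$ dominate. It differs from you only in using a cutoff with dyadic transition layers (cost $O(\mathfrak C_\kappa)$ against a bulk term $\le -c_1\mathfrak C_\kappa\log\kappa$) instead of a slowly varying log-scale profile, and in working with the expanded second form of \eqref{eq:4-1}.

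The bracket you flag as the main obstacle is exactly the paper's $q_*$. With $d=(3+c_*^2)/(1+c_*^2)$, $s=\eta_*$ and $\bar V=-2c_*^2(1+3c_*^2)/(1+c_*^2)^2$ one gets $c_*^2(d-s)^2+\bar V-d\beta_*=-c_*^2[7+42c_*^2-(c_*^2)^2]/(4(1+c_*^2)^2)<0$. So you need neither to re-optimize $s$ nor the oscillating profile, which in any case only adds a nonnegative multiple of $c_*^2\omega^2$.
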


\begin{proof}
We use a localized power test function. Here $\kappa$ has the pressure parametrization
of Remark~\ref{rem:pressure-parameter}. Use the full annulus
\begin{equation}\label{eq:4-added-036}
 [r_\kappa^1,r_\kappa^2]=[\kappa^{\alpha_1}\ee^{-2\kappa},\kappa^{\alpha_2}\ee^{-2\kappa}],
 \qquad 0<\alpha_1<\alpha_2<\tfrac14.
\end{equation}
The annular estimates \eqref{eq:2-21} give uniformly,
\begin{equation}\label{eq:4-added-037}
\begin{aligned}
 P'(\rho_\kappa) &=\csq+O(\kappa^{-\delta}),&
 y\mu_\kappa '&=\beta_*+O(\kappa^{-\delta}),&
 y\lambda_\kappa '&=O(\kappa^{-\delta}),\\
 y^2p_\kappa &=A_*+O(\kappa^{-\delta}),&
 m_\kappa /y&=U_*+O(\kappa^{-\delta}),&
 \ee^{2\lambda_\kappa}&=\frac{D_*}{(1+\csq)^2}+O(\kappa^{-\delta}).
\end{aligned}
\end{equation}
First compute the limiting coefficients in the exact quadratic form.
The definitions \eqref{eq:3-added-016} and \eqref{eq:4-added-008},
together with \eqref{eq:4-added-037} give
\begin{equation}\label{eq:4-added-038}
 d_\kappa =\frac{3+\csq}{1+\csq}+O(\kappa^{-\delta}),\qquad
 yh_\kappa =1+O(\kappa^{-\delta}).
\end{equation}
For the potential \eqref{eq:3-added-019}, substitution of
\eqref{eq:4-added-037} gives
\begin{equation}\label{eq:4-added-039}
\begin{aligned}
 yV_\kappa &=8\pi y^2p_\kappa \ee^{2\lambda_\kappa}
 -\ee^{2\lambda_\kappa}(2y\mu_\kappa '+1)(m_\kappa /y+4\pi y^2p_\kappa )\\
 &=\frac{4(\csq)^2}{(1+\csq)^2}
 -\frac{2\csq(1+5\csq)}{(1+\csq)^2}+O(\kappa^{-\delta})\\
 &=-\frac{2\csq(1+3\csq)}{(1+\csq)^2}+O(\kappa^{-\delta}).
\end{aligned}
\end{equation}
Next, combining \eqref{eq:4-added-004} and \eqref{eq:4-added-037}
shows that the integrating factor obeys
\begin{equation}\label{eq:4-added-040}
 y\frac{\dd}{\dd y}\log \Lambda_\kappa 
 =3+y\lambda_\kappa '-\frac{y\mu_\kappa '}{P'(\rho_\kappa)}
 =2\eta_*+O(\kappa^{-\delta}),\qquad
 \eta_*=\frac{1+3\csq}{2(1+\csq)},
\end{equation}
\begin{equation}\label{eq:4-added-041}
\frac{\dd}{\dd y}\log \Lambda_\kappa = \frac{2\eta_*}{y} + \frac{1}{y}O(\kappa^{-\delta}).
\end{equation}
Integrating \eqref{eq:4-added-041} from $r_\kappa^1$ and setting
$\mathfrak C_\kappa =\Lambda_\kappa (r_\kappa^1)/(r_\kappa^1)^{2\eta_*}>0$ yields
\begin{equation}\label{eq:4-added-042}
 \left|\log\frac{\Lambda_\kappa(y)}{\mathfrak C_\kappa y^{2\eta_*}}\right|
\le C\kappa^{-\delta}\log\frac{y}{r_\kappa^1}
\le C\kappa^{-\delta}\log\kappa.
\end{equation}
Equivalently, on the whole annulus,
\begin{equation}\label{eq:4-added-043}
 \mathfrak C_\kappa \ee^{-C\kappa^{-\delta}\log \kappa}
 \le \Lambda_\kappa (y)y^{-2\eta_*}
 \le\mathfrak C_\kappa \ee^{C\kappa^{-\delta}\log \kappa}.
\end{equation}
Thus $\Lambda_\kappa (y)=\mathfrak C_\kappa y^{2\eta_*}
[1+O(\kappa^{-\delta}\log \kappa )]$, with $\mathfrak C_\kappa$ independent of $y$. Choose a cutoff function with the following transition regions,
$0\le\xi_\kappa \le1$, supported in $[r_\kappa^1,r_\kappa^2]$, identically equal to \(1\) on
$[2r_\kappa^1,r_\kappa^2/2]$, with
\begin{equation}\label{eq:4-added-044}
 |\xi_\kappa '|\le4/r_\kappa^1\ \hbox{on }[r_\kappa^1,2r_\kappa^1],
 \qquad
 |\xi_\kappa '|\le4/r_\kappa^2\ \hbox{on }[r_\kappa^2/2,r_\kappa^2].
\end{equation}
Let $\chi_\kappa =y^b\xi_\kappa$, where the
exponent $b$ will be selected below. Here we use the second form in \eqref{eq:4-1}, and direct substitution gives the exact
decomposition $\langle L_\kappa\chi_\kappa ,\chi_\kappa \rangle_{\Lambda_\kappa}=B_1+B_2$, where
\begin{equation}\label{eq:4-added-045}
\begin{aligned}
 B_1={}&\int_{r_\kappa^1}^{r_\kappa^2}\Lambda_\kappa y^{2b-1}
 \{P'(\rho_\kappa) [b^2+2bd_\kappa +d_\kappa (yh_\kappa )]+yV_\kappa \}\xi_\kappa^2\dd y,\\
 B_2={}&\left(\int_{r_\kappa^1}^{2r_\kappa^1}+
              \int_{r_\kappa^2/2}^{r_\kappa^2}\right)\Lambda_\kappa P'(\rho_\kappa) 
 \left[y^{2b+1}(\xi_\kappa ')^2+
 2(b+d_\kappa )y^{2b}\xi_\kappa \xi_\kappa '\right]\dd y.
\end{aligned}
\end{equation}
Every derivative of the cutoff is retained in $B_2$ and $B_1$ is the
part without cutoff derivatives. In \eqref{eq:4-added-045}, choose the power so that $\Lambda_\kappa y^{2b-1}$ is
proportional to $\frac{1}{y}$. Since $\Lambda_\kappa \simeq\mathfrak C_\kappa y^{2\eta_*}$,
this requires
\begin{equation}\label{eq:4-added-046}
 b=-\eta_*=-\frac{1+3\csq}{2(1+\csq)}.
\end{equation}
Substituting \eqref{eq:4-added-037}--\eqref{eq:4-added-039} and
\eqref{eq:4-added-046} into the braces in $B_1$ in
\eqref{eq:4-added-045} gives the limit
\begin{equation}\label{eq:4-added-047}
 q_*=\csq\left[\eta_*^2-2\frac{3+\csq}{1+\csq}\eta_*
 +\frac{3+\csq}{1+\csq}\right]
 -\frac{2\csq(1+3\csq)}{(1+\csq)^2}.
\end{equation}
To verify the sign explicitly, we have
\begin{equation}\label{eq:4-added-048}
\begin{aligned}
 \frac{4(1+\csq)^2}{\csq}q_*
 &=(1+3\csq)^2-4(3+\csq)(1+3\csq)\\
 &\quad+4(3+\csq)(1+\csq)-8(1+3\csq)\\
 &=-[7+42\csq-(\csq)^2].
\end{aligned}
\end{equation}
Therefore
\begin{equation}\label{eq:4-added-049}
 q_*=-\frac{\csq[7+42\csq-(\csq)^2]}{4(1+\csq)^2}<0.
\end{equation}
The coefficient convergence and \eqref{eq:4-added-049} show that the braces
in \eqref{eq:4-added-045} are uniformly bounded above by a negative constant
for large $\kappa$. Using \eqref{eq:4-added-043} with $b=-\eta_*$, we obtain
\begin{equation}\label{eq:4-added-050}
\begin{aligned}
 B_1&\le-c\mathfrak C_\kappa 
 \int_{2r_\kappa^1}^{r_\kappa^2/2}\frac{\dd y}{y}\\
 &=-c\mathfrak C_\kappa [(\alpha_2-\alpha_1)\log \kappa -\log4]
 \le-c_1\mathfrak C_\kappa \log \kappa  .
\end{aligned}
\end{equation}
Since $P'(\rho_\kappa)$ and
$b+d_\kappa$ are bounded and $\Lambda_\kappa y^{2b}\simeq\mathfrak C_\kappa$,
\begin{equation}\label{eq:4-added-051}
 |B_2|\le C\mathfrak C_\kappa 
 \left(\int_{r_\kappa^1}^{2r_\kappa^1}+\int_{r_\kappa^2/2}^{r_\kappa^2}\right)
 [y|\xi_\kappa '|^2+|\xi_\kappa '|]\dd y .
\end{equation}
On the inner transition, the cutoff bound \eqref{eq:4-added-044} gives
\begin{equation}\label{eq:4-added-052}
 \int_{r_\kappa^1}^{2r_\kappa^1}y|\xi_\kappa '|^2\dd y
 \le\frac{16}{(r_\kappa^1)^2}\frac{3(r_\kappa^1)^2}{2}=24,\qquad
 \int_{r_\kappa^1}^{2r_\kappa^1}|\xi_\kappa '|\dd y\le4.
\end{equation}
The same cutoff bound gives, on the outer transition,
\begin{equation}\label{eq:4-added-053}
 \int_{r_\kappa^2/2}^{r_\kappa^2}y|\xi_\kappa '|^2\dd y
 \le\frac{16}{(r_\kappa^2)^2}\frac{3(r_\kappa^2)^2}{8}=6,\qquad
 \int_{r_\kappa^2/2}^{r_\kappa^2}|\xi_\kappa '|\dd y\le2.
\end{equation}
Substituting \eqref{eq:4-added-052}--\eqref{eq:4-added-053} into
\eqref{eq:4-added-051} gives $|B_2|\le C_2\mathfrak C_\kappa$. Combining
this with \eqref{eq:4-added-045} and \eqref{eq:4-added-050} yields
\begin{equation}\label{eq:4-added-054}
 \langle L_\kappa\chi_\kappa ,\chi_\kappa \rangle_{\Lambda_\kappa}\le
 -c_1\mathfrak C_\kappa \log \kappa +C_2\mathfrak C_\kappa <0
\end{equation}
for sufficiently large $\kappa$, which completes the proof.
\end{proof}

\section{Turning point principle}\label{sec:turning}
We use the central enthalpy parameter $\kappa$ throughout this section, except when the auxiliary pressure parameter is explicitly indicated. The curve of interest is
\begin{equation}
 \mathcal C:\kappa\longmapsto(R_\kappa,M_\kappa).
\end{equation}
The purpose of the marginal displacement is to turn a parameter derivative of this curve into a regular interior zero solution of the linearized equation. The idea comes from \cite{HadzicLin2021,Lamturn}.

\subsection{The marginal displacement and the boundary defect}
The number density normalization in \eqref{eq:3-2} depends on the background. To compare different equilibria, use instead the fixed function $N$ in \eqref{eq:3-added-001} and define
\begin{equation}\label{eq:particle-number}
 \overline n_\kappa(y)=N(\rho_\kappa(y)),\qquad
 \mathcal N_\kappa=4\pi\int_0^{R_\kappa}e^{\lambda_\kappa}\overline n_\kappa y^2\dd y.
\end{equation}
The static relations imply
\begin{equation}\label{eq:chemical}
 \frac{\rho_\kappa+p_\kappa}{\overline n_\kappa}
 =\rho_0e^{Q(\rho_\kappa)},\qquad
 e^{\mu_\kappa}\frac{\rho_\kappa+p_\kappa}{\overline n_\kappa}
 =\rho_0e^{\mu_\kappa(R_\kappa)}.
\end{equation}

\begin{lemma}\label{lemma:firstlaw}
For the equilibrium family, the following identity holds
\begin{equation}\label{eq:firstlaw}
 \partial_\kappa M_\kappa
 =\rho_0\sqrt{1-2M_\kappa/R_\kappa}\,\partial_\kappa\mathcal N_\kappa.
\end{equation}
Thus total mass and total particle number have exactly the same critical parameters.
\end{lemma}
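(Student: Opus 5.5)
The plan is to differentiate both $M_\kappa=m_\kappa(R_\kappa)$ and $\mathcal N_\kappa$ directly in $\kappa$. The key observation is that the interior variation of $\mathcal N_\kappa$, once weighted by the redshift factor, is an exact derivative. Throughout, $\delta$ denotes $\partial_\kappa$ at fixed $y$ on the fluid side, and surface values are understood as interior limits, as in Lemma~\ref{lemma2.10}. That lemma gives $R_\kappa,M_\kappa\in C^2$ and continuity of the interior parameter derivatives up to $y=R_\kappa$. This justifies differentiating an integral over the moving interval $[0,R_\kappa]$.

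\emph{Step 1: differentiate the two totals.} For the mass, since $m_\kappa'(R_\kappa^-)=4\pi R_\kappa^2\rho_0$, we get
\begin{equation*}
\partial_\kappa M_\kappa=\delta m_\kappa(R_\kappa)+4\pi\rho_0R_\kappa^2\,\partial_\kappa R_\kappa .
\end{equation*}
For the particle number, the boundary term is $4\pi e^{\lambda_\kappa(R_\kappa)}\overline n_\kappa(R_\kappa)R_\kappa^2\partial_\kappa R_\kappa$, and $\overline n_\kappa(R_\kappa)=N(\rho_0)=1$. For the interior integrand we use $\delta\overline n_\kappa=N'(\rho_\kappa)\delta\rho_\kappa=\overline n_\kappa\delta\rho_\kappa/(\rho_\kappa+p_\kappa)$ from \eqref{eq:3-1}. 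From \eqref{eq:2-3} we have $e^{-2\lambda_\kappa}=1-2m_\kappa/y$, which gives $\delta\lambda_\kappa=e^{2\lambda_\kappa}\delta m_\kappa/y$. Together these give
\begin{equation*}
\partial_\kappa\mathcal N_\kappa=4\pi\int_0^{R_\kappa}e^{\lambda_\kappa}\frac{\overline n_\kappa}{\rho_\kappa+p_\kappa}\Bigl(y^2\delta\rho_\kappa+(\rho_\kappa+p_\kappa)e^{2\lambda_\kappa}y\,\delta m_\kappa\Bigr)\dd y+4\pi e^{\lambda_\kappa(R_\kappa)}R_\kappa^2\partial_\kappa R_\kappa .
\end{equation*}

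\emph{Step 2: multiply by $\rho_0e^{\mu_\kappa(R_\kappa)}$ and recognize a total derivative.} By \eqref{eq:chemical},
\begin{equation*}
\rho_0e^{\mu_\kappa(R_\kappa)}\,\frac{\overline n_\kappa}{\rho_\kappa+p_\kappa}=e^{\mu_\kappa}.
\end{equation*}
So after multiplication the interior integrand becomes
\begin{equation*}
e^{\lambda_\kappa+\mu_\kappa}\bigl(4\pi y^2\delta\rho_\kappa\bigr)+e^{\lambda_\kappa+\mu_\kappa}\,4\pi y e^{2\lambda_\kappa}(\rho_\kappa+p_\kappa)\,\delta m_\kappa .
\end{equation*}
Differentiating \eqref{eq:2-4} at fixed $y$ gives $4\pi y^2\delta\rho_\kappa=(\delta m_\kappa)'$. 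The TOV identity \eqref{eq:2-8} gives $4\pi ye^{2\lambda_\kappa}(\rho_\kappa+p_\kappa)=\lambda_\kappa'+\mu_\kappa'$. Hence the integrand equals $(e^{\lambda_\kappa+\mu_\kappa}\delta m_\kappa)'$. Since $\delta m_\kappa(0)=0$ by regularity at the center, the integral contributes $e^{\lambda_\kappa(R_\kappa)+\mu_\kappa(R_\kappa)}\delta m_\kappa(R_\kappa)$.

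\emph{Step 3: use the surface normalization.} By \eqref{eq:2-3} at $R_\kappa$ and the normalization \eqref{eq:2-added-053} (with $\bar y_\kappa(R_\kappa)=0$), we have
\begin{equation*}
e^{\lambda_\kappa(R_\kappa)}=\bigl(1-2M_\kappa/R_\kappa\bigr)^{-1/2},\qquad e^{\mu_\kappa(R_\kappa)}=\bigl(1-2M_\kappa/R_\kappa\bigr)^{1/2},
\end{equation*}
so $e^{\lambda_\kappa+\mu_\kappa}=1$ at the surface. Therefore
\begin{equation*}
\rho_0e^{\mu_\kappa(R_\kappa)}\partial_\kappa\mathcal N_\kappa=\delta m_\kappa(R_\kappa)+4\pi\rho_0R_\kappa^2\partial_\kappa R_\kappa=\partial_\kappa M_\kappa ,
\end{equation*}
which is \eqref{eq:firstlaw}. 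The factor $\rho_0\sqrt{1-2M_\kappa/R_\kappa}$ is positive, so the critical-point statement follows at once.

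The only delicate point is Step 1. The differentiation of integrals with a moving endpoint, and the use of one-sided surface values of $\delta\rho_\kappa$ and $\delta m_\kappa$ where $\rho$ jumps, must be justified. I would handle this by pulling back to $x=y/R_\kappa\in[0,1]$, using the joint continuity from Lemma~\ref{lemma2.10}, and differentiating there before returning to $y$.
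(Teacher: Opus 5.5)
Your proof is correct and is essentially the paper's argument. The paper also differentiates $\mathcal N_\kappa$ over the moving interval and rewrites $\partial_\kappa\bar n_\kappa$ and $\partial_\kappa\lambda_\kappa$ in terms of $\partial_\kappa m_\kappa$. It then integrates by parts using $\bigl(e^{\lambda_\kappa}\bar n_\kappa/(\rho_\kappa+p_\kappa)\bigr)'=4\pi y e^{3\lambda_\kappa}\bar n_\kappa$, which is your total-derivative observation before you use \eqref{eq:chemical} to turn the weight into $e^{\lambda_\kappa+\mu_\kappa}/(\rho_0e^{\mu_\kappa(R_\kappa)})$. The only cosmetic difference is that the paper keeps the surface factor $e^{\lambda_\kappa(R_\kappa)}/\rho_0$ and so never needs the normalization $e^{\lambda_\kappa(R_\kappa)+\mu_\kappa(R_\kappa)}=1$.
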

\begin{proof}
All parameter derivatives of the profiles below are taken at fixed $y$,
and surface values are taken from the fluid side. By \eqref{eq:2-3}, \eqref{eq:2-4}, and \eqref{eq:3-1}, we have
\begin{align}
\partial_\kappa\bar n_\kappa
&=N'(\rho_\kappa)\partial_\kappa\rho_\kappa
=\frac{\bar n_\kappa}{\rho_\kappa+p_\kappa}
  \partial_\kappa\rho_\kappa
=\frac{\bar n_\kappa}
 {4\pi y^2(\rho_\kappa+p_\kappa)}
 \partial_\kappa m_\kappa'\label{eq:firstlaw-number-variation},\\
\partial_\kappa\lambda_\kappa
&=-\frac12\partial_\kappa
 \log\!\left(1-\frac{2m_\kappa}{y}\right)
=\frac{e^{2\lambda_\kappa}}{y}\partial_\kappa m_\kappa\label{eq:firstlaw-metric-variation}.
\end{align}
The number density relation \eqref{eq:3-1} and the static equations \eqref{eq:1-20}, \eqref{eq:1-21}, and \eqref{eq:1-23} also give
\begin{equation}\label{eq:firstlaw-weight-derivative}
\begin{aligned}
\left(\frac{e^{\lambda_\kappa}\bar n_\kappa}
{\rho_\kappa+p_\kappa}\right)'
&=\frac{e^{\lambda_\kappa}\bar n_\kappa}
{\rho_\kappa+p_\kappa}
\left(
\lambda_\kappa'
+\frac{\bar n_\kappa'}{\bar n_\kappa}
-\frac{\rho_\kappa'+p_\kappa'}{\rho_\kappa+p_\kappa}
\right)\\
&=\frac{e^{\lambda_\kappa}\bar n_\kappa}
{\rho_\kappa+p_\kappa}
\left(
\lambda_\kappa'
+\frac{\rho_\kappa'}{\rho_\kappa+p_\kappa}
-\frac{\rho_\kappa'+p_\kappa'}{\rho_\kappa+p_\kappa}
\right)\\
&=\frac{e^{\lambda_\kappa}\bar n_\kappa}
{\rho_\kappa+p_\kappa}
\left(\lambda_\kappa'
-\frac{p_\kappa'}{\rho_\kappa+p_\kappa}\right)\\
&=\frac{e^{\lambda_\kappa}\bar n_\kappa}
{\rho_\kappa+p_\kappa}
(\lambda_\kappa'+\mu_\kappa')
=4\pi y e^{3\lambda_\kappa}\bar n_\kappa.
\end{aligned}
\end{equation}
Using
\begin{equation}\label{eq:5-added-001}
\partial_\kappa m_\kappa(0)=0,\qquad
\bar n_\kappa(R_\kappa)=1,\qquad
\rho_\kappa(R_\kappa)=\rho_0,\qquad
p_\kappa(R_\kappa)=0,
\end{equation}
differentiation of \eqref{eq:particle-number}, substitution of \eqref{eq:firstlaw-number-variation}--\eqref{eq:firstlaw-metric-variation}, and integration by parts using \eqref{eq:firstlaw-weight-derivative} give
\begin{equation}\label{eq:firstlaw-particle-variation}
\begin{aligned}
\partial_\kappa\mathcal N_\kappa
=&4\pi\int_0^{R_\kappa}
e^{\lambda_\kappa}
\left(\partial_\kappa\bar n_\kappa
+\bar n_\kappa\partial_\kappa\lambda_\kappa\right)y^2\,dy+4\pi R_\kappa^2e^{\lambda_\kappa(R_\kappa)}
\partial_\kappa R_\kappa\\
=&\int_0^{R_\kappa}
\left(
\frac{e^{\lambda_\kappa}\bar n_\kappa}
{\rho_\kappa+p_\kappa}\partial_\kappa m_\kappa'
+4\pi y e^{3\lambda_\kappa}\bar n_\kappa
\partial_\kappa m_\kappa
\right)\,dy+4\pi R_\kappa^2e^{\lambda_\kappa(R_\kappa)}
\partial_\kappa R_\kappa\\
=&\left[
\frac{e^{\lambda_\kappa}\bar n_\kappa}
{\rho_\kappa+p_\kappa}\partial_\kappa m_\kappa
\right]_0^{R_\kappa}+\int_0^{R_\kappa}
\left[
4\pi y e^{3\lambda_\kappa}\bar n_\kappa
-\left(
\frac{e^{\lambda_\kappa}\bar n_\kappa}
{\rho_\kappa+p_\kappa}
\right)'
\right]\partial_\kappa m_\kappa\,dy+4\pi R_\kappa^2e^{\lambda_\kappa(R_\kappa)}
\partial_\kappa R_\kappa\\
={}&\frac{e^{\lambda_\kappa(R_\kappa)}}{\rho_0}
\left.\partial_\kappa m_\kappa(y)\right|_{y=R_\kappa}
+4\pi R_\kappa^2e^{\lambda_\kappa(R_\kappa)}
\partial_\kappa R_\kappa.
\end{aligned}
\end{equation}
On the other hand,
\begin{equation}\label{eq:5-added-002}
\begin{aligned}
\partial_\kappa M_\kappa
=\partial_\kappa\!\left[m_\kappa(R_\kappa)\right]&=\left.\partial_\kappa m_\kappa(y)\right|_{y=R_\kappa}
+m_\kappa'(R_\kappa)\partial_\kappa R_\kappa\\
&=\left.\partial_\kappa m_\kappa(y)\right|_{y=R_\kappa}
+4\pi R_\kappa^2\rho_0\,\partial_\kappa R_\kappa.
\end{aligned}
\end{equation}
Combining \eqref{eq:firstlaw-particle-variation} with \eqref{eq:5-added-002}, we obtain
\begin{equation}\label{eq:5-added-003}
\partial_\kappa\mathcal N_\kappa
=\frac{e^{\lambda_\kappa(R_\kappa)}}{\rho_0}
\partial_\kappa M_\kappa,
\end{equation}
and the surface value of \eqref{eq:2-3} then gives \eqref{eq:firstlaw}
\begin{align}
\partial_\kappa M_\kappa
=\rho_0e^{-\lambda_\kappa(R_\kappa)}
 \partial_\kappa\mathcal N_\kappa
=\rho_0\sqrt{1-\frac{2M_\kappa}{R_\kappa}}\,
 \partial_\kappa\mathcal N_\kappa.
\end{align}
\end{proof}

Lemma \ref{lemma:firstlaw} identifies the critical parameters of the total mass with those of the total particle number. To connect this identity with the linearized problem, we use the marginal displacement \(v_\kappa\) defined in \eqref{eq:marginalintro}, obtained from the parameter derivative of the enclosed mass. The next lemma establishes its regularity and the interior equation \(L_\kappa v_\kappa=0\).

\begin{lemma}\label{lem:marginal}
The function $v_\kappa$ in \eqref{eq:marginalintro} belongs to $H^1$, satisfies $L_\kappa v_\kappa=0$ for $0<y<R_\kappa$, and has
\begin{equation}\label{eq:vcenter}
 v_\kappa(0)=-\frac1{3P'(\rho_\kappa(0))}\ne0.
\end{equation}
\end{lemma}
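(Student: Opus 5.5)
Since $v_\kappa$ is a parameter derivative of the steady-state family, the natural route is to show that it is the linearized Lagrangian displacement generated by moving along that family. Concretely, set $\zeta=v_\kappa$ (time independent). By \eqref{eq:3-20}, or equivalently \eqref{eq:3-added-012}, the associated density perturbation is $\rho=-(\rho_\kappa+p_\kappa)\mathcal D_\kappa v_\kappa$. We claim this equals $\partial_\kappa\rho_\kappa$, the Eulerian derivative at fixed $y$. To check this, write $\Phi:=\partial_\kappa m_\kappa$, so that $v_\kappa=-\Phi/(4\pi y^3(\rho_\kappa+p_\kappa))$. Then
\[
\mathcal D_\kappa v_\kappa=-\frac{1}{4\pi y^2}\,e^{\mu_\kappa}\bigl(y^3 e^{-\mu_\kappa}v_\kappa\bigr)'\cdot\frac{4\pi}{y}\cdots,
\]
and the cleaner way to see it is through \eqref{eq:3-20} with $n_\kappa=e^{\mu_\kappa}(\rho_\kappa+p_\kappa)$. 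This gives $\rho=-\frac{n_\kappa}{y^2}\bigl(e^{-\mu_\kappa}y^3v_\kappa\bigr)'$. Using $e^{-\mu_\kappa}y^3 v_\kappa=-\Phi/(4\pi n_\kappa)$ and the normalization $n_\kappa'=-\mu_\kappa' n_\kappa/\Psi_\kappa\cdots$, one should arrive at $\rho=\Phi'/(4\pi y^2)$ plus a term proportional to $\Phi$. The extra term must then be matched against the $\kappa$-dependence of the normalization of $N_\kappa$, which is handled through the additive constant in $\mu_\kappa$.

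Rather than fight that bookkeeping, I would verify the interior equation directly. Differentiate the static system \eqref{eq:2-3}, \eqref{eq:2-5} and \eqref{eq:1-23} in $\kappa$ at fixed $y$. This yields linear equations for $\partial_\kappa m_\kappa$, $\partial_\kappa\rho_\kappa$, $\partial_\kappa p_\kappa=P'\partial_\kappa\rho_\kappa$, $\partial_\kappa\lambda_\kappa=e^{2\lambda_\kappa}\Phi/y$ (as in \eqref{eq:firstlaw-metric-variation}) and $\partial_\kappa\mu_\kappa'$, together with $\Phi'=4\pi y^2\partial_\kappa\rho_\kappa$. The key identifications are $\partial_\kappa\rho_\kappa=-(\rho_\kappa+p_\kappa)\mathcal D_\kappa v_\kappa$, which is a pure computation from $\Phi'=4\pi y^2\partial_\kappa\rho_\kappa$ and the definition of $v_\kappa$ using $(\rho_\kappa+p_\kappa)'=-(1+1/P')(\rho_\kappa+p_\kappa)\mu_\kappa'$, and $\partial_\kappa\lambda_\kappa=-y\mu_\kappa' v_\kappa\cdot(\ldots)$, matching \eqref{eq:3-added-022}. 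With these in hand, the differentiated Euler equation $\partial_\kappa p_\kappa'+(\partial_\kappa\rho_\kappa+\partial_\kappa p_\kappa)\mu_\kappa'+(\rho_\kappa+p_\kappa)\partial_\kappa\mu_\kappa'=0$ is exactly the time-independent momentum equation \eqref{eq:3-added-009} with $\dot u=0$, and \eqref{eq:3-15} supplies $\mu'$. Retracing the derivation of Proposition~\ref{prop:radial-operator}, which used \eqref{eq:3-14} and \eqref{eq:3-24} to eliminate $\lambda$, then gives $L_\kappa v_\kappa=0$ on $(0,R_\kappa)$. The main obstacle is the consistency check that the metric perturbation $\lambda$ produced by \eqref{eq:3-14} from $\zeta=v_\kappa$ coincides with $\partial_\kappa\lambda_\kappa$. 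Through \eqref{eq:3-24} this reduces to showing $-m_\kappa v_\kappa-4\pi p_\kappa y^3 v_\kappa=\ldots$. I would instead verify it by comparing both sides after one $y$-derivative, using that both vanish at $y=0$.

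Regularity and the central value come from the central expansion in Lemma~\ref{lemma2.10}. Near $y=0$ we have $\rho_\kappa(y)=\rho_\kappa(0)+O(y^2)$, so $\Phi=\frac{4\pi}{3}y^3\partial_\kappa\rho_\kappa(0)+O(y^5)$. Since $\kappa=Q(\rho_\kappa(0))$, we get $\partial_\kappa\rho_\kappa(0)=1/Q'(\rho_\kappa(0))=(\rho_\kappa(0)+p_\kappa(0))/P'(\rho_\kappa(0))$. Hence $v_\kappa(0)=-\partial_\kappa\rho_\kappa(0)/(3(\rho_\kappa(0)+p_\kappa(0)))=-1/(3P'(\rho_\kappa(0)))$, which is \eqref{eq:vcenter}. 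Smoothness of $v_\kappa$ on $[0,R_\kappa]$, up to the boundary from inside, follows from the $C^2$ parameter dependence in Lemma~\ref{lemma2.10} and from $\rho_\kappa+p_\kappa\ge\rho_0>0$. This gives $v_\kappa\in C^1([0,R_\kappa])\subset H^1$.
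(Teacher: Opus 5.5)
You have a genuine gap: the derivation of $L_\kappa v_\kappa=0$ rests on identifications that are false.

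\textbf{What goes wrong.} The ``pure computation'' from $\Phi'=4\pi y^2\partial_\kappa\rho_\kappa$ and $(\rho_\kappa+p_\kappa)'=-(1+1/P'(\rho_\kappa))(\rho_\kappa+p_\kappa)\mu_\kappa'$ actually gives
\[
\partial_\kappa\rho_\kappa=-(\rho_\kappa+p_\kappa)\mathcal D_\kappa v_\kappa-y\rho_\kappa'v_\kappa ,
\]
not $\partial_\kappa\rho_\kappa=-(\rho_\kappa+p_\kappa)\mathcal D_\kappa v_\kappa$. The same problem appears for the metric:
\begin{itemize}
\item $\partial_\kappa\lambda_\kappa=\ee^{2\lambda_\kappa}\Phi/y=-y(\lambda_\kappa'+\mu_\kappa')v_\kappa$.
\item But \eqref{eq:3-14} with \eqref{eq:3-24} produces $\lambda=-y\mu_\kappa'v_\kappa$ from $\zeta=v_\kappa$.
\end{itemize}
So the consistency check you single out as the main obstacle does not hold; it fails. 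Likewise, differentiating \eqref{eq:1-21} at fixed $y$ gives $y\,\partial_\kappa\mu_\kappa'=(2y\mu_\kappa'+1)\partial_\kappa\lambda_\kappa+4\pi y^2\ee^{2\lambda_\kappa}\partial_\kappa p_\kappa$. This lacks the $\zeta$-terms of \eqref{eq:3-15}.

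The source of the discrepancy is that the perturbations in Section 3 are Lagrangian, i.e.\ the fields pulled back by $\eta$. By contrast, $\partial_\kappa$ at fixed $y$ is an Eulerian variation. The ``extra term proportional to $\Phi$'' in your first paragraph is exactly the transport term $-y\rho_\kappa'v_\kappa$. It has nothing to do with the normalization of $N_\kappa$ or the additive constant in $\mu_\kappa$, since \eqref{eq:3-20} only involves $n_\kappa\ee^{-\mu_\kappa}=\rho_\kappa+p_\kappa$.

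\textbf{The missing idea.} Work with the Lagrangian variations $\partial_\kappa f_\kappa+yf_\kappa'v_\kappa$. For these, the identities you wanted do hold:
\begin{itemize}
\item $\partial_\kappa\rho_\kappa+y\rho_\kappa'v_\kappa=-(\rho_\kappa+p_\kappa)\mathcal D_\kappa v_\kappa$;
\item $\partial_\kappa p_\kappa+yp_\kappa'v_\kappa=P'(\rho_\kappa)(\partial_\kappa\rho_\kappa+y\rho_\kappa'v_\kappa)$;
\item $\partial_\kappa\lambda_\kappa+y\lambda_\kappa'v_\kappa=-y\mu_\kappa'v_\kappa$.
\end{itemize}
These agree with \eqref{eq:3-added-012} and \eqref{eq:3-added-022}.

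Differentiating $(\mu_\kappa+Q(\rho_\kappa))'=0$ in $\kappa$ then gives $[\partial_\kappa\mu_\kappa+y\mu_\kappa'v_\kappa-P'(\rho_\kappa)\mathcal D_\kappa v_\kappa]'=0$. Next compute $(\partial_\kappa\mu_\kappa+y\mu_\kappa'v_\kappa)'$ from the differentiated formula for $\mu_\kappa'$ and the expression for $\mu_\kappa''$. This yields $y\mu_\kappa'v_\kappa'+V_\kappa v_\kappa-(\lambda_\kappa'+\mu_\kappa')P'(\rho_\kappa)\mathcal D_\kappa v_\kappa$. Equating the two expressions is exactly $L_\kappa v_\kappa=0$.

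This is the paper's argument. Once the transport correction is inserted, your second approach reduces to it, and there is no need to retrace Proposition~\ref{prop:radial-operator}.

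\textbf{What is already fine.} Your central value $v_\kappa(0)=-1/(3P'(\rho_\kappa(0)))$ is correct and matches the paper. For $v_\kappa'=O(y)$ at the center, make explicit that the two $O(y^{-1})$ contributions to $v_\kappa'$ cancel. This follows from $\partial_\kappa\rho_\kappa(y)=\partial_\kappa\rho_\kappa(0)+O(y^2)$, as in the paper.
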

\begin{proof}
From \eqref{eq:marginalintro}, \eqref{eq:2-4}, and the static equations \eqref{eq:1-20}, \eqref{eq:1-21}, and \eqref{eq:1-23},
\begin{equation}\label{eq:5-added-004}
\begin{gathered}
\partial_\kappa m_\kappa
=-4\pi y^3(\rho_\kappa+p_\kappa)v_\kappa,
\qquad m_\kappa'=4\pi y^2\rho_\kappa,\\
p_\kappa'=-(\rho_\kappa+p_\kappa)\mu_\kappa',
\qquad
\lambda_\kappa'+\mu_\kappa'
=4\pi y e^{2\lambda_\kappa}(\rho_\kappa+p_\kappa),\qquad
\mu_\kappa'=e^{2\lambda_\kappa}
\left(\frac{m_\kappa}{y^2}+4\pi yp_\kappa\right).
\end{gathered}
\end{equation}
Hence
\begin{equation}\label{eq:5-added-005}
\begin{aligned}
\partial_\kappa\rho_\kappa
&=\frac{\partial_\kappa m_\kappa'}{4\pi y^2}
=-\frac{1}{y^2}
\left[y^3(\rho_\kappa+p_\kappa)v_\kappa\right]'\\
&=-(\rho_\kappa+p_\kappa)(yv_\kappa'+3v_\kappa)
-y(\rho_\kappa'+p_\kappa')v_\kappa,
\end{aligned}
\end{equation}
and the definition \eqref{eq:3-added-016} of $\mathcal D_\kappa$, together with the static Euler relation, gives
\begin{equation}\label{eq:5-added-006}
\begin{aligned}
\partial_\kappa\rho_\kappa+y\rho_\kappa'v_\kappa
&=-(\rho_\kappa+p_\kappa)(yv_\kappa'+3v_\kappa)
-yp_\kappa'v_\kappa=-(\rho_\kappa+p_\kappa)
\left[yv_\kappa'+(3-y\mu_\kappa')v_\kappa\right]=-(\rho_\kappa+p_\kappa)\mathcal D_\kappa v_\kappa,\\
\partial_\kappa p_\kappa+yp_\kappa'v_\kappa
&=P'(\rho_\kappa)
\left(\partial_\kappa\rho_\kappa+y\rho_\kappa'v_\kappa\right)=-P'(\rho_\kappa)(\rho_\kappa+p_\kappa)\mathcal D_\kappa v_\kappa.
\end{aligned}
\end{equation}
Also,
\begin{equation}\label{eq:5-added-007}
\begin{aligned}
\partial_\kappa\lambda_\kappa
=-\frac12\partial_\kappa
\log\!\left(1-\frac{2m_\kappa}{y}\right)
=\frac{e^{2\lambda_\kappa}}{y}\partial_\kappa m_\kappa=-4\pi y^2e^{2\lambda_\kappa}
(\rho_\kappa+p_\kappa)v_\kappa=-y(\lambda_\kappa'+\mu_\kappa')v_\kappa.
\end{aligned}
\end{equation}
Differentiating the static Euler equation \eqref{eq:1-23} and using \eqref{eq:5-added-006} gives
\begin{equation}\label{eq:5-added-008}
\begin{aligned}
0
&=\partial_\kappa\Biggl[
\mu_\kappa'+\frac{P'(\rho_\kappa)}{\rho_\kappa+p_\kappa}
\rho_\kappa'\Biggr]\\
&=(\partial_\kappa\mu_\kappa)'
+\frac{P'(\rho_\kappa)}{\rho_\kappa+p_\kappa}
(\partial_\kappa\rho_\kappa)'
+\Biggl[
\frac{P''(\rho_\kappa)}{\rho_\kappa+p_\kappa}
-\frac{P'(\rho_\kappa)(1+P'(\rho_\kappa))}
{(\rho_\kappa+p_\kappa)^2}
\Biggr]\rho_\kappa'\partial_\kappa\rho_\kappa\\
&=\Biggl[
\partial_\kappa\mu_\kappa
+\frac{P'(\rho_\kappa)}{\rho_\kappa+p_\kappa}
\partial_\kappa\rho_\kappa\Biggr]'
=\Bigl[
\partial_\kappa\mu_\kappa+y\mu_\kappa'v_\kappa
-P'(\rho_\kappa)\mathcal D_\kappa v_\kappa\Bigr]'.
\end{aligned}
\end{equation}
On the other hand,
\begin{equation}\label{eq:5-added-009}
\begin{aligned}
\mu_\kappa''
&=2\lambda_\kappa'\mu_\kappa'
+e^{2\lambda_\kappa}
\left(\frac{m_\kappa'}{y^2}-\frac{2m_\kappa}{y^3}
+4\pi p_\kappa+4\pi yp_\kappa'\right)\\
&=2\lambda_\kappa'\mu_\kappa'
+e^{2\lambda_\kappa}
\left[4\pi(\rho_\kappa+p_\kappa)
-\frac{2m_\kappa}{y^3}+4\pi yp_\kappa'\right].
\end{aligned}
\end{equation}
Substituting \eqref{eq:5-added-004}, \eqref{eq:5-added-006}--\eqref{eq:5-added-007}, and \eqref{eq:5-added-009} yields
\begin{equation}\label{eq:5-added-010}
\begin{aligned}
&\left(\partial_\kappa\mu_\kappa+y\mu_\kappa'v_\kappa\right)'\\
=&2\mu_\kappa'\partial_\kappa\lambda_\kappa
+e^{2\lambda_\kappa}
\left(\frac{\partial_\kappa m_\kappa}{y^2}
+4\pi y\partial_\kappa p_\kappa\right)+\mu_\kappa'(yv_\kappa)'+y\mu_\kappa''v_\kappa\\
=&-2y\mu_\kappa'(\lambda_\kappa'+\mu_\kappa')v_\kappa
-4\pi ye^{2\lambda_\kappa}(\rho_\kappa+p_\kappa)v_\kappa-4\pi ye^{2\lambda_\kappa}
P'(\rho_\kappa)(\rho_\kappa+p_\kappa)\mathcal D_\kappa v_\kappa
-4\pi y^2e^{2\lambda_\kappa}p_\kappa'v_\kappa\\
&+\mu_\kappa'(yv_\kappa)'
+2y\lambda_\kappa'\mu_\kappa'v_\kappa+ye^{2\lambda_\kappa}
\left[4\pi(\rho_\kappa+p_\kappa)
-\frac{2m_\kappa}{y^3}+4\pi yp_\kappa'\right]v_\kappa\\
=&\mu_\kappa'(yv_\kappa)'
-2y(\mu_\kappa')^2v_\kappa
-\frac{2e^{2\lambda_\kappa}m_\kappa}{y^2}v_\kappa-4\pi ye^{2\lambda_\kappa}
P'(\rho_\kappa)(\rho_\kappa+p_\kappa)\mathcal D_\kappa v_\kappa\\
=&y\mu_\kappa'v_\kappa'
+\left[\mu_\kappa'-2y(\mu_\kappa')^2
-\frac{2e^{2\lambda_\kappa}m_\kappa}{y^2}\right]v_\kappa-(\lambda_\kappa'+\mu_\kappa')
P'(\rho_\kappa)\mathcal D_\kappa v_\kappa.
\end{aligned}
\end{equation}
The coefficient of $v_\kappa$ agrees with the potential in \eqref{eq:3-added-019}
\begin{equation}\label{eq:5-added-011}
\begin{aligned}
\mu_\kappa'-2y(\mu_\kappa')^2
-\frac{2e^{2\lambda_\kappa}m_\kappa}{y^2}
&=8\pi yp_\kappa e^{2\lambda_\kappa}
-(2y\mu_\kappa'+1)\mu_\kappa'\\
&=8\pi yp_\kappa e^{2\lambda_\kappa}
-e^{2\lambda_\kappa}\frac{2y\mu_\kappa'+1}{y^2}
(m_\kappa+4\pi p_\kappa y^3)\\
&=V_\kappa.
\end{aligned}
\end{equation}
Combining \eqref{eq:5-added-008} with \eqref{eq:5-added-010}--\eqref{eq:5-added-011} and the operator definition \eqref{eq:3-26}, we obtain
\begin{equation}\label{eq:5-added-012}
\begin{aligned}
0
=&-\left(P'(\rho_\kappa)\mathcal D_\kappa v_\kappa\right)'
-(\lambda_\kappa'+\mu_\kappa')
P'(\rho_\kappa)\mathcal D_\kappa v_\kappa+y\mu_\kappa'v_\kappa'+V_\kappa v_\kappa\\
={}&-e^{-\lambda_\kappa-\mu_\kappa}
\left(e^{\lambda_\kappa+\mu_\kappa}
P'(\rho_\kappa)\mathcal D_\kappa v_\kappa\right)'+y\mu_\kappa'v_\kappa'+V_\kappa v_\kappa=L_\kappa v_\kappa.
\end{aligned}
\end{equation}
Finally, $\kappa=Q(\rho_\kappa(0))$ and the definition \eqref{eq:2-added-001} give
\begin{equation}\label{eq:5-added-013}
\begin{aligned}
1
&=Q'(\rho_\kappa(0))\partial_\kappa\rho_\kappa(0)
=\frac{P'(\rho_\kappa(0))}{\rho_\kappa(0)+p_\kappa(0)}
\partial_\kappa\rho_\kappa(0),\quad
\partial_\kappa\rho_\kappa(0)
=\frac{\rho_\kappa(0)+p_\kappa(0)}{P'(\rho_\kappa(0))}.
\end{aligned}
\end{equation}
The central expansions and parameter regularity in Lemma \ref{lemma2.10} give
\begin{equation}\label{eq:5-added-014}
\begin{gathered}
\rho_\kappa(y)=\rho_\kappa(0)+O(y^2),\qquad
p_\kappa(y)=p_\kappa(0)+O(y^2),\\
\rho_\kappa'(y)+p_\kappa'(y)=O(y),\qquad
\partial_\kappa\rho_\kappa(y)
=\partial_\kappa\rho_\kappa(0)+O(y^2),\qquad
\partial_\kappa m_\kappa(y)
=\frac{4\pi}{3}\partial_\kappa\rho_\kappa(0)y^3+O(y^5).
\end{gathered}
\end{equation}
Using \eqref{eq:marginalintro} with \eqref{eq:5-added-013}--\eqref{eq:5-added-014}, we obtain
\begin{equation}\label{eq:5-added-015}
\begin{aligned}
v_\kappa(y)
&=-\frac{\frac{4\pi}{3}\partial_\kappa\rho_\kappa(0)y^3+O(y^5)}
{4\pi y^3[\rho_\kappa(0)+p_\kappa(0)+O(y^2)]}=-\frac{\partial_\kappa\rho_\kappa(0)}
{3[\rho_\kappa(0)+p_\kappa(0)]}+O(y^2)=-\frac{1}{3P'(\rho_\kappa(0))}+O(y^2),
\end{aligned}
\end{equation}
and using the differentiated mass identity \eqref{eq:5-added-005}, we have
\begin{equation}\label{eq:5-added-016}
\begin{aligned}
yv_\kappa'(y)
&=-\frac{\partial_\kappa\rho_\kappa(y)}
{\rho_\kappa(y)+p_\kappa(y)}
-3v_\kappa(y)
-y\frac{\rho_\kappa'(y)+p_\kappa'(y)}
{\rho_\kappa(y)+p_\kappa(y)}v_\kappa(y)\\
&=-\frac{\partial_\kappa\rho_\kappa(0)}
{\rho_\kappa(0)+p_\kappa(0)}
+\frac{\partial_\kappa\rho_\kappa(0)}
{\rho_\kappa(0)+p_\kappa(0)}+O(y^2)\\
&=O(y^2).
\end{aligned}
\end{equation}
Consequently,
\begin{equation}\label{eq:5-added-017}
v_\kappa(0)=-\frac{1}{3P'(\rho_\kappa(0))}\ne0,
\qquad v_\kappa'(y)=O(y),
\end{equation}
which proves \eqref{eq:vcenter} and the claimed regularity.
\end{proof}

\begin{lemma}\label{lemma5.3}
The marginal displacement satisfies
\begin{equation}\label{eq:5-added-018}
\begin{gathered}
R_\kappa v_\kappa(R_\kappa)
=\partial_\kappa R_\kappa
-\frac{\partial_\kappa M_\kappa}{4\pi R_\kappa^2\rho_0},\\
\mathcal D_\kappa v_\kappa(R_\kappa)
=-\frac{M_\kappa\partial_\kappa M_\kappa}
{4\pi P'(\rho_0)\rho_0 R_\kappa^3(R_\kappa-2M_\kappa)}.
\end{gathered}
\end{equation}
For any $\chi\in H^1$,
\begin{equation}\label{eq:5-added-019}
\langle L_\kappa v_\kappa,\chi\rangle_{\Lambda_\kappa}
=-b_\kappa\partial_\kappa M_\kappa\,\chi(R_\kappa),
\qquad
b_\kappa=\frac{\Lambda_\kappa(R_\kappa)M_\kappa}
{4\pi\rho_0 R_\kappa^3(R_\kappa-2M_\kappa)}>0.
\end{equation}
Moreover,
\begin{equation}\label{eq:5-added-020}
\ker L_\kappa=
\begin{cases}
\{0\},&\partial_\kappa M_\kappa\ne0,\\
\operatorname{span}\{v_\kappa\},&\partial_\kappa M_\kappa=0,
\end{cases}
\qquad
\partial_\kappa M_\kappa=0\ \Longrightarrow\ \partial_\kappa R_\kappa\ne0.
\end{equation}
\end{lemma}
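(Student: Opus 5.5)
The plan is to read off the two surface values from the parameter derivatives of the boundary identities, then obtain the bilinear identity by redoing the integration by parts behind \eqref{eq:4-1} without imposing \eqref{eq:3-27}, and finally combine ODE uniqueness with these boundary values to identify the kernel. Throughout, surface values of $\partial_\kappa$-derivatives are one-sided limits from the fluid, as in Lemma \ref{lemma2.10}.

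\emph{Surface values.} Since $p_\kappa(R_\kappa)=0$ and $\rho_\kappa(R_\kappa)=\rho_0$, definition \eqref{eq:marginalintro} gives $R_\kappa v_\kappa(R_\kappa)=-\partial_\kappa m_\kappa(R_\kappa)/(4\pi R_\kappa^2\rho_0)$. Substituting \eqref{eq:5-added-002}, that is $\partial_\kappa m_\kappa(R_\kappa)=\partial_\kappa M_\kappa-4\pi R_\kappa^2\rho_0\partial_\kappa R_\kappa$, yields the first identity in \eqref{eq:5-added-018}.

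For $\mathcal D_\kappa v_\kappa(R_\kappa)$, I use the second line of \eqref{eq:5-added-006} evaluated at $y=R_\kappa$:
\[
\partial_\kappa p_\kappa+R_\kappa p_\kappa'v_\kappa=-P'(\rho_0)\rho_0\,\mathcal D_\kappa v_\kappa(R_\kappa).
\]
Differentiating $p_\kappa(R_\kappa)\equiv0$ in $\kappa$ gives $\partial_\kappa p_\kappa(R_\kappa)=-p_\kappa'(R_\kappa)\partial_\kappa R_\kappa$. Hence the left side equals $p_\kappa'(R_\kappa)\bigl(R_\kappa v_\kappa(R_\kappa)-\partial_\kappa R_\kappa\bigr)=-p_\kappa'(R_\kappa)\partial_\kappa M_\kappa/(4\pi R_\kappa^2\rho_0)$. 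At the surface, the static Euler equation together with \eqref{eq:2-added-052} gives
\[
p_\kappa'(R_\kappa)=-\rho_0\mu_\kappa'(R_\kappa)=-\rho_0 M_\kappa/\bigl(R_\kappa(R_\kappa-2M_\kappa)\bigr).
\]
Solving for $\mathcal D_\kappa v_\kappa(R_\kappa)$ produces the second identity in \eqref{eq:5-added-018}.

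\emph{Bilinear identity.} I repeat the computation \eqref{eq:4-added-009}--\eqref{eq:4-1} in polarized form, with $v_\kappa$ in the first slot and an arbitrary $\chi\in H^1$ in the second. The only change from the earlier derivation is that the boundary term $[\Lambda_\kappa P'(\rho_\kappa)\mathcal D_\kappa v_\kappa\,\chi]_0^{R_\kappa}$ from \eqref{eq:4-added-010} is no longer discarded. Its value at the center vanishes because $\Lambda_\kappa=O(y^3)$ and $v_\kappa$ is regular, by Lemma \ref{lem:marginal}. This gives
\[
\langle L_\kappa v_\kappa,\chi\rangle_{\Lambda_\kappa}=\int_0^{R_\kappa}\Lambda_\kappa (L_\kappa v_\kappa)\chi\,\dd y+\Lambda_\kappa(R_\kappa)P'(\rho_0)\mathcal D_\kappa v_\kappa(R_\kappa)\chi(R_\kappa),
\]
where the left side means the symmetric bilinear form of \eqref{eq:4-1}. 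The interior integral vanishes by Lemma \ref{lem:marginal}. Inserting the surface value of $\mathcal D_\kappa v_\kappa(R_\kappa)$, the factor $P'(\rho_0)$ cancels and we obtain \eqref{eq:5-added-019}. Positivity of $b_\kappa$ follows from $M_\kappa>0$ and $R_\kappa>2M_\kappa$.

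\emph{Kernel.} Any $\chi\in\ker L_\kappa$ is a regular solution of the interior ODE $-A_\kappa\chi''+B_\kappa\chi'+C_\kappa\chi=0$ satisfying \eqref{eq:3-27}. Near $y=0$ we have $A_\kappa=P'(\rho_\kappa)y$ and $B_\kappa=-4P'(\rho_\kappa)+O(y^2)$, so $0$ is a regular singular point with indicial roots $0$ and $-3$. By Frobenius, the $H^1$-regular solutions (which exclude the root $-3$ branch) form a one-dimensional space. Since $v_\kappa(0)\neq0$ by \eqref{eq:vcenter}, this space is spanned by $v_\kappa$, so $\chi=cv_\kappa$. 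The boundary condition then forces $c\,\mathcal D_\kappa v_\kappa(R_\kappa)=0$, and by the formula above this happens with $c\neq0$ exactly when $\partial_\kappa M_\kappa=0$.

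Finally, suppose $\partial_\kappa M_\kappa=0$ and also $\partial_\kappa R_\kappa=0$. Then the first identity gives $v_\kappa(R_\kappa)=0$, and $\mathcal D_\kappa v_\kappa(R_\kappa)=0$ then gives $v_\kappa'(R_\kappa)=0$. The point $R_\kappa$ is regular for the ODE, since $A_\kappa(R_\kappa)=P'(\rho_0)R_\kappa>0$. Uniqueness for the Cauchy problem therefore gives $v_\kappa\equiv0$, contradicting $v_\kappa(0)\neq0$.

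The main obstacle is the Frobenius step at the center: it needs enough regularity of the coefficients ($C_\kappa$ bounded, $B_\kappa+4P'$ of order $y$) and a proof that the second indicial branch behaves like $y^{-3}$ and is not in $H^1$ with the weight $y^4$. I would handle this by rewriting the equation as the 5-dimensional radial problem, as in Lemma \ref{lem:lowest-eigenvalue}, using the bounds \eqref{eq:4-3}.
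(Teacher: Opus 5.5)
Your proposal is correct and follows the paper's proof in all essentials. It uses the same differentiated surface identities: you take the pressure line of \eqref{eq:5-added-006} with $p_\kappa(R_\kappa)\equiv0$, where the paper takes the density line with $\rho_\kappa(R_\kappa)\equiv\rho_0$, and the two are equivalent. It also uses the same polarized integration by parts keeping the surface term, and the same Cauchy-uniqueness argument at $R_\kappa$. For the center step you flagged as the main obstacle, the paper avoids Frobenius theory by noting that the Wronskian $\Lambda_\kappa P'(\rho_\kappa)y(\chi'v_\kappa-\chi v_\kappa')$ is constant by \eqref{eq:4-added-005}, so a nonzero constant forces $\chi\sim y^{-3}$ and hence $\int_0^{R_\kappa}W_\kappa\chi^2\,\dd y=\infty$; this needs only the bounds \eqref{eq:4-3} and $v_\kappa(0)\neq0$, not analytic coefficients.
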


\begin{proof}
Differentiating the two composite surface identities gives
\begin{equation}\label{eq:5-added-021}
\begin{aligned}
\partial_\kappa M_\kappa
&=\partial_\kappa[m_\kappa(R_\kappa)]
=\left.\partial_\kappa m_\kappa(y)\right|_{y=R_\kappa}
+4\pi R_\kappa^2\rho_0\partial_\kappa R_\kappa,\\
0
&=\partial_\kappa[\rho_\kappa(R_\kappa)]
=\left.\partial_\kappa\rho_\kappa(y)\right|_{y=R_\kappa}
+\rho_\kappa'(R_\kappa)\partial_\kappa R_\kappa.
\end{aligned}
\end{equation}
Combining \eqref{eq:5-added-021} with \eqref{eq:marginalintro} and \eqref{eq:5-added-006}, we obtain
\begin{equation}\label{eq:5-added-022}
\begin{aligned}
R_\kappa v_\kappa(R_\kappa)
&=-\frac{\left.\partial_\kappa m_\kappa(y)\right|_{y=R_\kappa}}
{4\pi R_\kappa^2\rho_0}
=\partial_\kappa R_\kappa
-\frac{\partial_\kappa M_\kappa}{4\pi R_\kappa^2\rho_0},\\
-\rho_0 \mathcal D_\kappa v_\kappa(R_\kappa)
&=\left.\partial_\kappa\rho_\kappa(y)\right|_{y=R_\kappa}
+R_\kappa\rho_\kappa'(R_\kappa)v_\kappa(R_\kappa)\\
&=\rho_\kappa'(R_\kappa)
[R_\kappa v_\kappa(R_\kappa)-\partial_\kappa R_\kappa]
=-\frac{\rho_\kappa'(R_\kappa)\partial_\kappa M_\kappa}
{4\pi R_\kappa^2\rho_0}.
\end{aligned}
\end{equation}
The static equations \eqref{eq:1-21} and \eqref{eq:1-23}, evaluated at $R_\kappa$, give
\begin{equation}\label{eq:5-added-023}
\rho_\kappa'(R_\kappa)
=-\frac{\rho_0\mu_\kappa'(R_\kappa)}{P'(\rho_0)}
=-\frac{\rho_0 M_\kappa}
{P'(\rho_0)R_\kappa(R_\kappa-2M_\kappa)},
\end{equation}
so substitution into \eqref{eq:5-added-022} gives
\begin{equation}\label{eq:5-added-024}
\mathcal D_\kappa v_\kappa(R_\kappa)
=\frac{\rho_\kappa'(R_\kappa)\partial_\kappa M_\kappa}
{4\pi R_\kappa^2\rho_0^2}
=-\frac{M_\kappa\partial_\kappa M_\kappa}
{4\pi P'(\rho_0)\rho_0 R_\kappa^3(R_\kappa-2M_\kappa)}.
\end{equation}
For the weak identity, recall the integrating-factor relation \eqref{eq:4-added-004} and the
definition \eqref{eq:3-added-016} of $\mathcal D_\kappa$
\begin{equation}\label{eq:5-added-025}
\frac{\Lambda_\kappa'}{\Lambda_\kappa}
=\frac{3}{y}+\lambda_\kappa'-\frac{\mu_\kappa'}{P'(\rho_\kappa)},
\qquad
\mu_\kappa'\mathcal D_\kappa v_\kappa
-(3-y\mu_\kappa')\mu_\kappa'v_\kappa
=y\mu_\kappa'v_\kappa'.
\end{equation}
For smooth radial $\chi$, \eqref{eq:4-1} and integration by parts using \eqref{eq:5-added-025} yield
\begin{equation}\label{eq:5-added-026}
\begin{aligned}
\langle L_\kappa v_\kappa,\chi\rangle_{\Lambda_\kappa}
=&\int_0^{R_\kappa}\frac{\Lambda_\kappa P'(\rho_\kappa)}{y}
\mathcal D_\kappa v_\kappa\,[y\chi'+(3-y\mu_\kappa')\chi]\,dy+\int_0^{R_\kappa}\Lambda_\kappa
[V_\kappa-(3-y\mu_\kappa')\mu_\kappa']v_\kappa\chi\,dy\\
=&[\Lambda_\kappa P'(\rho_\kappa)\mathcal D_\kappa v_\kappa\,\chi]_0^{R_\kappa}\\
&+\int_0^{R_\kappa}\Lambda_\kappa
\Bigl[-(P'(\rho_\kappa)\mathcal D_\kappa v_\kappa)'
-(\lambda_\kappa'+\mu_\kappa')P'(\rho_\kappa)\mathcal D_\kappa v_\kappa+\mu_\kappa'\mathcal D_\kappa v_\kappa
+[V_\kappa-(3-y\mu_\kappa')\mu_\kappa']v_\kappa\Bigr]\chi\,dy\\
={}&[\Lambda_\kappa P'(\rho_\kappa)\mathcal D_\kappa v_\kappa\,\chi]_0^{R_\kappa}
+\int_0^{R_\kappa}\Lambda_\kappa L_\kappa v_\kappa\,\chi\,dy\\
={}&\Lambda_\kappa(R_\kappa)P'(\rho_0)
\mathcal D_\kappa v_\kappa(R_\kappa)\chi(R_\kappa)\\
=&-\frac{\Lambda_\kappa(R_\kappa)M_\kappa}
{4\pi\rho_0R_\kappa^3(R_\kappa-2M_\kappa)}
\partial_\kappa M_\kappa\,\chi(R_\kappa)
=-b_\kappa\partial_\kappa M_\kappa\,\chi(R_\kappa).
\end{aligned}
\end{equation}
Here the center term vanishes since $\Lambda_\kappa=O(y^3)$ and
$v_\kappa$, $\mathcal D_\kappa v_\kappa$, and $\chi$ are bounded there. If $\partial_\kappa M_\kappa=0$, \eqref{eq:5-added-019} and \eqref{eq:vcenter} give
$v_\kappa\in\ker L_\kappa$.
Conversely, take $\chi\in\ker L_\kappa$. In the interior,
$L_\kappa\chi=L_\kappa v_\kappa=0$ by \eqref{eq:5-added-012}, and the symmetric differential expression \eqref{eq:4-added-005} gives
\begin{equation}\label{eq:5-added-027}
\left[\Lambda_\kappa P'(\rho_\kappa)y
(\chi'v_\kappa-\chi v_\kappa')\right]'
=\Lambda_\kappa(\chi L_\kappa v_\kappa-v_\kappa L_\kappa\chi)=0.
\end{equation}
The bracket is constant. By \eqref{eq:4-added-003} and \eqref{eq:5-added-015}, near the center,
\begin{equation}\label{eq:5-added-028}
\Lambda_\kappa P'(\rho_\kappa)y\asymp y^4,
\qquad v_\kappa(y)=v_\kappa(0)+O(y^2),\qquad v_\kappa(0)\ne0.
\end{equation}
If that constant were nonzero, then
$(\chi/v_\kappa)'$ would have order $y^{-4}$ and $|\chi(y)|\asymp y^{-3}$, giving, by \eqref{eq:4-added-014},
\begin{equation}\label{eq:5-added-029}
W_\kappa(y)|\chi(y)|^2\asymp y^{-2}\quad(y\to0),
\qquad
\int_0^{R_\kappa}W_\kappa|\chi|^2\,dy=\infty,
\end{equation}
contrary to the operator domain.
Thus the constant is zero, so $\chi$ is proportional to $v_\kappa$ near the center.
Uniqueness for the second-order equation extends this proportionality to the
whole interval.
If $\chi\ne0$, the Robin boundary condition \eqref{eq:3-27} gives $\mathcal D_\kappa v_\kappa(R_\kappa)=0$,
and the surface formula \eqref{eq:5-added-024} forces $\partial_\kappa M_\kappa=0$.
This proves the kernel statement in \eqref{eq:5-added-020}.

Finally, if $\partial_\kappa M_\kappa=\partial_\kappa R_\kappa=0$, the surface identities \eqref{eq:5-added-018} give
\begin{equation}\label{eq:5-added-030}
v_\kappa(R_\kappa)=\mathcal D_\kappa v_\kappa(R_\kappa)=0,
\qquad
R_\kappa v_\kappa'(R_\kappa)
=\mathcal D_\kappa v_\kappa(R_\kappa)
-(3-R_\kappa\mu_\kappa'(R_\kappa))v_\kappa(R_\kappa)=0.
\end{equation}
Since the equation is regular at $R_\kappa>0$, these zero terminal data imply
$v_\kappa\equiv0$, contradicting \eqref{eq:vcenter}.
\end{proof}

\begin{remark}
The distinction between the interior equation and the surface condition is
\begin{equation}\label{eq:5-added-031}
L_\kappa v_\kappa=0\quad(0<y<R_\kappa),\qquad
v_\kappa\in\operatorname{dom}L_\kappa 
\ \Longleftrightarrow\ \partial_\kappa M_\kappa=0.
\end{equation}
For a noncritical parameter, the weak residual is the surface functional
$\chi\mapsto-b_\kappa\partial_\kappa M_\kappa\,\chi(R_\kappa)$.
\end{remark}

\subsection{The crossing eigenvalue and the local count}

For each $\kappa$, list the eigenvalues of $L_\kappa$ in nondecreasing
order, counting multiplicities:
\begin{equation}\label{eq:5-added-032}
\nu_1(\kappa)\le\nu_2(\kappa)\le\cdots.
\end{equation}
We write
\begin{equation}\label{eq:5-added-033}
n^u(\kappa)=\#\{j\ge1:\nu_j(\kappa)<0\}
\end{equation}
for the number of strictly negative eigenvalues, equivalently the number
of growing modes. Here $\#$ denotes the number of elements, and
the superscript $u$ stands for unstable. Zero eigenvalues are not counted.
At a mass critical parameter $\kappa_0$, Lemma \ref{lemma5.3} gives a simple zero
eigenvalue. Exactly $n^u(\kappa_0)$ eigenvalues lie below it, so its index
is $n^u(\kappa_0)+1$. In Proposition~\ref{prop:local-eigenvalue},
$\nu_*(\kappa)=\nu_{n^u(\kappa_0)+1}(\kappa)$ denotes the continuation
of this eigenvalue and the index is fixed at $\kappa_0$, even when
$n^u(\kappa)$ changes.


\begin{proposition}
\label{prop:local-eigenvalue}
Let $\kappa_0$ be a nondegenerate mass critical parameter, that is,
\begin{equation}\label{eq:5-added-034}
\left.\partial_\kappa M_\kappa\right|_{\kappa=\kappa_0}=0,
\qquad
\left.\partial_\kappa^2M_\kappa\right|_{\kappa=\kappa_0}\ne0.
\end{equation}
For $\kappa$ sufficiently close to $\kappa_0$, the eigenvalue
$\nu_*(\kappa)=\nu_{n^u(\kappa_0)+1}(\kappa)$ is simple, vanishes only
at $\kappa_0$, and crosses zero there. All other eigenvalues keep their signs.

A corresponding real eigenfunction $\chi_\kappa$ can be chosen continuously
in the form domain after rescaling the radial interval to $[0,1]$, with
$\chi_{\kappa_0}=v_{\kappa_0}$. For this choice,
\begin{equation}\label{eq:5-added-035}
\nu_*(\kappa)
=-\frac{
b_\kappa\,\partial_\kappa M_\kappa\,\chi_\kappa(R_\kappa)
}{
\displaystyle\int_0^{R_\kappa}W_\kappa\chi_\kappa v_\kappa\,dy
}.
\end{equation}
The denominator is positive, and $\chi_\kappa(R_\kappa)$ has the same sign
as $\partial_\kappa R_\kappa$. Moreover,
\begin{equation}\label{eq:5-added-036}
\nu_*'(\kappa_0)
=-\frac{
b_{\kappa_0}
\left.\partial_\kappa^2M_\kappa\right|_{\kappa=\kappa_0}
\left.\partial_\kappa R_\kappa\right|_{\kappa=\kappa_0}
}{
\displaystyle R_{\kappa_0}
\int_0^{R_{\kappa_0}}W_{\kappa_0}v_{\kappa_0}^{\,2}\,dy
}\ne0.
\end{equation}
\end{proposition}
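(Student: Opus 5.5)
Working in the rescaled variable $x=y/R_\kappa\in[0,1]$, I would treat $L_\kappa$ with the Robin condition as a continuous family of self-adjoint operators. By Lemma \ref{lemma2.10}, the coefficients of the quadratic form \eqref{eq:4-1} and the weight $W_\kappa$ depend jointly continuously (indeed $C^1$) on $(\kappa,x)$. The form is bounded below and has compact resolvent, by the same five-dimensional $H^1$ argument as in Lemma \ref{lem:lowest-eigenvalue}, now with the boundary term $\Lambda_\kappa(R_\kappa)P'(\rho_0)d_\kappa(R_\kappa)\chi(R_\kappa)^2$ retained as in \eqref{eq:4-added-032}.

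Standard analytic perturbation theory for this type of form then gives continuity of each $\nu_j(\kappa)$. By Lemma \ref{lemma5.3}, $0$ is a simple eigenvalue at $\kappa_0$ with eigenfunction $v_{\kappa_0}$, and it is not an eigenvalue for nearby $\kappa\ne\kappa_0$. The nondegeneracy condition forces $\partial_\kappa M_\kappa\ne0$ on a punctured neighbourhood, so the kernel is trivial there. Hence in a small neighbourhood the eigenvalue $\nu_*(\kappa)$ stays simple and isolated, all other eigenvalues stay at a fixed positive distance from $0$, and they keep their signs. The Riesz projection onto the isolated eigenvalue then yields a continuous (in fact $C^1$) real eigenfunction $\chi_\kappa$, normalized so that $\chi_{\kappa_0}=v_{\kappa_0}$.

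For \eqref{eq:5-added-035}, I would test the weak identity \eqref{eq:5-added-019} against $\chi=\chi_\kappa$ and use symmetry of the form, with no boundary term since $\chi_\kappa$ satisfies the Robin condition:
\[
\langle L_\kappa v_\kappa,\chi_\kappa\rangle_{\Lambda_\kappa}=\langle v_\kappa,L_\kappa\chi_\kappa\rangle_{\Lambda_\kappa}=\nu_*(\kappa)\int_0^{R_\kappa}W_\kappa v_\kappa\chi_\kappa\,dy .
\]
Justifying the symmetry requires the bilinear version of \eqref{eq:4-1}. This holds because $v_\kappa$ is regular and solves $L_\kappa v_\kappa=0$ in the interior, so the boundary defect appears only through $\mathcal D_\kappa v_\kappa(R_\kappa)$, which is exactly what \eqref{eq:5-added-026} encodes.

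The two sign claims follow by continuity. The denominator tends to $\int W_{\kappa_0}v_{\kappa_0}^2>0$, so it is positive nearby. For the surface value, \eqref{eq:5-added-018} at $\kappa_0$ gives $v_{\kappa_0}(R_{\kappa_0})=\partial_\kappa R_\kappa|_{\kappa_0}/R_{\kappa_0}$, which is nonzero by \eqref{eq:5-added-020}. Continuity of the trace at $x=1$ in the form domain then gives $\operatorname{sgn}\chi_\kappa(R_\kappa)=\operatorname{sgn}\partial_\kappa R_\kappa$ nearby.

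Finally, for \eqref{eq:5-added-036}, I would divide \eqref{eq:5-added-035} by $\kappa-\kappa_0$. Since $\partial_\kappa M_\kappa|_{\kappa_0}=0$, we have $\partial_\kappa M_\kappa/(\kappa-\kappa_0)\to\partial^2_\kappa M_\kappa|_{\kappa_0}$. The remaining factors $b_\kappa$, $\chi_\kappa(R_\kappa)$ and the denominator converge to $b_{\kappa_0}$, $\partial_\kappa R_\kappa/R_{\kappa_0}$ and $\int W_{\kappa_0}v_{\kappa_0}^2$ respectively. This gives \eqref{eq:5-added-036}, which is nonzero, so $\nu_*$ changes sign at $\kappa_0$ and vanishes only there.

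\textbf{Main obstacle.} The main obstacle is the perturbation step. The Hilbert space, with its weight and domain, varies with $\kappa$, so the rescaling to $[0,1]$ must be done carefully, including the singular weights at $x=0$. One must then check that the forms $\mathfrak q_\kappa$ converge in the sense of a uniformly sectorial holomorphic/continuous family relative to a fixed reference form. I would bound the coefficient differences by $o(1)$ times the reference form using \eqref{eq:4-added-020} and the trace inequality \eqref{eq:4-8}, and then invoke Kato's theory for continuous families of closed forms to obtain continuity of the spectral projections.
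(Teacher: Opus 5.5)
Your proposal is correct and follows essentially the same route as the paper. Both arguments rescale to $[0,1]$ and get continuity of the eigenvalues (you via Kato/Riesz projections, the paper via min--max), take simplicity and isolation of zero from Lemma \ref{lemma5.3} together with $\partial_\kappa M_\kappa\ne0$ on a punctured neighbourhood, test the weak eigenvalue equation against $v_\kappa$ using \eqref{eq:5-added-019}, obtain the signs by continuity of the weighted pairing and the trace, and derive $\nu_*'(\kappa_0)$ from $\partial_\kappa M_\kappa|_{\kappa_0}=0$. Your difference-quotient step is a slightly cleaner justification of the paper's ``differentiating \eqref{eq:5-added-035}'', since it needs only continuity, not differentiability, of $\chi_\kappa(R_\kappa)$ and of the denominator.
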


\begin{proof}
By \eqref{eq:5-added-020}, zero is a simple eigenvalue at $\kappa_0$, with eigenfunction
$v_{\kappa_0}$. After rescaling to $[0,1]$, the forms have a fixed domain,
continuous coefficients, uniform form estimates, and a compact embedding
into the weighted $L^2$ space. The min--max principle gives continuous
eigenvalues. Thus the first $n^u(\kappa_0)$ eigenvalues remain negative,
those after $\nu_{n^u(\kappa_0)+1}(\kappa)$ remain positive, and only
$\nu_*(\kappa)$ can change sign. Simplicity allows us to choose $\chi_\kappa$ continuously in the rescaled
form domain, with $\chi_{\kappa_0}=v_{\kappa_0}$. The boundary values are
also continuous, so
\begin{equation}\label{eq:5-added-037}
\int_0^{R_\kappa}W_\kappa\chi_\kappa v_\kappa\,dy
\longrightarrow
\int_0^{R_{\kappa_0}}W_{\kappa_0}v_{\kappa_0}^{\,2}\,dy>0.
\end{equation}
By the surface identity \eqref{eq:5-added-018} and \eqref{eq:5-added-020},
\begin{equation}\label{eq:5-added-038}
\chi_\kappa(R_\kappa)\longrightarrow
v_{\kappa_0}(R_{\kappa_0})
=\frac{
\left.\partial_\kappa R_\kappa\right|_{\kappa=\kappa_0}
}{R_{\kappa_0}}\ne0.
\end{equation}
These limits show that the denominator in \eqref{eq:5-added-035} is positive and $\chi_\kappa(R_\kappa)$ has the same
sign as $\partial_\kappa R_\kappa$ throughout a sufficiently small
neighborhood. Testing the weak eigenvalue equation with $v_\kappa$ and using symmetry
and \eqref{eq:5-added-019} gives
\begin{equation}\label{eq:5-added-039}
\begin{aligned}
\nu_*(\kappa)\int_0^{R_\kappa}W_\kappa\chi_\kappa v_\kappa\,dy
&
=\langle L_\kappa v_\kappa,\chi_\kappa\rangle_{\Lambda_\kappa}=-b_\kappa\,\partial_\kappa M_\kappa\,\chi_\kappa(R_\kappa).
\end{aligned}
\end{equation}
Dividing \eqref{eq:5-added-039} by the positive integral proves \eqref{eq:5-added-035}. Since $\nu_*(\kappa_0)=0$ and
$\left.\partial_\kappa M_\kappa\right|_{\kappa=\kappa_0}=0$, differentiating \eqref{eq:5-added-035} at $\kappa_0$ and using \eqref{eq:5-added-038} gives \eqref{eq:5-added-036}.
Its right-hand side is nonzero by \eqref{eq:5-added-034} and
\eqref{eq:5-added-020}. Therefore $\nu_*(\kappa)$ crosses zero at $\kappa_0$ and has
no other zero sufficiently nearby.
\end{proof}

\begin{corollary}
\label{cor:local-count}
Under the assumptions of Proposition~\ref{prop:local-eigenvalue}, for all
$\kappa$ sufficiently close to $\kappa_0$,
\begin{equation}\label{eq:5-added-040}
n^u(\kappa)=
\begin{cases}
n^u(\kappa_0)+1,
& (\partial_\kappa M_\kappa)(\partial_\kappa R_\kappa)>0,\\[2pt]
n^u(\kappa_0),
& (\partial_\kappa M_\kappa)(\partial_\kappa R_\kappa)\le0.
\end{cases}
\end{equation}
In particular, the counts on the two sides of $\kappa_0$ differ by exactly
one. As $\kappa$ increases through $\kappa_0$, the count increases by one if
\begin{equation}\label{eq:5-added-041}
\left.\partial_\kappa^2M_\kappa\right|_{\kappa=\kappa_0}
\left.\partial_\kappa R_\kappa\right|_{\kappa=\kappa_0}>0,
\end{equation}
and decreases by one if this product is negative.
The count at $\kappa_0$ is the smaller of the two neighboring counts,
since the zero eigenvalue is excluded.
On every interval containing no mass critical parameter, $n^u(\kappa)$
is constant.
\end{corollary}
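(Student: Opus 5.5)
The corollary follows from Proposition~\ref{prop:local-eigenvalue}, so the plan is mostly sign bookkeeping. Fix a nondegenerate mass critical parameter $\kappa_0$.

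By Proposition~\ref{prop:local-eigenvalue}, for $\kappa$ near $\kappa_0$ the following hold:
\begin{itemize}
\item the first $n^u(\kappa_0)$ eigenvalues stay negative;
\item all eigenvalues above $\nu_*(\kappa)$ stay positive;
\item hence $n^u(\kappa)=n^u(\kappa_0)+\mathbf 1_{\{\nu_*(\kappa)<0\}}$.
\end{itemize}
It therefore suffices to determine the sign of $\nu_*(\kappa)$.

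For that I use formula \eqref{eq:5-added-035}. Its denominator is positive, $b_\kappa>0$ by \eqref{eq:5-added-019}, and $\chi_\kappa(R_\kappa)$ has the sign of $\partial_\kappa R_\kappa$. This gives
\[
\operatorname{sgn}\nu_*(\kappa)=-\operatorname{sgn}\bigl[(\partial_\kappa M_\kappa)(\partial_\kappa R_\kappa)\bigr].
\]
Consequently $\nu_*(\kappa)<0$ exactly when the product is positive, and $\nu_*\ge0$ when it is $\le0$. This is \eqref{eq:5-added-040}.

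At $\kappa_0$ itself the product vanishes and $\nu_*(\kappa_0)=0$. Since zero eigenvalues are not counted, the count there is $n^u(\kappa_0)$, which is the smaller value.

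For the crossing direction I use nondegeneracy, $\partial_\kappa^2M_\kappa(\kappa_0)\ne0$. It forces $\partial_\kappa M_\kappa$ to change sign at $\kappa_0$, with $\operatorname{sgn}\partial_\kappa M_\kappa=\operatorname{sgn}\bigl[(\kappa-\kappa_0)\,\partial_\kappa^2M_\kappa(\kappa_0)\bigr]$ nearby. Meanwhile $\partial_\kappa R_\kappa(\kappa_0)\ne0$ by \eqref{eq:5-added-020}, so by continuity (Lemma~\ref{lemma2.10}) $\partial_\kappa R_\kappa$ keeps its sign near $\kappa_0$.

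Hence the product $(\partial_\kappa M_\kappa)(\partial_\kappa R_\kappa)$ has the sign of $(\kappa-\kappa_0)\,\partial_\kappa^2M_\kappa\,\partial_\kappa R_\kappa\big|_{\kappa_0}$. Two cases follow:
\begin{itemize}
\item If $\partial_\kappa^2M_\kappa\,\partial_\kappa R_\kappa>0$ at $\kappa_0$, the product goes from negative to positive and the count increases by one.
\item If it is negative, the count decreases by one.
\end{itemize}
This is consistent with $\nu_*'(\kappa_0)$ in \eqref{eq:5-added-036}.

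For the last claim, take an interval $I$ containing no mass critical parameter. By \eqref{eq:5-added-020}, $\ker L_\kappa=\{0\}$ for every $\kappa\in I$, so no eigenvalue vanishes on $I$. After rescaling to $[0,1]$, the min--max eigenvalues $\nu_j(\kappa)$ depend continuously on $\kappa$; this is the same argument used in Proposition~\ref{prop:local-eigenvalue}, relying on the joint continuity of Lemma~\ref{lemma2.10}. A continuous function with no zeros keeps its sign, so each $\nu_j$ has constant sign on $I$.

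The main point to watch is that only finitely many $\nu_j$ can be negative, uniformly on compact subintervals. I expect to get this from the uniform lower bound of the form, as in the proof of Lemma~\ref{lem:lowest-eigenvalue}, together with compactness: eigenvalues that are large in index are large in value, uniformly in $\kappa$ on compacts. Granted this, $n^u$ is locally constant on $I$, and therefore constant on the connected interval $I$.
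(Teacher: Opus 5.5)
Your proposal is correct and follows the paper's proof. You read off the sign of $\nu_*(\kappa)$ from \eqref{eq:5-added-035}, exclude the zero eigenvalue at $\kappa_0$ by definition, and get constancy away from mass critical parameters from the trivial kernel \eqref{eq:5-added-020} together with min--max continuity. The one deviation is that you obtain the crossing direction from the sign change of $\partial_\kappa M_\kappa$ against the fixed sign of $\partial_\kappa R_\kappa$, rather than from $\nu_*'(\kappa_0)$ in \eqref{eq:5-added-036}; the two are equivalent, and yours needs only the $C^2$ regularity of Lemma~\ref{lemma2.10}.

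The uniform finiteness issue you flag at the end is not needed. Once each $\nu_j$ is continuous and nonvanishing on the connected interval $I$, the index set $\{j:\nu_j(\kappa)<0\}$ does not depend on $\kappa\in I$. Alternatively, the nondecreasing ordering means $\nu_{n+1}(\kappa)>0$ already forces every higher eigenvalue to be positive.
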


\begin{proof}
By Proposition~\ref{prop:local-eigenvalue}, $b_\kappa>0$, the denominator
in \eqref{eq:5-added-035} is positive, and $\chi_\kappa(R_\kappa)$ has
the same sign as $\partial_\kappa R_\kappa$. Thus $\nu_*(\kappa)<0$
precisely when $(\partial_\kappa M_\kappa)(\partial_\kappa R_\kappa)>0$.
All other eigenvalues keep their signs, and the zero eigenvalue at
$\kappa_0$ is excluded by the definition \eqref{eq:5-added-033}. This proves \eqref{eq:5-added-040}.

The derivative formula \eqref{eq:5-added-036} shows that $\nu_*'(\kappa_0)$ has the opposite
sign to
$\left.\partial_\kappa^2M_\kappa\right|_{\kappa=\kappa_0}
 \left.\partial_\kappa R_\kappa\right|_{\kappa=\kappa_0}$.
If this product is positive, $\nu_*(\kappa)$ passes from positive to
negative as $\kappa$ increases, adding one unstable mode. If the product
is negative, the crossing is reversed and one unstable mode is removed.

Finally, \eqref{eq:5-added-020} excludes zero eigenvalues whenever
$\partial_\kappa M_\kappa\ne0$. Continuity then makes $n^u(\kappa)$ locally
constant there, and hence constant on every interval containing no mass
critical parameter.
\end{proof}

\section*{Acknowledgments}
This work was supported by NSFC grant 124B2014. As a non-native English speaker, the author used ChatGPT to assist with the language and presentation of mathematics. All outputs were reviewed and confirmed by the
author before inclusion.

\phantomsection

\bigskip
	
	\centerline{\scshape Zeming Hao}
	\smallskip
	{\footnotesize
		\centerline{School of Mathematics Sciences, Peking University}
		\centerline{Beijing 100871, China}
		\centerline{\email{2606300009@pku.edu.cn}}
	}
	
	\medskip
	
	\centerline{\scshape Jianchao Xu}
	\smallskip
	{\footnotesize
		\centerline{School of Mathematics and Statistics, Wuhan University}
		\centerline{Wuhan, Hubei 430072, China}
		\centerline{\email{2019302010070@whu.edu.cn}}
	}
	
	\medskip

\end{document}